\newcommand{\mathbbm}{}
\newtheorem{theorem}{Theorem}[section]
\newtheorem{corollary}[theorem]{Corollary}
\newtheorem{proposition}[theorem]{Proposition}
\newtheorem{lemma}[theorem]{Lemma}
\numberwithin{equation}{section}
\theoremstyle{definition}
\theoremstyle{remark}
\newtheorem{remark}[theorem]{Remark}
\DeclareMathOperator\capacity{cap}
\DeclareMathOperator\dist{dist}
\DeclareMathOperator\Cov{Cov}
\DeclareMathOperator\diam{diam}
\def\d{\mathrm{d}}
\newcommand{\vvviiiggg}{\Big}
\newcommand{\m}{\eta}
\newcommand{\M}{L}
\newcommand{\Exp}{\mathop{\mathrm{Exp}}}
\newcommand{\eps}{\varepsilon}
\newcommand{\Z}{\mathbb{Z}}
\newcommand{\R}{\mathbb{R}}
\newcommand{\N}{\mathbb{N}}
\newcommand{\I}{\mathcal{I}}
\newcommand{\eqd}{\stackrel{\tiny d}{=}}
\newcommand{\V}{V}
\newcommand{\IP}{\mathbb{P}}
\newcommand{\IE}{\mathbb{E}}
\newcommand{\GP}{\mathcal{P}}
\newcommand{\1}[1]{{\mathbbm{1}}_{#1}}
\newcolumntype{e}{>{\displaystyle}r @{\,} >{\displaystyle}c @{\,} >{\displaystyle}l}
  \newcounter{constant}
  \newcounter{iconst}
  \newcommand{\newiconst}[1]{\refstepcounter{iconst}\label{#1}}
\def\clap#1{\hbox to 0pt{\hss#1\hss}}
\def\arraypar#1{\parbox[c]{\textwidth - 2cm}{\centering #1}}
\def\urltilda{\kern -.15em\lower .7ex\hbox{\~{}}\kern .04em}
\begin{document}

% \fontsize{12}{14}\rm
% \addtolength{\abovedisplayskip}{.5mm}
% \addtolength{\belowdisplayskip}{.5mm}
%\AfterBegin{enumerate}{\addtolength{\itemsep}{2mm}}

\title{\LARGE \usefont{T1}{cmr}{m}{n} \selectfont
  C\lowercase{onditional decoupling of random interlacements}} 

\author{\normalsize \itshape S\lowercase{erguei} P\lowercase{opov} $^1$ \color{white} \tiny and}
\address{$^1$Department of Statistics, Institute of Mathematics,
 Statistics and Scientific Computation, University of Campinas --
UNICAMP, rua S\'ergio Buarque de Holanda 651,
13083--859, Campinas SP, Brazil\newline
e-mail: {\itshape \texttt{popov@ime.unicamp.br}}\newline
website: {\href{http://www.ime.unicamp.br/~popov/}{\itshape \texttt{www.ime.unicamp.br/\urltilda popov/}}}}

\author{\color{black} \normalsize \itshape C\lowercase{aio} A\lowercase{lves} $^2$}
\address{$^2$ Department of Statistics, Institute of Mathematics,
 Statistics and Scientific Computation, University of Campinas --
UNICAMP, rua S\'ergio Buarque de Holanda 651,
13083--859, Campinas SP, Brazil\newline
e-mail: {\itshape \texttt{narrowstreets@gmail.com}}}

\date{\today}

\newiconst{c:main'}
\newiconst{c:main''}
\begin{abstract}

We prove a conditional decoupling inequality for the model of random interlacements in dimension $d\geq 3$: the conditional law of random interlacements on a box (or a ball) $A_1$ given the (not very ``bad'') configuration on a ``distant'' set $A_2$ does not differ a lot from the unconditional law. The main method we use is a suitable modification
of the \emph{soft local time} method of~\cite{PopovTeixeira}, that 
allows dealing with conditional probabilities.

\noindent
\emph{Keywords and phrases.} Random interlacements, 
stochastic domination, soft local time
\newline
MSC 2010 \emph{subject classifications.}
Primary 60K35;
Secondary 60G50, 82C41. 
\end{abstract}

\maketitle

%\newpage

\section{Introduction}
\label{s_intro}
Random interlacements were introduced by Sznitman in~\cite{Szn10}, 
to model the trace of the simple random walk on the discrete torus $\Z_n^d:=\Z^d/n\Z^d$ or the discrete cylinder $\Z\times\Z^{d-1}$, in dimension~$d\geq 3$. Detailed treatments and reviews of recent 
results can be found in the recent books~\cite{CT12, DRS14, Szn12book}. Loosely speaking, the model of random interlacements in~$\Z^d$, $d\geq 3$, 
is a stationary Poissonian soup of bi-infinite simple random walk trajectories on the integer lattice. There is a parameter~$u>0$ 
entering the intensity measure of the Poisson process, the larger 
$u$~is the more trajectories are thrown in. The sites of~$\Z^d$ 
that are not touched by the trajectories constitute the \emph{vacant set}~$\mathcal{V}^u$, and the union of all trajectories constitutes the interlacement set $\I^u=\Z^d\setminus \mathcal{V}^u$. 
The random interlacements are constructed simultaneously for all $u>0$ in such a way that $\I^{u_1}\subset \I^{u_2}$ if $u_1<u_2$. 
In fact, the law of the 
vacant set at level~$u$ can be
uniquely characterized by the following identity: 
\begin{equation}
\label{eq_vacant>3}
 \IP[A\subset \mathcal{V}^u] = \exp\big(-u \capacity (A)\big),
\end{equation}
where $\capacity(A)$ is the \emph{capacity} of a finite 
set~$A\subset\Z^d$. Informally, the capacity measures how 
``big'' is the set  from the point of view of the walk, 
see Section~6.5 of~\cite{LawlerLimic} for formal definitions, or Section~\ref{s_def} below.

The model of random interlacements naturally has more independence 
built in than just one random walk on the torus or the cylinder (because on a fixed set one observes traces of \emph{independent} trajectories). 
Still, the analysis of random interlacements is difficult because of the long-range dependencies present there. For example, in~$(1.68)$ from~\cite{Szn10} we can see that
\begin{equation}
 \label{1-corr}
 \Cov(\1{x \in \mathcal{I}^u}, \1{y \in \mathcal{I}^u}) 
  \sim \frac{c_{d}u}{\|x-y\|^{d-2}} \quad\text{ as } \quad \| x-y \| \to \infty,
\end{equation}
which means that the ``degree of dependence'' decreases polynomially in the distance.

Naturally, one is interested in ``decoupling'' the events supported on distant regions; that is, to argue that they are approximately independent to a certain degree. One possible approach to quantify that degree is the following: given finite sets $A_1,A_2~\subset~\Z^d$ and functions  
$f_1:\{0,1\}^{A_1} \to [0,1]$ and $f_2:\{0,1\}^{A_2} \to [0,1]$ 
depending on the interlacements set intersected with $A_1$ and $A_2$ respectively, we have
\begin{equation}
 \label{basic_dec}
 \Cov_u(f_1,f_2) \leq c_{d} u
  \frac{\capacity(A_1) \capacity(A_2)}{\dist(A_1,A_2)^{d-2}},
\end{equation}
as proved in formula~$(2.15)$ of~\cite{Szn10}, see also $(8.1.1)$ in \cite{DRS14}. However, the polynomial error term in \eqref{basic_dec} can complicate one's life in many applications 
(and, e.g.\ in the case when the diameters of these sets are of the same order as the distance between them, \eqref{basic_dec} is simply of no use); on the other hand, while \eqref{basic_dec} can be improved 
to some degree \cite{BGP}, the error term there should always be at least polynomial, as \eqref{1-corr} shows. To circumvent this difficulty, one first may note that usually the ``interesting'' events/functions 
are  \emph{monotone} (i.e., increasing or decreasing). 
For e.g.\ increasing events, we know that their probabilities increase as the parameter~$u$ increases. Note also that the FKG inequality (see~\cite{Tei09}, Theorem~$3.1$) gives us
\begin{equation}
\label{main_incr}
 \IE^u[g_1 g_2] \geq \IE^{u}[g_1] 
\IE^{u}[g_2],
\end{equation}
for \emph{any} increasing functions $g_{1,2}$ with finite second moments. 
To complement the FKG inequality, we use \emph{sprinkling}, i.e., 
we slightly change the intensity of random interlacements in order to decrease the error term; this approach was used in \cite{Szn10} and \cite{Szn12}. Then, in particular, in \cite{PopovTeixeira} it was proved that 
\begin{equation}
\label{strong_dec}
 \IE^u[f_1 f_2] \leq \IE^{(1+\eps)u}[f_1] 
\IE^{(1+\eps)u}[f_2]
    + c_d(r+s)^d \exp(-c_d ' \eps^2us^{d-2});
\end{equation}
with $f_1:\{0,1\}^{A_1} \to [0,1]$ and $f_2:\{0,1\}^{A_2} \to [0,1]$ both increasing functions in the interlacements set, $r=\min(\diam(A_1),\diam(A_2))$, and $s=\dist(A_1,A_2)$. The same bound was also obtained for decreasing functions.

It is important to observe, however, that the decoupling in the above form may not always be useful for one's needs. Intuitively, one is tempted to understand inequalities like \eqref{basic_dec}  as ``what happens in one set does not influence a lot what happens in the other set''. 
Now, consider the following situation. Suppose that on top of the random interlacements we have some additional stochastic process (e.g., a random walk) that ``explores'' the interlacement set in some way. 
Assume that this process has already explored the interlacements in a given area, revealing a lot of information about it; think, for definiteness, that it simply revealed the interlacement set exactly. 
The probability of a particular configuration of the interlacement set is usually very small; so, \eqref{basic_dec} (even~\eqref{strong_dec}!) will blow up when one divides by that probability, because of the error term.
In fact, in the end of Section~\ref{s_def} we discuss a particular
model of the random walk on the interlacement set, where our 
main results turn out to be useful.

This justifies the need for \emph{conditional} decoupling, i.e., show that, given the configuration on some set, the law of the interlacement configuration on a distant set is still in some sense close to the unconditional law. This is what we are doing in this paper.
To prove our results, the main method we use is a suitable modification
(that allows dealing with conditional probabilities)
of the \emph{soft local time} method of~\cite{PopovTeixeira}.
We hope that this modification will be useful in other 
contexts, for instance, for dealing with the decoupling properties
of the \emph{loop measures} \cite{CS14}.

Another important observation is the following.
 There are strong connections between random interlacements and the Gaussian free field, see e.g. \cite{Szn12book, Szn12ecp}. In particular, there are decoupling inequalities similar to \eqref{basic_dec} and \eqref{strong_dec} for the Gaussian free field as well, see~\cite{PR15}. Notice, however, that the decoupling-with-sprinkling result for the 
Gaussian free field (Theorem~$1.2$ of \cite{PR15}) is \emph{already} conditional (the unconditional decoupling is obtained as a simple consequence, just by integration). On the other 
hand, note that the error terms in the conditional decoupling in the main result of this paper (Theorem~\ref{t_main1}) are much worse than that of \eqref{strong_dec}; related to this is the fact 
that in the conditional setting the minimal distance between sets that permits the result to work is much bigger. A comparison with the situation for the Gaussian free field suggests that, hopefully, there is still much room for improvement for the 
conditional decoupling for random interlacements.

\section{Definitions, notations and results}
\label{s_def}
In this section we will introduce the basic definitions, conventions and notation used in this paper. We will then be able to state our main result. We start by stating our convention regarding constants: 
$c$, $c'$, $c_1$, $c_2$, $c_3$,$\dots$ are always defined as strictly positive constants depending only on the dimension $d$. Constants can also change value from line to line, unless when the text explicitly states to the contrary.

We let $\| \cdot \|$ and $\| \cdot \|_{\infty}$ denote the Euclidean and $\ell_\infty$ norms in~$\Z^d$ respectively. For $x,y\in\Z^d$, we also let $\dist(x,y)\equiv \|x-y \|$. We say that 
two vertices $x,y\in\Z^d$ are neighbors when $\|x-y\|=1$, this notion introduces the usual nearest-neighbor graph structure in $\Z^d$. For~$x\in\Z^d$ and~$r\in\R_+$, we define
\begin{equation*}
B(x,r) :=\big\{ y\in\Z^d;\|y-x\| \leq r       \big\},
\end{equation*}
the discrete ball in the Euclidean norm centered on $x$ with 
radius $r$, and
\begin{equation*}
B_{\infty}(x,r) :=\big\{ y\in\Z^d;\|y-x\|_{\infty} \leq r       \big\},
\end{equation*}
the discrete ball in the $\ell_\infty$-norm centered on~$x$ 
with radius~$r$. Given a set $A\subseteq\Z^d$ we denote by
\begin{equation*}
A^C := \{x\in \Z^d;x\notin A\}
\end{equation*}
its complement and by
\begin{equation*}
\partial A := \big\{x\in A;\text{ there exists $y\in A^C$ 
such that $\|x-y\|=1$}\big\}
\end{equation*}
its (internal) boundary.

For any set $Z$ and any two functions $f,g: Z\mapsto\R$, we write $f(z)\asymp g(z)$ to denote the fact that there exist two strictly
positive constants, $c_1$ and $c_2$, such that $c_1 f(z) \leq g(z) \leq c_2 f(z)$ for all $z\in Z$. 
When $Z$ is equal to $\R$ we say that $f(z)=o(g(z))$ when $\frac{f(z)}{g(z)}$ goes to~$0$ as $z\rightarrow\infty$.

Given~$x\in\Z^d$, we let~$\IP_{x}$ denote the probability measure associated with the simple random walk in $\Z^d$ started at $x$. 
We will also let $(X_k,k\geq 0)$ denote the simple random walk process in $\Z^d$. Given a set $A\subset \Z^d$, we define the entrance 
time for the set $A$
\begin{equation*}
H_{A}:=\inf\big\{  k\geq 0;X_k\in A   \big\}.
\end{equation*}
We also let the hitting time for $A$ be defined as
\begin{equation*}
\tilde H_{A}:=\inf\big\{  k\geq 1;X_k\in A   \big\}.
\end{equation*}
When $A$ is finite we denote its harmonic measure by
\begin{equation*}
e_A(x)=\1{x\in A}\IP_x\big[  \tilde H_A=\infty  \big]\text{  for $x\in\Z^d$}.
\end{equation*}
We are then able to define the capacity of the set $A$ 
\begin{equation*}
\capacity (A):=\sum_{x\in A} e_A(x),
\end{equation*}
and the normalized harmonic measure
\begin{equation*}
\overline{e}_A(x):=e_A(x) \capacity(A)^{-1}.
\end{equation*}
We now write down the definition of the Green's function for the simple random walk in $\Z^d$: for $x,y\in\Z^d$, we let
\begin{equation*}
G(x,y):=\sum_{k\geq 0}\IP_{x}\big[ X_k=y \big].
\end{equation*}
Theorem~$1.5.4$ of~\cite{LawlerI} provides us with the 
following estimate on the Green's function:
\begin{equation}
\label{greenestimate}
G(x,y)\asymp \frac{1}{1+\|x-y\|^{d-2}}.
\end{equation} 

Let us briefly discuss the definition of the measure associated with the random interlacements process intersected with a given finite set $A\subset\Z^d$. Assume we have constructed a probability space where, for every $i\geq 1$, there exists a simple random walk process $(X^{(i)}_k,k\geq 0)$ with starting distribution given by 
$\overline{e}_A(\cdot)$, and such that $(X^{(i)}_k,k\geq 0)$ is independent from~$(X^{(j)}_k,k\geq 0)$ for~$i\neq j$. 
We also assume that in this space we can construct an independent Poisson process~$(J_u)_{u\geq 0}$ on the positive real line with intensity~$\capacity(A)$. The law of the random interlacements process $(\I^u)_{u\geq 0}$ intersected with the set $A$ can then be characterized by
\begin{equation}
\label{interlacementsdef}
(\I^u\cap A)_{u\geq 0} \eqd \Big(A\cap\bigcup_{i\leq J_u}\bigcup_{k\geq 0}X_k^{(i)} \Big)_{u\geq 0},
\end{equation}
as can be seen in \cite{Szn10}, Proposition~$1.3$, or in the paragraph before $(2.6)$ in \cite{CT14}. This definition gives rise to compatible measures in the following sense: 
Given two finite sets~$K_1\subset K_2\subset\Z^d$, we have that $((\I^u\cap K_2)_{u\geq 0})\cap K_1$ has the same law as $(\I^u\cap K_1)_{u\geq 0}$.

To state our main result, we need more definitions.
\begin{figure}[ht]
\centering
\includegraphics[scale = 1]{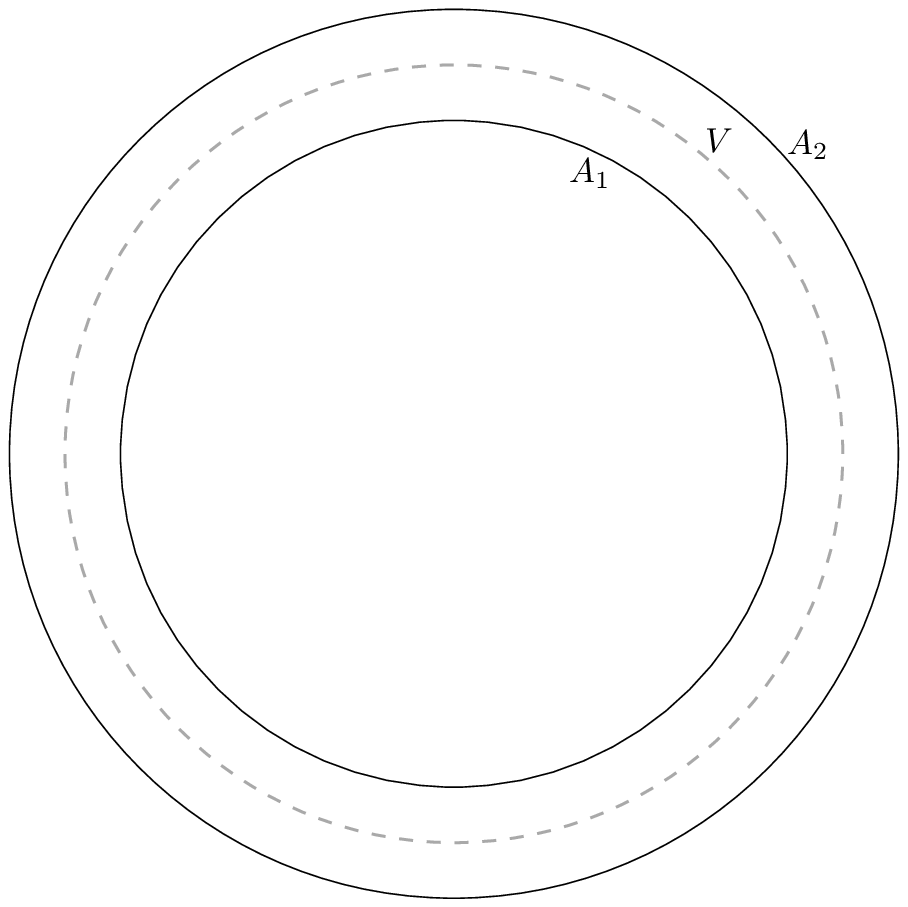}
\vspace{0.5cm}
\caption{Definition of the sets $A_1^{\tiny\Circle}$, $A_2^{\tiny\Circle}$ and $V^{\tiny\Circle}$.}
\label{processdeffig1}
\end{figure}
\begin{figure}[ht]
\centering
\includegraphics[scale = 1]{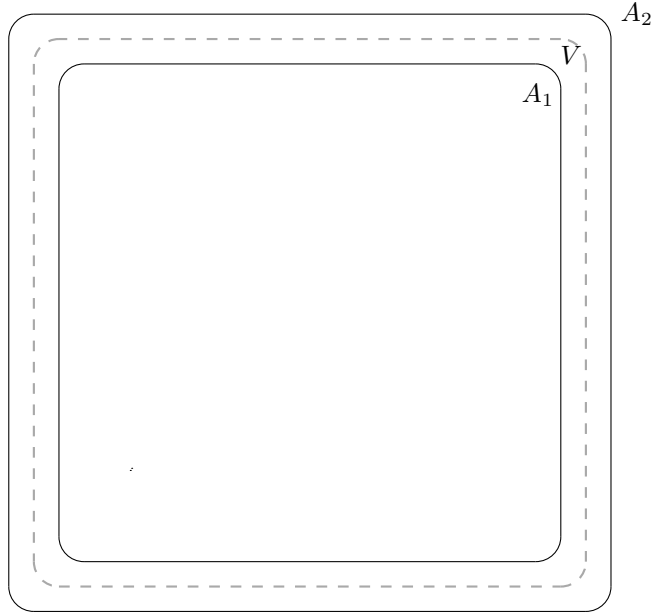}
\vspace{0.5cm}
\caption{Definition of the sets $A_1^{\tiny\Square}$, $A_2^{\tiny\Square}$ and $V^{\tiny\Square}$.}
\label{processdeffig1square}
\end{figure}
Let $r>0$ be sufficiently big, and let $s:=s(r)>0$, 
with $s=o(r)$. We define $A_1^{\tiny\Circle}:=A_1^{\tiny\Circle}(r)$ to be the discrete ball of radius $r$, that is
\begin{equation*}
A_1^{\tiny\Circle}:=\{x_1\in\Z^d;\dist(x_1,0)<r\}.
\end{equation*}
We also define $A_1^{\tiny\Square}:=A_1^{\tiny\Square}(r,s)$ to be a $d$-dimensional discrete `hypercube' with edge length~$r$ and a smoothed frontier such that for every point $x_1\in\partial A_1$ there exists a discrete Euclidean ball~$B_{x_1}$ of 
radius $s$ contained in $A_1$ such that $B_{x_1}\cap A_1^C=x_1$. More precisely, we let $\mathfrak{H}_{r-s}$ be a discrete $d$-dimensional hypercube  with edge length $r-s$ contained in~$\Z^d$ and define 
\begin{equation*}
A_1^{\tiny\Square}:=\{x_1\in\Z^d;\dist(x_1,\mathfrak{H}_{r-s})\leq s\}.
\end{equation*}
We refer the reader to \cite{PopovTeixeira}, Section~$8$, to see that $A_1^{\tiny\Square}$ possesses the desired properties. Note that, since $s=o(r)$, the diameter of $A_1^{\tiny\Square}$ is of order $r$.

We then define $A_2^{\tiny\Circle}:=A_2^{\tiny\Circle}(r,s)$ to be the set of points that are at least at distance $2s$ from $A_1^{\tiny\Circle}$:
\begin{equation*}
A_2^{\tiny\Circle}:=\{x_1\in\Z^d;\dist(x_1,x_2)>2s\text{ for every $x_2\in$}A_1^{\tiny\Circle}\}.
\end{equation*}
We finally define $V^{\tiny\Circle}:=V^{\tiny\Circle}(r,s)$ to be the boundary set
\begin{equation*}
V^{\tiny\Circle}:=\partial\{x_1\in\Z^d,\dist(x_1,x_2)\leq 
s\text{ for some $x_2\in$}A_1^{\tiny\Circle}\},
\end{equation*}
separating $A_1^{\tiny\Circle}$ from $A_2^{\tiny\Circle}$. We analogously define $A_2^{\tiny\Square}(r,s)$ and $V^{\tiny\Square}(r,s)$.  
It will also be useful to define the $d$-dimensional hypercube $\mathfrak{H}_{r+2s}$ of edge length $r+2s$ concentric with~$\mathfrak{H}_{r-s}$,  which will essentially be the unsmoothed version of $(A_2^{\tiny\Square})^C$. 

When there is no risk of confusion, 
or when the arguments presented work for both balls and 
smoothed hypercubes (which will be often so), we will omit the super-indexes~$\tiny\Circle,\tiny\Square$. 

Since $s=o(r)$, we have
\begin{equation*}
\capacity(V)=\capacity(A_2)(1+o(1))=\capacity(A_1)(1+o(1)),
\end{equation*}
and also, by Proposition~$2.2.1$ and equation~$(2.16)$ of  \cite{LawlerI},
\begin{equation}
\label{asymcapv}
\capacity(V)\asymp r^{d-2}.
\end{equation}

We will now state our main result. Heuristically, it says the following: Let~$s$ be bounded from below by a polynomial of~$r$ with a explicit given coefficient (strictly smaller than~$1$, depending only on the dimension~$d$ and whether~$A_1$ is a ball or a smoothed hypercube). Let~$A_3$ be a subset of~$A_2$ with finite boundary, that is, $A_3$ is either finite or has finite complement. If we pay a stretched exponentially small price (in~$s$) to guarantee that the interlacements configuration of~$\I^u\cap A_3$ is not too weird, then the distribution of~$I^u\cap A_1$ conditioned on this configuration is well approximated by the unconditional distribution, with high probability ($1$ minus a stretched exponential function of~$s$).
\begin{figure}
\centering
\includegraphics[scale = 0.75]{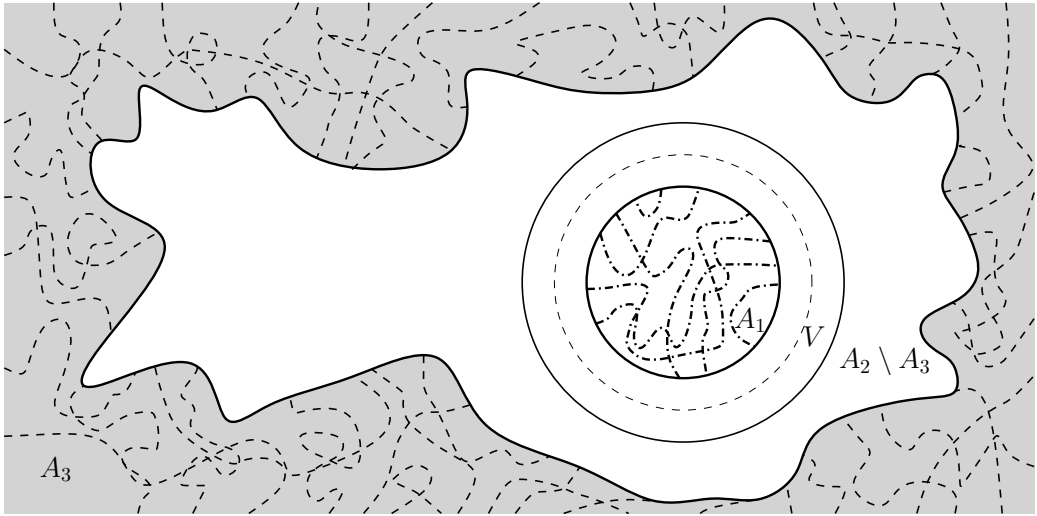}
\vspace{0.5cm}
\caption{Our main result says that if the interlacements configuration in a set~$A_3\subseteq A_2$ is not too weird, that is, it does not belong to a set with stretched exponentially small probability (in~$s$, as~$s\rightarrow\infty$), then with high probability ($1$ minus stretched exponential in~$s$) the distribution of the interlacements set intersected with~$A_1$ conditioned on the state of~$\I^u\cap A_3$ can be well approximated by the unconditional distribution.}
\label{A3def}
\end{figure}
\begin{theorem}
\label{t_main1}
Let the real numbers $b_{A_1^{\tiny\Circle}},b_{A_1^{\tiny\Square}}$ 
be such that
\begin{align}
\label{bcircledef}
1\leq b_{A_1^{\tiny\Circle}}<\frac{2d-2}{d},\\
1\leq b_{A_1^{\tiny\Square}}<\frac{4d-4}{3d-2}\label{bsquaredef}.
\end{align}
Then, define
\begin{align}
\label{acircledef}
a_{A_1^{\tiny\Circle}}&=2d-2-db_{A_1^{\tiny\Circle}}>0,\\
a_{A_1^{\tiny\Square}}&=4d-4-3db_{A_1^{\tiny\Square}}+2b_{A_1^{\tiny\Square}}>0.\label{asquaredef}
\end{align}
From now on we will again omit the indexes $\tiny\Circle,\tiny\Square$. Recall that $r$ is of the same order as the 
diameter of $A_1$, and that $s$ has the same order as the distance between $A_1$ and~$A_2$. Assume $r\asymp s^{b_{A_1}}$, 
let $s$ be sufficiently big. Let $\eps>0$ be smaller then $1/4$. Let~$A_3$ be a subset of~$A_2$ such that~$|\partial A_3|<\infty$. Define $\I^u_{A_j}:=\I^u\cap A_j$, for~$j=1,2,3$.

Then there are positive constants~$c,c'$ depending only 
on the dimension $d$, and a measurable (according to the random interlacements $\sigma$-field) set~$\mathcal{G}\in\{0,1\}^{A_3}$ 
such that
\begin{equation*}
\IP^u \big{[} \I^u_{A_3}\in\mathcal{G}\big{]}\geq 1-\exp\Big(-\frac{c'}{2}\eps^2 u s^{a_{A_1}}\Big),
\end{equation*}
and for any increasing function $f$ on the interlacements set intersected with $A_1$, with $\sup |f| < M$, we have 

\begin{align}
\label{e_conditionaldecoupling2}
\lefteqn{\big(\IE f( \I^{u(1-\eps)}_{A_1})-cM\exp\big(-c'\eps^2 u s^{a_{A_1}}\big)\big)\1{\I^u_{A_3}\in\mathcal{G}} 
\leq  \IE \big(f(\I^{u}_{A_1})\mid \I^{u}_{A_3} \big)\1{\I^u_{A_3}
\in\mathcal{G}}\nonumber} \phantom{**********************}
\\
\phantom{*****************}&\leq 
 \big(\IE f( \I^{u(1+\eps)}_{A_1})
+cM\exp\big(-c'\eps^2 u s^{a_{A_1}}\big)\big)\1{\I^u_{A_3}\in\mathcal{G}}. 
\end{align}

\end{theorem}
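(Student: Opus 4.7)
The plan is to adapt the soft local time method of \cite{PopovTeixeira} to conditional probabilities, with the separating surface $V$ playing the central role. First I would realise $(\I^u\cap A_1,\I^u\cap A_3)$ as a functional of a single Poisson point process $\Pi^u$ on $V\times\R_+$ with intensity proportional to $u\,\bar e_V\otimes\d t$, together with an independent family of random walk excursions indexed by the atoms of $\Pi^u$. Each atom represents the start of an excursion at $V$; the attached excursion is either \emph{inward} (stays on the $A_1$-side of $V$ and contributes only to $\I^u\cap A_1$) or \emph{outward} (goes to the $A_2$-side and contributes only to $\I^u\cap A_3$). The construction should ensure that, conditionally on $\Pi^u$ together with the sequence of inward/outward labels, the two families of excursion bodies are independent, so that all interaction between $\I^u\cap A_1$ and $\I^u\cap A_3$ is mediated through the Poisson atoms on $V$.

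Next I would introduce the good event $\mathcal{G}\subseteq\{0,1\}^{A_3}$ capturing quantitative typicality of the $A_3$-trace. Partition $V$ into mesoscopic cells $(C_k)$ at a scale $\ell$ to be chosen later, and declare $\omega\in\mathcal{G}$ iff, for each $k$, the number of points of $\omega$ that can be attributed (in the sense of the Bayesian posterior on starting cells on $V$) to an outward excursion originating in $C_k$ lies within an $\eps$-window of its expectation. Since conditionally on $\Pi^u$ and on the outward labels these per-cell counts are independent Poissonian quantities, Poisson concentration together with a union bound over the cells and the Green's function estimate \eqref{greenestimate} (used to quantify how sharply $\omega$ pins down the originating cell on $V$) yields $\IP^u[\mathcal{G}^c]\le\exp(-\tfrac12 c'\eps^2 u s^{a_{A_1}})$. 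The exponent $a_{A_1}$ arises from optimising $\ell$ against $\capacity(V)\asymp r^{d-2}=s^{b_{A_1}(d-2)}$; the looser bound \eqref{bsquaredef} for the smoothed cube is forced by the non-uniform behaviour of harmonic measure along its smoothed edges.

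Third, for each fixed $\omega\in\mathcal{G}$ I would compare the conditional law of the inward-excursion process given $\I^u\cap A_3=\omega$ with the unconditional laws at levels $u(1\pm\eps)$. By Bayes' formula, conditioning on $\omega$ tilts the law of $\Pi^u$ by a density depending on the outward part of the process alone. On $\mathcal{G}$ the cellwise typicality forces this density, once marginalised over outward excursion bodies, to be uniformly within $1\pm\eps/2$ of $1$ as a function of the positions on $V$ of the inward atoms of $\Pi^u$. Running the conditional analogue of the soft local time comparison of \cite{PopovTeixeira} (a suitable version of their Proposition~$4.7$) at intensities $u$ (conditional) and $u(1+\eps)$ (fresh, unconditional) then produces a coupling in which the conditional $\I^u\cap A_1$ is contained in the independent $\I^{u(1+\eps)}\cap A_1$ outside a coupling-failure event of probability $\le c\exp(-c'\eps^2 u s^{a_{A_1}})$; the matching lower bound with $u(1-\eps)$ is symmetric.

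For any increasing $f$ with $\sup|f|\le M$ this stochastic sandwich gives, pointwise in $\omega\in\mathcal{G}$,
\begin{equation*}
\IE\bigl[f(\I^{u(1-\eps)}_{A_1})\bigr]-cM\exp(-c'\eps^2us^{a_{A_1}})\le\IE\bigl[f(\I^u_{A_1})\mid\I^u_{A_3}=\omega\bigr]\le\IE\bigl[f(\I^{u(1+\eps)}_{A_1})\bigr]+cM\exp(-c'\eps^2us^{a_{A_1}}),
\end{equation*}
and multiplying by $\1{\I^u_{A_3}\in\mathcal{G}}$ yields \eqref{e_conditionaldecoupling2}. The main obstacle I anticipate is the correct choice of $\mathcal{G}$ together with the Bayesian attribution entering its definition: one needs the $A_3$-trace to encode enough information about the outward excursions that typicality in $A_3$ forces typicality of the atoms of $\Pi^u$ on $V$, and at the same time little enough information that the inward atoms' conditional intensity is not displaced by more than $\eps$. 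Balancing these two competing demands against the cell-size optimisation, and handling the extra difficulty produced by the corner-smoothed geometry of $A_1^{\tiny\Square}$, is exactly what forces the specific forms of \eqref{bcircledef}--\eqref{asquaredef}.
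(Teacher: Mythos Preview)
Your proposal has a structural gap in the very first step. You want to realise the interlacements on $A_1\cup A_3$ via a Poisson process $\Pi^u$ on $V\times\R_+$ whose atoms carry a single excursion that is either ``inward'' (contributing to $A_1$) or ``outward'' (contributing to $A_3$). But in the construction~\eqref{interlacementsdef} each Poisson atom on $V$ is the starting point of an \emph{entire} random walk trajectory, and that trajectory typically makes many back-and-forth excursions between $V$ and $\partial A_2$ before escaping. These excursions are not independent: they are linked through the Markov chain of entrance/exit points (what the paper calls the \emph{clothesline process} $((W_k,Y_k))_{k\geq 1}$). So the dependence between $\I^u\cap A_1$ and $\I^u\cap A_3$ is not mediated by independent inward/outward labels on Poisson atoms, and your claimed conditional independence ``given $\Pi^u$ and the labels'' does not hold. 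As a consequence, the Bayesian-tilt argument in your third paragraph (the density on inward atoms being within $1\pm\eps/2$) has no foundation: there is no clean product structure to tilt.

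What the paper actually does is closer in spirit but technically quite different. One conditions not on a configuration on $V$ but on the full clothesline $\hat\zeta\in\mathcal{S}_u(V,\partial A_2)$, i.e.\ the complete list of entrance points at $V$ and exit points at $\partial A_2$ for every trajectory. Given $\hat\zeta$, the excursion pieces inside $A_2^C$ (which determine $\I^u\cap A_1$) are indeed conditionally independent of what happens inside $A_2$. The soft local time is then run on the space $\Sigma$ of excursion pieces, with density $g_{(w,y)}(z)=\IP_{w,y}[\Xi(w,y)=\Xi(z)]$, and the main work is a pointwise concentration estimate for $G^\Sigma_u(z)$ around $u\capacity(V)\pi(\Xi(z))$ uniformly in $z$. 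This requires detailed random walk estimates on the ``hanging probabilities'' $g_{(w,y)}$ and on $\pi$ (Lemma~\ref{l_expecslt}, Lemma~\ref{l_expmoment}, Propositions~\ref{p_boundprocircle} and~\ref{p_boundprosquare}); the exponents $a_{A_1}$ come out of the bound $\inf_{w_0,y_0}f_{A_1}(w_0,y_0)\geq c s^2 r^{-d}$ (ball) or $c s^{2d}r^{-(3d-2)}$ (cube) combined with the optimal choice $\lambda\asymp s^{2d-3}$ in an exponential Chebyshev inequality, not from any cell-size optimisation. The set $\mathcal{A}$ of ``good'' clotheslines is then defined by the sandwich $G^\Sigma_{u(1-\eps)}\le G^{\hat\zeta}\le G^\Sigma_{u(1+\eps)}$ holding with high probability, and $\mathcal{G}$ is obtained from $\mathcal{A}$ by a further averaging over the conditional law of $\hat\zeta$ given the clothesline on $\partial A_3$. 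Your mesoscopic-cell/Bayesian-attribution definition of $\mathcal{G}$ does not appear, and it is unclear how one would make it precise enough to yield the stated bounds.
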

We also obtain a result analogous to Theorem~\ref{t_main1}, 
but this time we allow the sprinkling factor to be arbitrarily big.
This decreases the ``precision'' (in the result below, 
$\IE f( \I^{u + u'}_{A_1})$ can be 
very different from~$\IE f( \I^{u}_{A_1})$), but, in compensation,
the size of the complement of the ``good'' set as well as the 
``error term'' become smaller.
\begin{theorem}
\label{t_main3}
Let $u'>u>0$. We use the same definitions as Theorem~\ref{t_main1}.
There are positive constants~$c,c'$ depending only 
on the dimension $d$, and a measurable (according to the random interlacements $\sigma$-field) 
set~$\mathcal{G}_{u'}\in\{0,1\}^{A_3}$ such that
\begin{equation*}
\IP^u \big{[} \I^u_{A_3}\in\mathcal{G}_{u'}\big{]}
\geq 1-\exp\Big(-c' u' s^{a_{A_1}}\Big),
\end{equation*}
and for any increasing function $f$ on the interlacements set intersected with $A_1$, with $\sup |f| < M$, we have
\begin{equation}
\label{e_conditionaldecoupling3}
\IE\big(f(\I^{u}_{A_1})\mid \I^{u}_{A_3} \big)
\1{\I^u_{A_3}\in\mathcal{G}_{u'}} 
\leq 
 \big(\IE f( \I^{u + u'}_{A_1})+cM\exp\big(-c' u' s^{a_{A_1}}\big)\big)\1{\I^u_{A_3}\in\mathcal{G}_{u'}}. 
\end{equation}
\end{theorem}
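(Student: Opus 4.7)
The plan is to adapt the proof of Theorem~\ref{t_main1} by keeping its soft local time coupling intact while trading the small-sprinkling Gaussian concentration estimate for a large-sprinkling linear one. Only the ``upper'' half of the coupling of Theorem~\ref{t_main1} is needed since (\ref{e_conditionaldecoupling3}) is one-sided, and the multiplicative sprinkling $u\to u(1+\eps)$ is replaced by the additive one $u\to u+u'$.

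First I would reproduce the conditional soft local time construction from Theorem~\ref{t_main1}: generate the trajectories of $\I^u$ visiting $A_3$ in order of their Poisson arrival times, using i.i.d.\ uniforms to sample entrance points on the interface $V$ from a running cumulative density. Conditioning on $\I^u_{A_3}$ constrains the behaviour of these trajectories on $A_3$ but leaves their restriction to $V$ and to $A_1$ essentially free. I would then run the analogous construction for an unconditional interlacement at intensity $u+u'$, and define $\mathcal{G}_{u'}$ as the event that the conditional soft local time is pointwise dominated on $V$ by the corresponding unconditional density at level $u+u'$. Standard soft local time arguments then give that, on $\mathcal{G}_{u'}$, the conditional configuration $\I^u_{A_1}$ is stochastically dominated by the unconditional $\I^{u+u'}_{A_1}$; (\ref{e_conditionaldecoupling3}) follows for increasing $f$, with the term $cM\exp(-c'u's^{a_{A_1}})$ absorbing the residual probability of coupling failure inside $\mathcal{G}_{u'}$.

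The central estimate is then $\IP^u(\I^u_{A_3}\notin\mathcal{G}_{u'})$, which reduces to controlling the upper deviations of a quantity that is essentially Poisson with mean proportional to $u\,\capacity(V)\asymp u\,r^{d-2}$ by (\ref{asymcapv}). Unlike in Theorem~\ref{t_main1}, where $\eps$ is small and one is forced into the Gaussian regime $\exp(-ct^2/\lambda)$ (producing $\eps^2$ in the exponent), here the large-deviation Chernoff tail $\IP(Z\geq \lambda+t)\leq \exp(-ct)$ for $t\gtrsim \lambda$ is available, which is linear in $u'$. Combining this with the escape-probability factor across the interface $V$---identical to the geometric factor arising in Theorem~\ref{t_main1} and responsible for the exponents $a_{A_1}$ through (\ref{acircledef})--(\ref{asquaredef}) and $r\asymp s^{b_{A_1}}$---produces the desired $\exp(-c'u's^{a_{A_1}})$ bound.

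The main obstacle is verifying that the coupling remains robust in the regime $u'\gg u$: when the sprinkling mass is much larger than the original intensity, the bookkeeping of which trajectories are constrained by the conditioning on $A_3$ and which are free requires care. This is handled by separating trajectories according to whether they visit $A_3$, applying the soft local time machinery only to those that do (whose entrance distribution on $V$ is close to the harmonic measure provided $\dist(A_1,A_3)\gtrsim s$), and noting that the unconstrained trajectories arising from the extra intensity $u'$ supply enough mass on $V$ to absorb any local excess of the conditional density without further conditional arguments.
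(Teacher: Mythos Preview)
Your high-level plan is right: keep the soft local time coupling from the proof of Theorem~\ref{t_main1}, use only its upper half, and replace the multiplicative sprinkling by the additive $u\to u+u'$. But the paper's execution is much shorter than what you sketch, and several of your ingredients are either unnecessary or do not match the actual construction.

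The paper does not introduce a separate linear Chernoff tail. It simply reuses the quadratic deviation bounds (\ref{e_moment3})--(\ref{e_moment4}) already established for Theorem~\ref{t_main1}, with different choices of $\hat u$ and $\delta$: to get $\GP[G^\Sigma_u(z)\le G^\Sigma_{u+u'}(z)\text{ for all }z\in\mathcal K]\ge 1-\exp(-c'u's^{a_{A_1}})$ one picks an intermediate threshold, applies (\ref{e_moment3}) at level $\hat u=u$ with $\delta\asymp u'/u$, and applies (\ref{e_moment4}) at level $\hat u=u+u'$ with $\delta$ a fixed constant in $(0,1)$. Since $u'>u$, both resulting exponents $c\delta^2\hat u\,s^{a_{A_1}}$ are at least of order $u's^{a_{A_1}}$, and that is the whole computation. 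After this, the passage to the conditional statement on $A_3$ is verbatim the square-root argument of Proposition~\ref{t_varpi} and the paragraph preceding Theorem~\ref{t_main2}; nothing new is needed.

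Your last paragraph is a red herring. There is no robustness issue when $u'\gg u$, and no separation of trajectories according to whether they hit $A_3$ is performed or required. The conditioning on $A_3$ enters only at the very end, through the random map $m_{\hat\psi}$ from clothesline configurations on $\partial A_3$ to those on $V\times\partial A_2$; all of the soft local time analysis happens at the level of $V$ and $\partial A_2$, where the clothesline $\hat\zeta$ is treated as deterministic and the comparison $G^{\hat\zeta}\le G^\Sigma_{u+u'}$ is unconditional on the Poisson point process side. Also, your description of the construction (``i.i.d.\ uniforms to sample entrance points on~$V$ from a running cumulative density'') does not match the paper's setup: the soft local time lives on $\Sigma\times\R_+$ with $\Sigma$ a space of \emph{excursions} in $A_2^C$, indexed by their endpoints in $\partial A_1\times V$, not a process on~$V$ itself. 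Make sure you are working with the same objects before adapting the argument.
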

\begin{remark}
We have to explain why we need to consider $A_3\subset A_2$. Indeed, at first sight it seems that conditioning on a configuration on~$A_3$ does not add generality to our results, since any fixed configuration on~$A_3$ corresponds to a set of configurations on~$A_2$. However, the problem with always setting $A_3=A_2$ is the following: the ``exceptional set''~$\mathcal{G}^c$ will then be supported on the whole~$A_2$, and this can be inconvenient for applications. For example, assume that we successively apply the conditional decoupling results to a process (such as the one of Section~\ref{s_applic}) that ``explores'' the interlacement environment. If that process has explored only a finite chunk of~$A_2$, we would not be able to say if the configuration is ``good'' (i.e., belongs to~$\mathcal{G}$) by only observing that finite chunk. This would force us to condition on the (configuration on the) whole~$A_2$, which would mean that a subsequent application of a conditional decoupling may be difficult, since we already ``revealed'' some information about the configuration on a set which is ``too big'' (i.e., when we apply the decoupling result for the next time, the ``new'' $A_1$ may be inside the ``previous'' $A_2$)
\end{remark}

\begin{remark}
In the course of the proof of the above theorems we actually prove a stronger result: the same conditional decoupling inequality  holds true if we replace the sets $\I^u_{A_1}\subset A_1$ and $\I^u_{A_3}\subset A_3$ by sets of \emph{random walk excursions} in $A_1$ and $A_3$ (we also have to replace the function $f$ by an increasing function on 
the set of excursions). That is, the conditional decoupling continues to work when we replace the ranges of the excursions (which constitute the random interlacements set) by the actual excursions themselves. 
We chose to state the results in the above manner for the sake of clarity and brevity. Note that this remark also applies to the decoupling obtained by Popov and Teixeira in~\cite{PopovTeixeira}.  
\end{remark}

\begin{remark}
The above theorems can be proved in the same way if we replace the smoothed hypercube $A_1^{\tiny \Square}$ by a smoothed version of a box $[0,a_1]\times\dots\times[0,a_d]$, with~$c^{-1}r<a_i<cr$ for all $i=1,\dots,d$, and some constant~$c>1$, and then replace the sets $A_2^{\tiny \Square}$ and~$V^{\tiny \Square}$ accordingly. 
We chose to prove the theorems for~$A_1^{\tiny \Square}$ only to simplify the notation. We also note that we prove the theorem for both balls and boxes because the error term obtained in the decoupling for balls is much smaller than the error obtained in the decoupling for boxes, 
but at the same time the decoupling between boxes tends to be more useful because boxes cover the space in a much more efficient manner.  
\end{remark}

\begin{remark} For $d=3$, the only way to obtain an exponentially small (instead of a \emph{stretched exponentially} small) error term in equations~\eqref{e_conditionaldecoupling2} and~\eqref{e_conditionaldecoupling3} is to allow the distance~$\sim s$ between the sets~$A_1$ and~$A_2$ to be of the same order of the minimal diameter~$\sim~r$.
\end{remark}

Here is an overview of the paper. In Subsection~\ref{s_applic},
we discuss an application of some of our results. 
In Section~\ref{s_slt} we recall the soft local times technique. In Section~\ref{s_simexc} we show how we simulate the interlacements set~$\I^u_{A_1}$ conditioned on the information given by~$\I^u_{A_2}$ using a suitable version of 
the soft local times method. Finally, in Section~\ref{s_cd}, we prove the main theorem using a large deviations estimate for the soft local times associated with~$\I^u_{A_1}$. The Appendix is then used to collect and derive the technical estimates we need.

\subsection{An application: biased random walk on the interlacement set}
\label{s_applic}
Let~$G$ be some (possibly random) subset of~$\Z^d$, $d\geq 2$.
Fix a parameter~$\beta>0$, 
which accounts for the bias; also, fix some non-zero vector~$\ell\in\Z^d$.
Let us define the \emph{conductances}
on the edges of~$\Z^d$ in the following way:
\[
 \mathcal{C}(x,y) = \begin{cases}
           e^{\beta(x+y)\cdot \ell}, & \text{if $x,y$ are 
             neighbors and belong to $G$}, \\
           0, & \text{otherwise},
          \end{cases}
\]
and we call the collection of all conductances
$\omega = \big\{\mathcal{C}(x,y), x,y\in\Z^d\big\}$ the random environment.
Consider a random walk $(X_n, n\geq 0)$ in this environment
of conductances; i.e., its transition probabilities
are given by
\[
 P^{\omega}[X_{n+1}=y \mid X_n=x] 
= \frac{\mathcal{C}(x,y)}{\sum_{z}\mathcal{C}(x,z)}
\]
(the superscript in~$P^{\omega}$ indicates that we are dealing
with the ``quenched'' probabilities, i.e., when the underlying 
random graph / conductancies are already fixed).

There have been significant interest towards this model in recent years,
mainly in the case when~$G$ is the infinite cluster of supercritical
Bernoulli percolation model, see e.g.\ 
\cite{BergerGantertPeres, Szn03, Fribergh}. 
In particular, one remarkable 
fact is the following: the walk is ballistic (transient and with 
positive speed) in the direction of the drift if $\beta>0$
is small enough; however, it moves only sublinearly fast 
(its displacement is only of order~$t^a$ by time~$t$ with~$a\in(0,1)$,
as proved in~\cite{FriberghHammond})
for large values of~$\beta$. 

In the work~\cite{FriberghPopov} the case $G=\I^u$ was considered.
It turned out that in dimension~$d=3$, for \emph{any} value
of~$\beta>0$, although still transient in the direction of the 
drift, the walk is not only sub-ballistic, but has also sub-polynomial
speed, in the sense that its distance to the origin grows
 slower than~$t^\eps$ for any $\eps>0$.
This is also in contrast with the result that the walk on~$\I^u$
without any drift is diffusive (so, loosely speaking, its ``speed''
is~$\sqrt{t}$),
as shown in~\cite{ProcacciaRosenthalSapozhnikov}.

We will not describe all the details of~\cite{FriberghPopov} here,
but the main idea is the following. As in the case of the biased
walk on the infinite percolation cluster, to prove zero speed
one needs to show that the walk frequently gets caught in traps.
These traps are ``dead ends'' of the environment looking in the 
direction of the bias, see Figure~\ref{f_trap}. 
\begin{figure}
 \centering \includegraphics{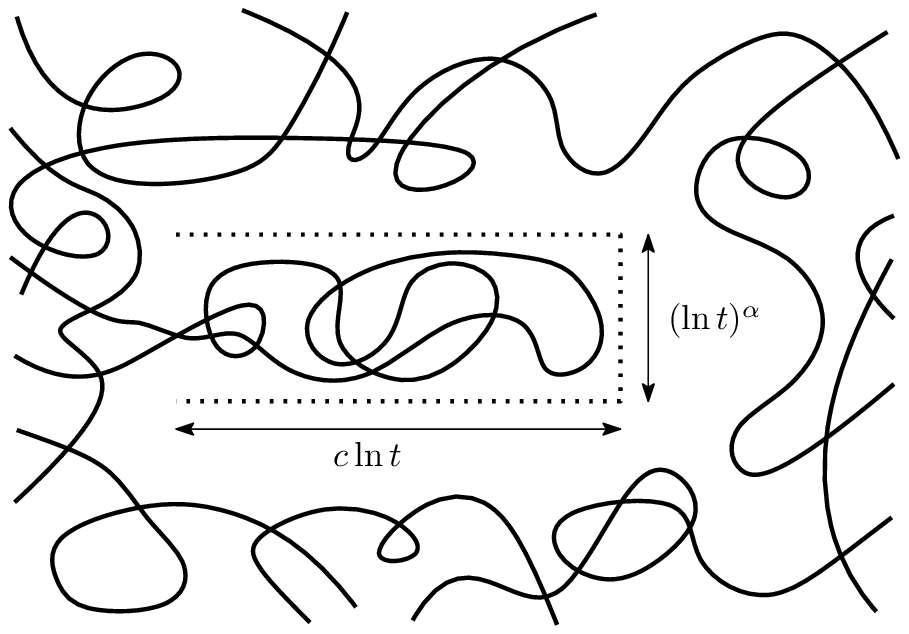} 
\caption{A trap for the random walk on the 
interlacement set (on this picture,
the bias is directed along the first coordinate vector).
 Only the interlacements are shown; 
the trajectory of the RWRE~$X$ is not present
on the picture.}
\label{f_trap}
\end{figure}
When the walk enters such a trap, the bias prevents it from
goint out, so there is a good chance that the walk will spend
quite a lot of time there, and this effectively leads to zero speed.
Now, the crucial fact is that, specifically in three dimensions,
it is much cheaper to have a trap in the interlacement set than 
in the (Bernoulli) percolation cluster. Indeed, it is possible 
to show that the capacity of the dotted set on Figure~\ref{f_trap}
is of order $\frac{\ln t}{\ln \ln t}$ for any fixed $\alpha<1$.
The formula~\eqref{eq_vacant>3} then shows that having a trap
as above has only a subpolynomial (in~$t$) cost; also, it turns
out that ``forcing'' a trajectory to create a ``dead end'' as shown
on the picture is not too costly as well. 

So, when the walk advances in the direction of the bias, 
 from time to time it will encounter a trap and be trapped.
However, to make such an argument rigorous, one has to face
the following difficulty. When the walk already explored 
some parts of the environment and then came to an unexplored area, 
we can no longer use~\eqref{eq_vacant>3} 
to estimate the probability that there is a trap in front of it,
due to the lack of independence.
It is here that the conditional decoupling enters the scene:
it is possible to use the main results of this paper to show
that probability of having a trap in front of the particle
(when it comes to an unexplored area) is not very small.
As mentioned above, the detailed argument can be found 
in~\cite{FriberghPopov}.

\section{Soft local times}
\label{s_slt}

In the present section we describe the technique introduced in \cite{PopovTeixeira}, the so called Soft Local Times method. This method essentially allows us to simulate any number of random variables taking values in a state space~$\Sigma$ using a realization of a Poisson point process in~$\Sigma\times\R_+$.

Let~$\Sigma$ be a locally compact Polish metric space,
 and let~$\mathcal{B}(\Sigma)$ be its Borel~$\sigma$-algebra. Let~$\mu$ be a Radon measure over~$\mathcal{B}(\Sigma)$, so that every compact set has finite $\mu$-measure.

Such measure space~$(\Sigma,\mathcal{B}(\Sigma),\mu)$ is the 
usual setup for the construction of a Poisson point process on~$\Sigma$. We consider the space of Radon point measures in~$\Sigma\times\R_+$
\begin{equation}
 \label{e_M1}
 \M = \vvviiiggg\{\m = \sum_{\lambda \in \Lambda} \delta_{(z_\lambda,
 v_\lambda)}; z_\lambda \in \Sigma, v_\lambda \in \mathbb{R}_+ 
\text{ and } \m(K) < \infty \text{ for all compact $K$} \vvviiiggg\},
\end{equation}
endowed with the $\sigma$-algebra generated by the evaluation maps \begin{equation*}
\eta\mapsto\eta(D),\phantom{*}D\in\mathcal{B}(\R_+)\otimes\mathcal{B}(\Sigma).
\end{equation*} 
We are then able to construct a Poisson point process~$\eta$ 
in the space $(L,\mathcal{D},\mathbb{Q})$ with intensity measure given by $\mu\otimes\d v$, where~$\d v$ is the Lebesgue measure on~$\R_+$, see \cite{Resnick1}, Proposition~$3.6$ on p.$130$.

The next proposition, originally seen in \cite{PopovTeixeira}, is at the core of the soft local times argument.

\begin{proposition}
\label{p_simslt}
Let $g:\Sigma \to \mathbb{R}_+$ be a measurable function with $\int g(z) \mu(\d z) = 1$. For $\m = \sum_{\lambda \in \Lambda} \delta_{(z_\lambda, v_\lambda)} \in \M$, we define
\begin{equation}
 \xi = \inf \{ t \geq 0; \text{ there exists $\lambda \in \Lambda$ 
such that $t g(z_\lambda) \geq v_\lambda$}\}.
\end{equation}
Then under the law $\mathbb{Q}$ of the Poisson point process~$\m$,
\begin{enumerate}[(i)]\addtolength{\itemsep}{2mm}\vspace{2mm}
 \item \label{e:io} there exists a.s.\
 a unique $\hat{\lambda} \in \Lambda$ such that 
 $\xi g(z_{\hat{\lambda}}) = v_{\hat{\lambda}}$,
 \item \label{e:xiio} $(z_{\hat{\lambda}}, \xi)$ is distributed as $g(z) \mu(dz) 
\otimes \Exp(1)$,
 \item \label{e:mprime} $\m' := \sum_{\lambda \neq \hat{\lambda}} \delta_{(z_\lambda,v_\lambda - 
\xi g(z_\lambda))}$ has the same law as~$\m$ and is independent 
of $(\xi, \hat{\lambda})$.
\end{enumerate}
\end{proposition}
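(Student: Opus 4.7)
The proof splits into the three claims (i)--(iii) and uses only standard mapping and restriction properties of Poisson point processes, together with the hypothesis that $g(z)\mu(\d z)$ is a probability measure on $\Sigma$.

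First I would compute the law of $\xi$. Setting $R_t := \{(z,v)\in\Sigma\times\mathbb{R}_+ : v \leq t g(z)\}$, one has $\{\xi>t\} = \{\m(R_t)=0\}$, and the $\mu\otimes \d v$-mass of $R_t$ equals $\int_\Sigma t g(z)\,\mu(\d z) = t$. Hence $\mathbb{Q}[\xi>t]=e^{-t}$, so $\xi\sim\Exp(1)$, in particular a.s.\ finite and strictly positive. For the uniqueness in (i), I would apply the bijection $\Phi(z,v)=(z,v/g(z))$ on $\{g>0\}$ (points with $g(z_\lambda)=0$ never realize the infimum): the image of $\m$ is a Poisson point process of intensity $g(z)\mu(\d z)\otimes \d u$, which is absolutely continuous in $u$, so no two points share the same $u$-coordinate almost surely, giving a unique minimizer. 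For (ii), in the $\Phi$-coordinates the pair $(z_{\hat\lambda},\xi)$ is the location of the point with smallest $u$-coordinate in a Poisson process whose $\Sigma$-marginal intensity has total mass $1$; a standard computation (condition on there being $n$ points in $\Sigma\times(0,T]$, which are then i.i.d.\ with law $g(z)\mu(\d z)\otimes\mathrm{Unif}(0,T)$, and let $T\to\infty$) yields the joint law $g(z)\mu(\d z)\otimes\Exp(1)$ with independent components.

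The delicate point is (iii). The key observation is that the shear $T_\xi:(z,v)\mapsto(z,v-\xi g(z))$ is a $\mu\otimes \d v$-preserving bijection from the upper region $\{v>\xi g(z)\}$ onto $\Sigma\times\mathbb{R}_+$, and $\m'$ is exactly the pushforward under $T_\xi$ of the restriction of $\m$ to this region. What must be established is that, conditionally on $(\xi,\hat\lambda,z_{\hat\lambda},v_{\hat\lambda})$, the remaining points form a Poisson process of intensity $\mu\otimes \d v$ on $\{v>\xi g(z)\}$. This is the main obstacle, since $\xi$ is a random threshold determined by the whole configuration, so the ordinary restriction theorem does not apply directly. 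I would resolve it by a discretization argument: approximate $\xi$ by $\xi_n$ taking values in a countable grid, on each atom $\{\xi_n=t_k\}$ the region $R_{t_k}$ is deterministic and the standard restriction/independence property for Poisson processes applies to the pair (inside $R_{t_k}$, outside $R_{t_k}$), and then pass to the limit using a.s.\ continuity of $T_t$ in $t$ at $t=\xi$. Combined with (ii), this simultaneously yields that $\m'$ has the same law as $\m$ and is independent of $(\xi,\hat\lambda)$.
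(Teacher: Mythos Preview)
The paper does not actually prove this proposition: immediately after stating it, the authors write that ``the proof is remarkably simple, mainly relying on the independence of a Poisson process in disjoint sets, and can be seen in the original paper'' and refer to~\cite{PopovTeixeira}. So there is no proof here to compare against, only that one-line hint.

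Your argument is correct and matches the spirit of that hint. One remark on part~(iii): you already have the right tool in hand but stop using it too early. Having introduced the map $\Phi(z,v)=(z,v/g(z))$ on $\{g>0\}$, under which the image process has intensity $g(z)\mu(\d z)\otimes \d u$, you can finish~(iii) entirely in those coordinates. There $\xi$ is the smallest $u$-coordinate, and forming $\m'$ amounts to deleting the lowest point and translating all remaining $u$-coordinates down by~$\xi$; the claim then reduces to the restart property of a rate-$1$ Poisson process on $\mathbb{R}_+$ with i.i.d.\ marks from $g(z)\mu(\d z)$, together with the fact that the points with $g(z_\lambda)=0$ form an independent Poisson process left untouched by the shear. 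This is exactly the ``independence on disjoint sets'' picture the paper alludes to, and it bypasses the discretization step. Your discretization route does work, but carrying it out cleanly forces you to track the points that may lie in the shrinking random annulus $\{\xi g(z)<v\le \xi_n g(z)\}$ before passing to the limit, which is more bookkeeping than the problem deserves.
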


The proof is remarkably simple, mainly relying on the independence of a Poisson process in disjoint sets, and can be seen in the original paper.

With the above proposition we are able to simulate as many 
random variables as we want:

\begin{figure}[ht]
\centering
\includegraphics[scale = 1]{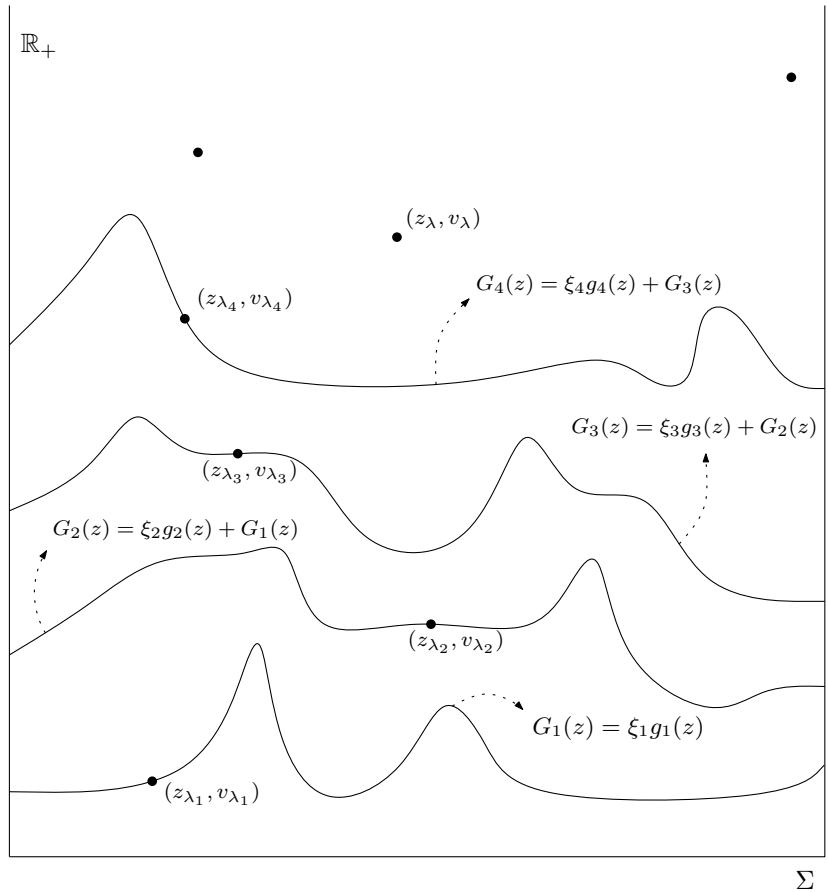}
\vspace{0.5cm}
\caption{An example showing the definition below. Under mild conditions we are able to use Proposition $\ref{p_simslt}$ to simulate a sequence of random variables over $\Sigma$.}
\label{softlocaltimesfig1}
\end{figure}

Let $X_1,\space X_2,\dots,\space  X_n$ be random variables on $\Sigma$ such that $X_1$'s distribution is absolutely continuous with respect to $\mu$ and,  for all $i=2,\dots,n$ the probability measure generated by $X_i$, conditioned on on the values taken by $X_1,\dots,\space  X_{i-1}$, is absolutely continuous with respect to $\mu$. Using the process $\m$ constructed above, we define
\begin{equation}
\begin{aligned}
 & g_1:\Sigma\mapsto\R_+\text{, the density function of $X_1$ with respect to $\mu$,}\\
 & \xi_{1} := \inf \big\{ t \geq 0; \text{ there exists $\lambda \in \Lambda$ such that $t g_1( z_\lambda) \geq v_\lambda$}\big\}, \\
 & G_{1}(z) := \xi_{1} \; g_1(z), \text{ for $z \in \Sigma$,}\\
 & (z_{\lambda_1}, v_{\lambda_1}) \text{, the unique pair in $\{(z_\lambda, v_\lambda)\}_{\lambda \in \Lambda}$ with  $G_1( z_{\lambda_1}) = v_{\lambda_1}$.}
\end{aligned}
\end{equation}
We now define $g_2:\Sigma\mapsto\R_+$ to be the density 
of $X_2$ conditioned on the event $\{X_1=z_{\lambda_1}\}$. Using the fact that $\m_1 := \sum_{\lambda \neq {\lambda_1}} \delta_{(z_\lambda,v_\lambda - 
\xi_1 g_1(z_\lambda))}$ has the same law as $\m$ and is independent 
from $(\xi_1, {\lambda_1})$ we define
\begin{equation}
\begin{aligned}
 & \xi_{2} := \inf \big\{ t \geq 0; \text{ there exists $\lambda \in \Lambda$ such that $t g_2( z_\lambda)+G_1(z_{\lambda}) \geq v_\lambda$}\big\}, \\
 & G_{2}(z) := \xi_{2} \; g_2(z)+G_1(z), \text{ for $z \in \Sigma$,}\\
 & (z_{\lambda_2}, v_{\lambda_2}) \text{, the unique pair in $\{(z_\lambda, v_\lambda)\}_{\lambda \in \Lambda}$ with $ G_2( z_{\lambda_2}) = v_{\lambda_2}$.}
\end{aligned}
\end{equation}
Then, recursively, for $1\leq k\leq n$ we define $g_k:\Sigma\mapsto\R_+$ to be the density function of $X_k$ conditioned on the event $\{X_1=z_{\lambda_1},\dots,X_{k-1}=z_{\lambda_{k-1}}\}$,
\begin{equation}
\begin{aligned}
 & \xi_{k} := \inf \big\{ t \geq 0; \text{ there exists $\lambda \in \Lambda$ such that $t g_k( z_\lambda)+G_{k-1}(z_{\lambda}) \geq v_\lambda$}\big\}, \\
 & G_{k}(z) := \xi_{k} \; g_{k}(z)+G_{k-1}(z), \text{ for $z \in \Sigma$,}\\
 & (z_{\lambda_k}, v_{\lambda_k}) \text{, the unique pair in $\{(z_\lambda, v_\lambda)\}_{\lambda \in \Lambda}$ with $ G_k( z_{\lambda_k}) = v_{\lambda_k}$.}
\end{aligned}
\end{equation}
We refer to Figure~$\ref{softlocaltimesfig1}$. Using Proposition \ref{p_simslt} together with the above construction, we are able to state the following proposition:

\begin{proposition}
\label{p_decslt}
The vector $(z_{\lambda_1},\dots,z_{\lambda_n})$ has the same law as $(X_1,\dots,X_n)$.
\end{proposition}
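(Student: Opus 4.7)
The plan is to proceed by induction on $n$, using Proposition~\ref{p_simslt} at each step on the appropriate residual Poisson point process. The base case $n=1$ is immediate: Proposition~\ref{p_simslt}(ii) applied with $g=g_1$ yields that $z_{\lambda_1}$ is distributed as $g_1(z)\mu(\d z)$, which is precisely the law of $X_1$.

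For the inductive step, suppose $(z_{\lambda_1},\dots,z_{\lambda_{k-1}}) \laweq (X_1,\dots,X_{k-1})$. Iterating Proposition~\ref{p_simslt}(iii) $k-1$ times shows that the residual point measure
\[
\eta_{k-1} := \sum_{\lambda\notin\{\lambda_1,\dots,\lambda_{k-1}\}} \delta_{\bigl(z_\lambda,\; v_\lambda - G_{k-1}(z_\lambda)\bigr)}
\]
is again a Poisson point process on $\Sigma\times\R_+$ of intensity $\mu\otimes\d v$, and that it is independent of the $\sigma$-algebra $\mathcal{F}_{k-1}:=\sigma(z_{\lambda_1},\xi_1,\dots,z_{\lambda_{k-1}},\xi_{k-1})$. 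The substitution $v'_\lambda := v_\lambda - G_{k-1}(z_\lambda)$ rewrites the defining inequality for $\xi_k$ as $t\,g_k(z_\lambda)\geq v'_\lambda$, so that $(z_{\lambda_k},\xi_k)$ is exactly the pair produced by applying Proposition~\ref{p_simslt} to $\eta_{k-1}$ with density $g_k$. Conditional on $\mathcal{F}_{k-1}$, the function $g_k$ is deterministic (by construction it is the conditional density of $X_k$ given $\{X_1=z_{\lambda_1},\dots,X_{k-1}=z_{\lambda_{k-1}}\}$), while $\eta_{k-1}$ is a fresh Poisson point process. Proposition~\ref{p_simslt}(ii) then gives that the conditional law of $z_{\lambda_k}$ given $\mathcal{F}_{k-1}$ is $g_k(z)\mu(\d z)$, which matches the conditional law of $X_k$ given $(X_1,\dots,X_{k-1})$ by the inductive hypothesis. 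Averaging (or, equivalently, applying the tower property) closes the induction.

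The main obstacle, and the point requiring the most care, is the bookkeeping needed to identify the minimization defining $\xi_k$ with the soft local time of $g_k$ with respect to the residual process $\eta_{k-1}$. One must verify that the previously extracted atoms $\lambda_1,\dots,\lambda_{k-1}$ (whose shifted heights $v'_\lambda$ are nonpositive) can be safely excluded from the relevant infimum, so that $\xi_k$ genuinely agrees with the quantity to which Proposition~\ref{p_simslt} applies. Once this identification is in place, the rest of the proof is a straightforward iterated application of parts (ii) and (iii) of Proposition~\ref{p_simslt}, combined with the independence of each residual process from the past.
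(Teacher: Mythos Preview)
Your proof is correct and is precisely the argument the paper has in mind: the paper does not spell out a proof of this proposition at all, but simply declares it as a consequence of Proposition~\ref{p_simslt} together with the iterative construction. Your induction, together with the remark about excluding the previously extracted atoms from the infimum, is exactly the routine verification the paper leaves to the reader.
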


We call the function $G_n(z)$ the soft local time of the vector $(X_1,\dots,X_n)$ up to time~$n$ with respect to the measure $\mu$, or more usually simply the soft local time. If $T$ is a stopping 
time with respect to the canonical filtration generated by the variables $X_i$, it is simple to define $G_T(z)$, the soft local time up to time $T$.

Note that by controlling the value of the soft local times function we will automatically control the values our random variables take, as the next corollary summarizes:

\begin{corollary}
\label{c_couplez}
For any measurable function $h: \Sigma \to \R_+$ we have, using the same notation as above,
\begin{equation}
 \label{e_couplesingle}
 \mathbb{Q} \Big[ \{z_1, \dots, z_T\} \subseteq \{z_{\lambda}; v_{\lambda} \leq h(z_{\lambda})\} \Big] \geq \mathbb{Q} \big[ G_T(z) \leq h(z), \text{ for $\mu$-a.e. $z \in \Sigma$} \big],
\end{equation}
for any finite stopping time $T \geq 1$.
\end{corollary}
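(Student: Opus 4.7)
The plan is to unravel the recursive construction of the soft local times and observe that the selected pair $(z_{\lambda_k}, v_{\lambda_k})$ always lies exactly on the graph of $G_k$; together with the pointwise monotonicity of the sequence $(G_k)_{k\geq 1}$, this forces the vertical coordinate of every chosen point to be dominated by $G_T$ evaluated at the corresponding point, which is in turn dominated by $h$ on the event in question.

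More concretely, I would proceed as follows. First, by the recursive construction given just before the corollary, for every $k \in \{1,\dots,T\}$ one has the deterministic identity
\begin{equation*}
v_{\lambda_k} = \xi_k\, g_k(z_{\lambda_k}) + G_{k-1}(z_{\lambda_k}) = G_k(z_{\lambda_k}),
\end{equation*}
since $(z_{\lambda_k}, v_{\lambda_k})$ is selected as the unique point of $\eta$ at which the increasing function $G_k$ first ``touches'' a Poisson atom. Second, the sequence $(G_k)$ is pointwise non-decreasing in $k$: the recursion $G_k(z) = G_{k-1}(z) + \xi_k\, g_k(z)$ with $\xi_k \geq 0$ and $g_k \geq 0$ gives $G_k(z) \leq G_T(z)$ for every $z \in \Sigma$ and every $k \leq T$. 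Combining these two observations,
\begin{equation*}
v_{\lambda_k} \;=\; G_k(z_{\lambda_k}) \;\leq\; G_T(z_{\lambda_k}) \quad \text{for every } k = 1,\dots,T.
\end{equation*}

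Third, I would handle the ``$\mu$-a.e.'' qualifier. On the event $E = \{G_T(z) \leq h(z) \text{ for $\mu$-a.e.\ } z\}$, there is a (random) $\mu$-null set $N$ outside of which $G_T \leq h$. By iterating part~\eqref{e:xiio} of Proposition~\ref{p_simslt}, the conditional law of each $z_{\lambda_k}$ given the data from steps $1,\dots,k-1$ has density $g_k$ with respect to $\mu$, hence is absolutely continuous with respect to $\mu$; therefore $z_{\lambda_k} \notin N$ almost surely. Consequently, on $E$ we have $v_{\lambda_k} \leq G_T(z_{\lambda_k}) \leq h(z_{\lambda_k})$ for every $k \leq T$ almost surely, so $\{z_{\lambda_1}, \dots, z_{\lambda_T}\} \subseteq \{z_\lambda : v_\lambda \leq h(z_\lambda)\}$ with probability one on $E$. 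Taking $\mathbb{Q}$-probabilities yields the claimed inequality. Since $T$ is a stopping time, the above argument applies pathwise on $\{T = n\}$ for each finite $n$, so no additional care is needed there.

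There is no real obstacle in this proof; it is mostly a bookkeeping exercise. The only mildly delicate point is reconciling the pointwise nature of the inequality $v_{\lambda_k} \leq G_T(z_{\lambda_k})$ with the fact that the hypothesis on $G_T \leq h$ is stated only $\mu$-almost everywhere, and this is resolved immediately by the absolute continuity of the marginal distributions of the $z_{\lambda_k}$'s with respect to $\mu$.
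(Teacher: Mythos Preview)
The paper states this corollary without proof, treating it as an immediate consequence of the construction (the sentence preceding it reads ``by controlling the value of the soft local times function we will automatically control the values our random variables take''). Your argument is the natural one and supplies exactly the missing bookkeeping: the identity $v_{\lambda_k}=G_k(z_{\lambda_k})$ together with the monotonicity $G_k\le G_T$ is the whole point.

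One small imprecision in your treatment of the $\mu$-a.e.\ qualifier: the null set $N=\{z:G_T(z)>h(z)\}$ depends on $G_T$, hence on data from \emph{all} steps up to~$T$, so it is not measurable with respect to ``the data from steps $1,\dots,k-1$'' when $k<T$; absolute continuity of the conditional law of $z_{\lambda_k}$ given that smaller $\sigma$-field does not by itself force $z_{\lambda_k}\notin N$. The clean fix is to work instead with $N_k=\{z:G_k(z)>h(z)\}$, which \emph{is} measurable with respect to $\sigma(z_{\lambda_1},\dots,z_{\lambda_{k-1}},\xi_1,\dots,\xi_k)$; since $G_k\le G_T$ one has $N_k\subset N$, so on $E$ also $\mu(N_k)=0$, and now the conditional law of $z_{\lambda_k}$ given that $\sigma$-field (which is $g_k\,\mu$ by Proposition~\ref{p_simslt}\eqref{e:xiio}) does give $z_{\lambda_k}\notin N_k$ a.s., hence $v_{\lambda_k}=G_k(z_{\lambda_k})\le h(z_{\lambda_k})$. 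With this adjustment your proof is complete.
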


\section{Simulating excursions}
\label{s_simexc}

In this section we will show a way of simulating the intersection of the random interlacements set with a given subset of $\Z^d$ in such a way as to make explicit the dependence each random walk excursion has with its entrance and exit points on the subset. We refer the reader to Figure~$\ref{clotheslineslt}$ for a brief overview of the arguments used in this section.
\begin{figure}[ht]
\centering
\includegraphics[scale = 1]{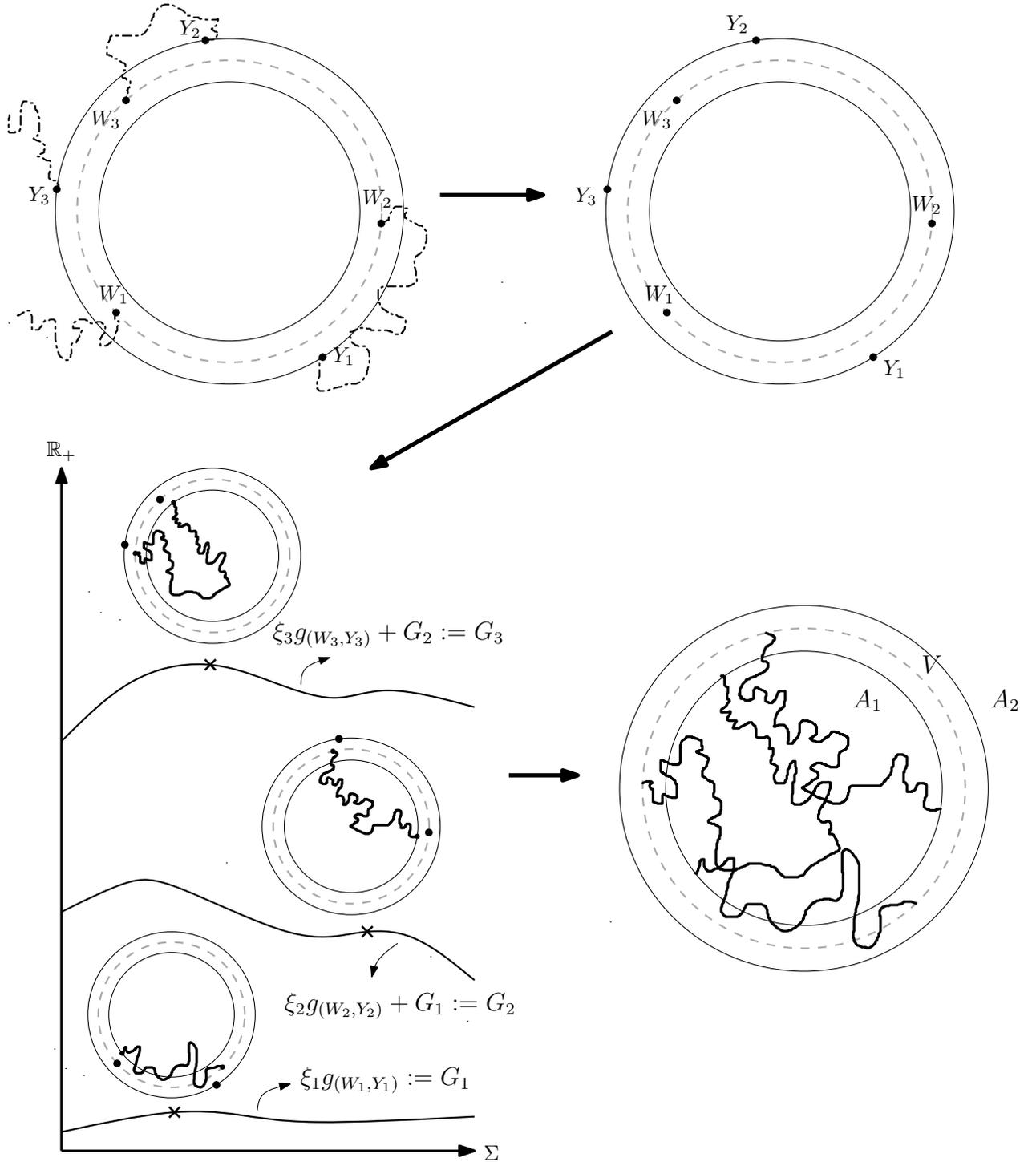}
\caption{The figure shows how we will use the soft local times technique to simulate the range of a simple random walk trajectory intersected with~$A_1$. We first simulate a process of 
pairs of points $((W_k,Y_k),k\geq 0)$ denoting the entrance at $V$ and exit at $\partial A_2$ of a simple random walk trajectory that 
starts at $V$. We then use the soft local times method to simulate the pieces of trajectory that lie between each of the pairs $(W_k,Y_k)$.}
\label{clotheslineslt}
\end{figure}

It is clear from \eqref{interlacementsdef} the fact that in order to simulate the random interlacements set at level $u$ in a bounded subset $K$ of $\Z^d$ we need only to pick a~$N^u_K\eqd\textit{Poisson}(u\capacity(K))$ number of points in $\partial K$, each point chosen  according to the measure~$\overline e_K (\cdot)$, and from each point start a simple random walk.

We intend to study $\I^u_{A_1}=\I^u\cap A_1$, showing that this set is not much influenced by the random interlacements set intersected with $A_2$, $\I^u_{A_2}=\I^u\cap A_2$. We will later clarify what we mean by ``influence". For now, we observe that the only 
``information" $\I^u_{A_1}$ receives from $\I^u_{A_2}$ is the location of the entrance and exit points of the excursions on $\partial A_2$ of the random walks that constitute $\I^u_{A_2}$. 

Let us begin the work towards our result. We first generate the points of entrance at~$V$ and exit from $A_2^C$ of each excursion on $V$ of a random walk trajectory. These points will be the clothesline onto 
which we will hang the pieces of trajectory that meet $A_1$, we will do so using the soft local times method.

Let us define the successive return and departure times between $V$ and $A_2$. Given a trajectory that starts at $V$, we define
\begin{align}
\nonumber
D_0 &= 0, \qquad & & R_1 = H_{\partial A_2},\\
D_1 & = H_{\V} \circ \theta_{R_1} + R_1, \qquad & & R_2 = H_{\partial A_2} \circ \theta_{D_1} + D_1,\label{exctimedef}\\
D_2 & = H_{\V} \circ \theta_{R_2} + R_2 \qquad & & \text{and so on.} \nonumber
\end{align}
We also define the random time
\begin{equation}
 \label{e_tdelta}
 T_\Delta = \inf \{k \geq 1; R_k = \infty\},
\end{equation}
which is almost surely finite, as the walk is transient.

Let $(X_n,n\geq 0)$ be the simple random walk with initial distribution given by $\overline e_V (\cdot)$. 
Let~$\Delta$ be an artificial cemetery state. 
We construct a random sequence of elements of $(V\times \partial A_2)\cup\{\Delta\}$ in the following way: Conditioned on the event $\{T_{\Delta}=m\}$, we let
\begin{align*}
\lefteqn{\big((W_1,Y_1),\dots,(W_{m-1},Y_{m-1}),(W_{m},Y_{m}),(W_{m+1},Y_{m+1}),\dots\big)}
\phantom{********************}\\
&=
\big((X_{D_0},X_{R_1}),\dots,(X_{D_{m-2}},X_{R_{m-1}}),\Delta,\Delta,\dots\big).
\end{align*}
It is then elementary to prove that the process $((W_k,Y_k))_{k\geq 1}$ inherits the Markov property from the simple random walk. We call $((W_k,Y_k))_{k\geq 1}$ the clothesline process started at~$W_1$. When there is no risk of confusion we will also denote by $\IP_{w_0}$ the probability measure associated with the clothesline process started at a given point $w_0\in V$.
\begin{figure}[ht]
\centering
\includegraphics[scale = .8]{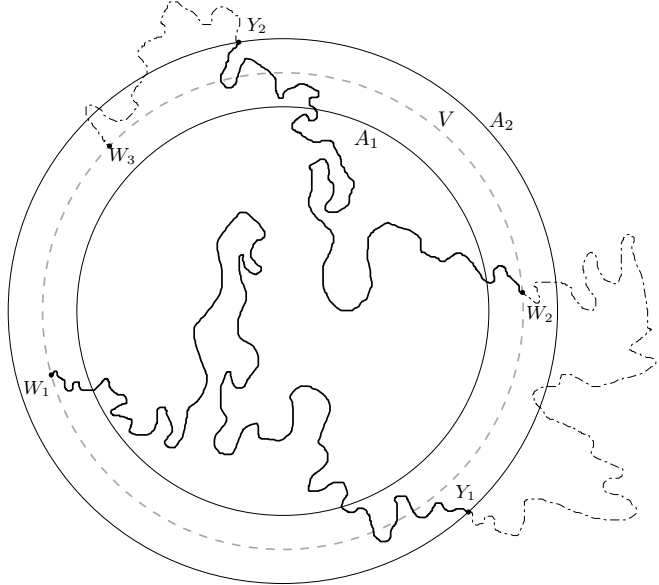}
\vspace{0.5cm}
\caption{An example of the process $((W_k,Y_k))_{k\geq 1}$.}
\label{processdeffig2}
\end{figure}

Let us now use the soft local times method to generate the trajectories inside $A_1$, given the entrance and exit points $((W_k,Y_k))_{k\geq 1}$.
 We first define the underlying space $\Sigma$ where our pieces of trajectories will live. We let $\mathcal{K}$ be the set of nearest-neighbor paths in $A_2^C$ with one endpoint in $\partial A_1$ and the other in $V$,
\begin{equation}
\mathcal{K} := \big\{(x_0,x_1,\dots , x_n);\space n\in\N,\space x_i\in A_2^C\text{ for $1\leq i\leq n$},\space x_0\in\partial A_1,\space x_n\in V\big\}.
\end{equation}
\begin{figure}[ht]
\centering
\includegraphics[scale = .8]{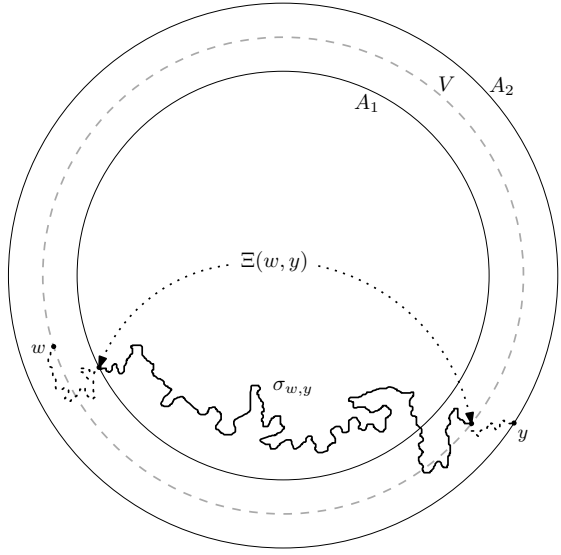}
\vspace{0.5cm}
\caption{The definition of $\sigma(w,y)$ and $\Xi(w,y)$.}
\label{processdeffig3}
\end{figure}
We introduce yet another artificial state $\Theta$ for reasons that will be made clear in a few moments. We let $\Sigma := \mathcal{K}\cup \{\Theta\} $ and let $\mu$ be a measure on $\Sigma$ defined in the following way: given $A\subseteq \Sigma$,
\begin{equation}
\mu(A):= \sum_{(x_0,\dots,x_n)\in A}\mathbb{P}_{(x_0,x_n)}[X_0=x_0,\dots,X_n=x_n]+1_{\{\Theta\in A\}},
\end{equation}
where $\mathbb{P}_{(x_0,x_n)}$ is the simple random walk measure 
conditioned on the event where $x_0$ is the walk's initial point and $x_n$ is its last point on $V$ before reaching $\partial A_2$. Notice that~$\mu(\{\Theta\})=1$.

Given $(w,y)\in V\times \partial A_2$ we let $\mathbb{P}_{w,y}$ be the measure associated with simple random walk starting at $w$ conditioned on the event where $y$ is the first point the walk hits in~$\partial A_2$, that is:
\begin{equation}
\mathbb{P}_{w,y}[\cdot]:=\mathbb{P}_{w}[\text{ }\space\cdot\mid X_{H_{\partial A_2}}=y]
\end{equation}
We want to randomly select (according to the conditional simple random walk measure above) a piece of trajectory in $A_1$ given a 
starting point in $V$ and an ending point in $\partial A_2$. Given $w\in V$ and $y\in\partial A_2$ we define the random element $\sigma_{w,y}\in\Sigma$ in the following way:
\begin{itemize}
\item Let $\mathcal{B}_{w,y}$ be a Bernoulli random variable with parameter $\mathbb{P}_{w,y}[H_{\partial A_1}<H_{\partial A_2}]$.
\item If $\mathcal{B}_{w,y}=0$ we let $\sigma_{w,y}\equiv\Theta$. 
\item If $\mathcal{B}_{w,y}=1$ we let, for $\mathfrak{A}\subseteq\mathcal{K}$:
\begin{equation}
\mathbb{P}[\sigma_{w,y}\in \mathfrak{A}]=\sum_{(a_0,\dots,a_n)\in \mathfrak{A}}\mathbb{P}_{w,y}\left[\begin{array}{c}X_{H_{A_1}}=a_0,X_{H_{A_1}+1}=a_1,\dots,X_{H_{A_1}+n}=a_n,\\X_k\notin A_1\text{ for every }k=H_{A_1}+n+1,\dots,H_{A_2}\end{array}\right].
\end{equation}
\end{itemize}
In other words, the random element $\sigma_{w,y}\in\Sigma$ will either be $\Theta$, on the event where a random walk starting at $w$ and exiting at $y$ fails to reach $A_1$, or a simple random walk trajectory~$(x_{0}^{w,y},x_{1}^{w,y},\dots , x_{k(w,y)}^{w,y})\in\mathcal{K}$ 
distributed so that $x_{0}^{w,y}$ is the first point in $A_1$ after the start at $w$ and $x_{k(w,y)}^{w,y}$ is the last point 
in $V$ before reaching $y\in\partial A_2$. We then define~$g_{(w,y)}:\Sigma\mapsto\R_+$ to be the $\mu$-density of $\sigma_{w,y}$. We refer to Figure~$\ref{processdeffig3}$.

Given $z=(x_0,\dots,x_n)\in \mathcal{K}$ we denote by $\Xi(z)$ the pair~$(x_0,x_n)$, the path's starting and ending points. We also let~$\Xi(\Theta)=\Theta$ so that $\Xi(z)$ is defined for all~$z\in \Sigma$. For $(w,y)\in V\times \partial A_2$ we define~$\Xi(w,y)$ to be the random element~$\Xi(\sigma_{w,y})$. 

Let us calculate~$g_{(w,y)}$ using the above notation. For $\mathfrak{A}\subseteq\Sigma$ we want to express the probability 
$\mathbb{P}[\sigma_{w,y}\in \mathfrak{A}]$ as a~$\mu$-integral over $\mathfrak{A}$.
\begin{equation}
\begin{array}{e}
 \mathbb{P}[\sigma_{w,y}\in \mathfrak{A}] & = & \sum_{a\in \mathfrak{A}}\mathbb{P}[\sigma_{w,y}=a]\\
  & = & 
  
  1_{\{\Theta\in \mathfrak{A}\}}\mathbb{P}_{w,y}[\Xi(w,y)=\Theta]\\ \\
  &   &+
  \sum_{\substack{a\in \mathfrak{A} \\ a\neq \Theta}} \mathbb{P}_{w,y}[\Xi(w,y)=\Xi(a)] \mathbb{P}_{w,y}[a\mid \Xi(w,y)=\Xi(a)]\\
  
  & = &
  
  1_{\{\Theta\in \mathfrak{A}\}}\mathbb{P}_{w,y}[\Xi(w,y)=\Theta]+\sum_{\substack{a\in \mathfrak{A} \\ a\neq \Theta}} \mathbb{P}_{w,y}[\Xi(w,y)=\Xi(a)] \mathbb{P}_{\Xi(a)}[a]\\

  & = & \sum_{a\in \mathfrak{A}}\mathbb{P}_{w,y}[\Xi(w,y)=\Xi(a)]\mu(a) \\
  
  & = &
  
    \int_\mathfrak{A} \mathbb{P}_{w,y}[\Xi(w,y)=\Xi(z)]\mu(\d z),
 \end{array}
\end{equation}
so that $g_{(w,y)}(z)= \mathbb{P}_{w,y}[\Xi(w,y)=\Xi(z)]$. 
Notice that the function $g_{(w,y)}(z)$ only depends on the pair $\Xi (z)$, the path's initial and ending points.

Let $(L,\mathcal{D},\mathbb{Q})$ be the measure space of the Poisson point process on $\Sigma\times\R_{+}$ with intensity measure $\mu\otimes\d v$, where $\d v$ is the Lebesgue measure on $\R_{+}$. 
A weighted sum of functions $g_{(\cdot,\cdot)}$ indexed by clothesline processes $((W_k,Y_k))_{k\geq 1}$ will be the soft local time used to simulate the pieces of trajectory we need. This way we will be able to simulate the intersection of a simple random walk trajectory with $A_1$. 
As we have seen in the random interlacements process's definition, to simulate the interlacements set inside $V$ we need a number $N^u_V\eqd Poisson(u\capacity (V))$ of independent random walks. We will need the same number of independent clothesline processes. 
For such task we will need a much bigger probability space, easily definable as a product between the Poisson point process space and an infinite product of independent simple random walk spaces starting on $V$. 
We call this bigger space the global probability space, and denote 
by $\GP$ its probability measure, which we will call the `global probability'.

Given a clothesline process $((W_k,Y_k))_{k\geq 1}$, we define the trajectory's soft local time:
\begin{equation}
\label{sltclothdef}
G(z)=\sum_{k=1}^{T_\Delta}\xi_k g_{(W_k,Y_k)}(z).
\end{equation}
We will also need to consider the soft local time up to a 
random time $T\leq T_\Delta$:
\begin{equation}
G_T(z)=\sum_{k=1}^{T}\xi_k g_{(W_k,Y_k)}(z).
\end{equation}
Analogously, we define for any deterministic time $n\geq 1$
\begin{equation}
G_n(z)=\sum_{k=1}^{n}\xi_k g_{(W_k,Y_k)}(z).
\end{equation}
We denote by $z_k$ the piece of trajectory randomly selected by the $k$-th soft local time,~$G_k$.

As we have seen before, in order to simulate the random interlacements set at level~$u$ in~$A_1$, we actually need a
\begin{equation*}
N^u_V\eqd\textit{Poisson}(u\capacity (V))
\end{equation*}
number of random walk trajectories, each started at a point in $V$ distributed as $\overline e_V (\cdot)$. For~$j=1,\dots,N^u_V$ we let~$((W_{k}^j,Y_{k}^j))_{k\geq 1}$ be a clothesline process started at $W_{1}^j$, so that $((W_{k}^j,Y_{k}^j))_{k\geq 1}$ 
is independent from $((W_{k}^i,Y_{k}^i))_{k\geq 1}$ for $i\neq j$, and so that $W_{1}^j$ is distributed as $\overline e_V (\cdot)$.
 Let $T_{\Delta}^j$ be the killing time associated with $((W_{k}^j,Y_{k}^j))_{k\geq 1}$. We denote by
\begin{equation}
\label{e_sltdef1}
G^j(z)=\sum_{k=1}^{T_{\Delta}^j}\xi_{k}^j g_{(W_{k}^j,Y_{k}^j)}(z)
\end{equation}
the soft local time associated with the $j$-th clothesline process. It should be clear from Proposition $\ref{p_decslt}$ that we can simulate all the random elements $(\sigma_{W_{k}^j,Y_{k}^j})_{j,k\geq 1}$ 
at the same time using only one realization of a Poisson point process in $\Sigma\times\R_+$. As the Corollary $\ref{c_couplez}$ shows, in order to control the values our random elements take we only need to control the function
\begin{equation}
\label{e_sltdef2}
G^{\Sigma}_{u}(z)=\sum_{j=1}^{N^u_V}G^j(z),
\end{equation}
the soft local time associated with the whole process. With such objective in mind we for now set our goals at estimating the 
soft local time's moments. We first show an easier way to express the expectation of $G(z)$.

\begin{proposition}
\label{t_expslt}
Using the same notation as above, we have
\begin{equation}
 \IE (G(z))=\IE\Big(\sum_{k=1}^{T_\Delta}1_{ \{\Xi(X_{D_{k-1}},X_{R_k})=\Xi(z)\}}\Big).
\end{equation}
\end{proposition}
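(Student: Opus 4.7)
The plan is to expand $G(z)=\sum_{k=1}^{T_\Delta}\xi_k g_{(W_k,Y_k)}(z)$ directly, interchange expectation and sum, and then reduce each term to a conditional probability by separating the role of the Poisson point process $\eta$ (which produces the $\xi_k$) from that of the independent clothesline process $((W_k,Y_k))_{k\geq 1}$ (which is derived from the underlying simple random walk). With the natural convention that $g_\Delta\equiv 0$, the contribution of the index $k=T_\Delta$ vanishes, so by Tonelli (all terms are non-negative)
\[
\IE G(z) \;=\; \sum_{k\geq 1} \IE\bigl[\xi_k\, g_{(W_k,Y_k)}(z)\,\mathbf{1}_{\{k<T_\Delta\}}\bigr].
\]

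First I would argue that $\IE[\xi_k\mid (W_j,Y_j)_{j\leq k}]=1$. Since the PPP $\eta$ is independent of the clothesline, one may iterate part~(iii) of Proposition~\ref{p_simslt}: conditionally on $(W_j,Y_j)_{j\leq k}$, the residual process $\m_{k-1}$ is still Poisson with intensity $\mu\otimes\d v$, so by part~(ii) of the same proposition, $\xi_k$ is $\mathrm{Exp}(1)$ given $(W_j,Y_j)_{j\leq k}$. Now both $g_{(W_k,Y_k)}(z)$ and $\mathbf{1}_{\{k<T_\Delta\}}=\mathbf{1}_{\{(W_k,Y_k)\neq\Delta\}}$ are measurable with respect to this $\sigma$-algebra, so conditioning yields
\[
\IE\bigl[\xi_k\, g_{(W_k,Y_k)}(z)\,\mathbf{1}_{\{k<T_\Delta\}}\bigr] \;=\; \IE\bigl[g_{(W_k,Y_k)}(z)\,\mathbf{1}_{\{k<T_\Delta\}}\bigr].
\]

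Next I would identify $g_{(W_k,Y_k)}(z)$ with the conditional probability of the event $\{\Xi(X_{D_{k-1}},X_{R_k})=\Xi(z)\}$. By its definition, $g_{(w,y)}(z)=\mathbb{P}_{w,y}[\Xi(\sigma_{w,y})=\Xi(z)]$, where $\sigma_{w,y}$ is the idealized excursion from $w\in V$ conditioned to first hit $\partial A_2$ at $y$ (returning $\Theta$ if it avoids $A_1$). Applying the strong Markov property of the SRW at time $D_{k-1}$, on $\{k<T_\Delta\}$ the trajectory $(X_{D_{k-1}},X_{D_{k-1}+1},\dots,X_{R_k})$ conditioned on its endpoints $(W_k,Y_k)$ is distributed exactly as $\sigma_{W_k,Y_k}$. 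Hence
\[
g_{(W_k,Y_k)}(z) \;=\; \IE\bigl[\mathbf{1}_{\{\Xi(X_{D_{k-1}},X_{R_k})=\Xi(z)\}}\,\bigm|\,W_k,Y_k\bigr]\mathbf{1}_{\{k<T_\Delta\}} + (\text{zero on the complement}),
\]
and taking expectations and summing over $k$ gives the claim.

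The only non-formal step is ensuring that $\xi_k$ is $\mathrm{Exp}(1)$ after conditioning on the entire clothesline $\sigma$-algebra; this is the main obstacle but it is handled cleanly by the independence between $\eta$ and the simple random walk together with an inductive use of Proposition~\ref{p_simslt}(iii). Everything else is bookkeeping: pushing the indicator $\mathbf{1}_{\{k<T_\Delta\}}$ through the calculation, and invoking the strong Markov property to match the excursion law with the target measure built into the soft local time.
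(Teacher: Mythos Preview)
Your proof is correct and follows essentially the same route as the paper's: drop the $\xi_k$ using $\IE[\xi_k\mid (W_j,Y_j)_{j\leq k}]=1$, then identify $g_{(W_k,Y_k)}(z)$ as the conditional probability that the $k$-th excursion has endpoints $\Xi(z)$ via the strong Markov property. The paper's proof is a terse four-line display that leaves both of these justifications implicit; you have simply spelled them out (and handled the boundary index $k=T_\Delta$ more carefully via the convention $g_\Delta\equiv 0$).
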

\begin{proof}
In fact,
\begin{equation}
\begin{array} {e}
 \IE (G(z)) & = & \IE\Big(\sum_{k=1}^{T_\Delta}g_{(W_k,Y_k)}(z)\Big)  =  \IE\Big(\sum_{k=1}^{T_\Delta}\IP_{W_k,Y_k}[\Xi(W_k,Y_k)=\Xi(z)]\Big) \\
 & = & \IE\Big(\sum_{k=1}^{T_\Delta}1_{ \{\Xi(W_k,Y_k)=\Xi(z)\}}\Big) = \IE\Big(\sum_{k=1}^{T_\Delta}1_{ \{\Xi(X_{D_{k-1}},X_{R_k})=\Xi(z)\}}\Big).
\end{array}
\end{equation}
\end{proof}
We have then that the expectation of $G(z)$, for $z\neq\Theta$, is the same as the expectation of how many times a random walk started at $W_1$ will do a excursion on $A_2^C$ with starting and ending points given by $\Xi(z)$. 

It is clear that the same computation works for any starting distribution for $W_1$. Given~$y\in\partial A_2$, we let $\beta_y(\cdot)$ be the hitting measure on $V$ of a simple random walk started at~$y$. 
We are then able to take $\beta_y(\cdot)$ as the starting distribution of $W_1$. Let then~$\GP_{\beta_y}$ be the global process's measure in which the clothesline process's starting distribution is given by $\beta_y(\cdot)$, and let $\IE_{\beta_y}$ be its associated expectation. 
We are then required to allow the clothesline process to start at the cemetery state $\Delta$, denoting the failure of the random walk trajectory started at $y$ to reach $V$. 
In an analogous definition, we let $\GP_{w_0}$ be the global process's measure with $w_0\in V$ as the clothesline process's starting point, and let $\IE_{w_0}$ be its associated expectation. 

The next proposition, adapted from Theorem~$4.8$ of \cite{PopovTeixeira}, gives a bound on the second moment $\IE(G(z))^2$.

\begin{proposition}
 \label{t_2ndmoment}
 For any $w_0 \in V$,
 \begin{equation}
   \IE_{w_0} \big(G(z) \big)^2 
  \leq 2 \IE_{w_0} \big(G(z)\big)
  \big( \sup_{w'\in V} \IE_{w'} G(z) +\sup_{w,y} g_{(w,y)}(z) \big) .
 \end{equation}
\end{proposition}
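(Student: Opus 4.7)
The plan is to expand $G(z)^2 = \bigl(\sum_{k=1}^{T_\Delta} \xi_k g_{(W_k,Y_k)}(z)\bigr)^2$ into diagonal and off-diagonal terms and bound each part separately. Recall from Proposition~\ref{p_simslt}(ii) that, when generating the excursions recursively via soft local times, each $\xi_k$ is $\mathrm{Exp}(1)$ and independent of the current state of the clothesline chain $((W_j,Y_j))_{j\le k}$ (and of the future $\xi_\ell$, $\ell\ne k$). Furthermore, $((W_k,Y_k))_{k\geq 1}$ is Markov as noted in the text, so given $\mathcal{F}_k := \sigma((W_j,Y_j)_{j\leq k})$, the sequence $(W_{k+j},Y_{k+j})_{j\geq 1}$ is distributed as a clothesline process from some starting point in $V$.

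For the diagonal contribution, using $\IE(\xi_k^2)=2$ and pulling out the supremum,
\begin{equation*}
\IE_{w_0}\sum_{k=1}^{T_\Delta}\xi_k^2 g_{(W_k,Y_k)}(z)^2
\leq 2\sup_{w,y} g_{(w,y)}(z)\cdot \IE_{w_0}\sum_{k=1}^{T_\Delta} g_{(W_k,Y_k)}(z)
= 2\sup_{w,y} g_{(w,y)}(z)\cdot \IE_{w_0} G(z),
\end{equation*}
where the last equality uses $\IE(\xi_k)=1$ together with independence between $\xi_k$ and $(W_k,Y_k)$.

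For the off-diagonal contribution, using independence of $\xi_k\xi_\ell$ from the chain and the Markov property,
\begin{align*}
2\sum_{k<\ell} \IE_{w_0}\bigl[\xi_k\xi_\ell g_{(W_k,Y_k)}(z) g_{(W_\ell,Y_\ell)}(z)\bigr]
&= 2\sum_k \IE_{w_0}\Bigl[ g_{(W_k,Y_k)}(z)\,\IE_{w_0}\Bigl[\sum_{\ell>k} g_{(W_\ell,Y_\ell)}(z)\,\Big|\,\mathcal{F}_k\Bigr]\Bigr]\\
&\leq 2\sup_{w'\in V}\IE_{w'} G(z)\cdot \IE_{w_0}\sum_k g_{(W_k,Y_k)}(z)\\
&= 2\sup_{w'\in V}\IE_{w'} G(z)\cdot \IE_{w_0} G(z).
\end{align*}
Summing the two estimates gives the desired bound.

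The step that requires the most care is the Markov step in the off-diagonal bound: after conditioning on $\mathcal{F}_k$, the remaining clothesline starts from some point $W_{k+1}$ whose distribution depends on $Y_k$, so strictly one has $\IE[\sum_{\ell>k} g_{(W_\ell,Y_\ell)}(z)\mid\mathcal{F}_k] = \IE_{W_{k+1}} G(z)$ (with the convention that this is $0$ if the walk is already killed by step $k$). Bounding this by $\sup_{w'\in V}\IE_{w'}G(z)$ pointwise, and similarly noting that the random variable $\xi_k$ is genuinely $\mathrm{Exp}(1)$ and independent of $\mathcal{F}_k$ by the recursive construction in Proposition~\ref{p_simslt}, is what makes the two one-line computations above rigorous.
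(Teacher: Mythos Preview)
Your proof is correct and follows essentially the same approach as the paper: expand the square, bound the diagonal using $\IE\xi_k^2=2$ and the supremum of $g_{(w,y)}(z)$, and bound the off-diagonal by conditioning on the Markov chain and using $\sup_{w'}\IE_{w'}G(z)$. The only cosmetic difference is that the paper first truncates to $G_n$ and passes to the limit by monotone convergence, whereas you work directly with $G=G_{T_\Delta}$; since all terms are nonnegative, Tonelli justifies your interchanges of sums and expectations.
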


\begin{proof}
 Recall that the second moment of a $\Exp(1)$ random variable equals $2$. For $z \in \Sigma$ and $n \geq 1$, we write
 \begin{align*}
   \IE_{w_0} \big( G_n(z) \big)^2 & 
= \IE_{w_0} \Big( \sum_{k=1}^n \xi_k g_{(W_k,Y_k)}(z) \Big)^2 \\
   & = \IE_{w_0} \Big( \sum_{k=1}^n \xi_k^2 g_{(W_k,Y_k)}^{2}(z) \Big) 
+ \IE_{w_0} \Big(2 \sum_{k < k' \leq n} 
\xi_k \xi_{k'} g_{(W_k,Y_k)}(z) g_{(W_{k'},Y_{k'})}(z) \Big)\\
   & \leq \sum_{k=1}^n \IE\xi_k^2 \sup_{w,y} g_{(w,y)}(z) \IE_{w_0}
 g_{(W_k,Y_k)}(z) + 2 \sum_{k=1}^{n-1} \sum_{k'=k+1}^{n}
 \IE_{w_0} \big(g_{(W_k,Y_k)}(z) g_{(W_{k'},Y_{k'})}(z) \big) \\
   & \leq 2 \sup_{w,y} g_{(w,y)}(z) \IE_{w_0} G_n(z) + 2
 \sum_{k=1}^{n-1} \sum_{k'=k+1}^{n} \IE_{w_0} \big( g_{(W_k,Y_k)}(z)
 \IE_{w_0} (g_{(W_k',Y_k')}(z)\mid W_k,Y_k ) \big)\\
   & \leq 2 \sup_{w,y} g_{(w,y)}(z) \IE_{w_0}
 G_n(z) + 2 \sum_{k=1}^{n-1} \IE_{w_0} 
\Big( g_{(W_k,Y_k)}(z) \IE_{\beta_{Y_{k}}} \Big( \sum_{m=1}^{n-k}
 g_{(W_m,Y_m)}(z) \Big) \Big)\\
& \leq 2 \sup_{w,y} g_{(w,y)}(z)  \IE_{w_0}
 G_n(z) + 2 \sup_{w'}\IE_{w'} \Big( \sum_{m=1}^{n-k}
 g_{(W_m,Y_m)}(z) \Big)
\IE_{w_0} \Big(\sum_{k=1}^{n-1} 
g_{(W_k,Y_k)}(z) \Big)\\
&\leq 2 \IE_{w_0} \big(G_n(z)\big)
\big( \sup_{w'} \IE_{w'} G_n(z) +\sup_{w,y} g_{(w,y)}(z) \big),
 \end{align*}
so that the result is proved for time~$n$. 
Letting~$n$ go to infinity, by the 
monotone convergence theorem we  can prove the result 
for the stopping time $T_\Delta$.
\end{proof}

For this paper's results, an estimate on the exponential moments of~$G$ will be essential. The next proposition, 
again adapted from \cite{PopovTeixeira} 
(propositions $\ref{t_expmoment}$ and $\ref{t_2ndmoment}$ are proved in the context of Markov chains in the original paper), gives us such an estimate. 

\begin{proposition}
 \label{t_expmoment}
 Given $\hat z \in \Sigma$ and measurable $\Gamma \subset \Sigma$, let
 \begin{equation}
  \begin{split}
  \alpha & = \inf\Big\{\frac{g_{(w,y)}(z')}{g_{(w,y)}({\hat z})}; 
  (w,y) \in V\times\partial A_2, z' \in \Gamma,\hat z \in \mathcal{K}\Big\},\\
  N(\Gamma) & = \#\{k \leq T_\Delta; z_k \in \Gamma\} ,
\text{ and}\\
  \ell & \geq \; \smash{\sup_{(w,y) \in V\times\partial A_2}} \; g_{(w,y)}(\hat z).
  \end{split}
 \end{equation} 
Then, for any $v \geq 2$,
\begin{equation}
\label{e_boundexpslt}
  \GP[G({\hat z}) \geq v \ell] 
\leq \GP[G({\hat z}) \geq \ell] 
\Big( \exp \big\{-\big(\tfrac v2 - 1 \big) \big\} + 
\sup_{w'}\GP_{w'} \big[ \eta( \Gamma 
\times [0, \tfrac{1}{2} v \ell\alpha] ) \leq N(\Gamma) \big]
%P[Y \leq N(T_\Delta)] 
\Big)
 \end{equation}
(note that $\eta( \Gamma 
\times [0, \tfrac{1}{2} v \ell\alpha] )$ 
is a random variable with distribution 
$\textnormal{Poisson} \big( \tfrac{1}{2}v \ell \alpha\mu(\Gamma) \big)$).
\end{proposition}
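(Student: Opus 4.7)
The plan is a regeneration-and-split at the first step when the soft local time at $\hat z$ reaches level $\ell$, combined with the observation that whenever $G(\hat z)$ is large, the soft local time staircase has already absorbed all Poisson points in $\Gamma$ below a comparable height.

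First, I would introduce the stopping time $\tau := \inf\{k \geq 1 : G_k(\hat z) \geq \ell\}$ (with $\inf\emptyset = \infty$), so that $\{\tau \leq T_\Delta\} = \{G(\hat z) \geq \ell\}$, and write $G = G_\tau + \tilde G$. Iterating Proposition~\ref{p_simslt} up to step $\tau$, conditional on $\mathcal{F}_\tau$ and on $\{\tau \leq T_\Delta\}$ the overshoot
\[
\xi^{\ast}_\tau := \xi_\tau - \frac{\ell - G_{\tau-1}(\hat z)}{g_{(W_\tau, Y_\tau)}(\hat z)}
\]
is $\mathrm{Exp}(1)$-distributed by the memoryless property, while $\tilde G$ has the law of a fresh soft local time whose underlying clothesline restarts at $Y_\tau$ with entrance distribution $\beta_{Y_\tau}$. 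Using $g_{(W_\tau, Y_\tau)}(\hat z) \leq \ell$, one has $G_\tau(\hat z) = \ell + \xi^{\ast}_\tau g_{(W_\tau, Y_\tau)}(\hat z) \leq \ell(1 + \xi^{\ast}_\tau)$.

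Next, I would split according to whether $\xi^{\ast}_\tau \geq v/2 - 1$: on $\{\xi^{\ast}_\tau < v/2 - 1\}$ the previous bound forces $G_\tau(\hat z) < v\ell/2$, hence $\tilde G(\hat z) > v\ell/2$ on $\{G(\hat z) \geq v\ell\}$. This gives
\begin{align*}
\GP[G(\hat z) \geq v\ell] &\leq \GP[\tau \leq T_\Delta,\, \xi^{\ast}_\tau \geq v/2-1] + \GP[\tau \leq T_\Delta,\, \tilde G(\hat z) > v\ell/2] \\
&\leq \GP[\tau \leq T_\Delta]\cdot\Big(e^{-(v/2-1)} + \sup_{w'}\GP_{w'}[G(\hat z) \geq v\ell/2]\Big),
\end{align*}
using the independence of $\xi^{\ast}_\tau$ from $\{\tau \leq T_\Delta\}$ for the first piece and the regeneration of $\tilde G$ for the second.

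Finally, to convert $\{G(\hat z) \geq v\ell/2\}$ into the Poisson event of the statement: by the definition of $\alpha$, $G(z') \geq \alpha G(\hat z)$ for every $z' \in \Gamma$, so on this event $G_{T_\Delta}(z') \geq v\ell\alpha/2$ throughout $\Gamma$. The soft local time construction forces every Poisson point $(z_\lambda, v_\lambda)$ with $v_\lambda \leq G_{T_\Delta}(z_\lambda)$ to be selected at some step $k \leq T_\Delta$ (otherwise its shifted coordinate $v_\lambda - G_{T_\Delta}(z_\lambda)$ in the leftover Poisson process would be negative, contradicting Proposition~\ref{p_simslt}), so every such point with $z_\lambda \in \Gamma$ contributes to $N(\Gamma)$. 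This gives $\eta(\Gamma \times [0, v\ell\alpha/2]) \leq N(\Gamma)$ on the event. Substituting and using $\GP[\tau \leq T_\Delta] = \GP[G(\hat z) \geq \ell]$ yields the claimed inequality. The main obstacle is the first step: the memoryless regeneration at $\tau$ needs a careful iteration of Proposition~\ref{p_simslt} to identify the leftover Poisson process up to the random time $\tau$; once that is in place, the rest is bookkeeping.
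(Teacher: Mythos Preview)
Your proposal is correct and follows essentially the same route as the paper's proof: the paper likewise introduces the stopping time $T_\ell = \tau$, splits $\{G(\hat z)\ge v\ell\}$ into an overshoot piece (handled via the memoryless property of the exponentials, giving the $e^{-(v/2-1)}$ term) and a post-$T_\ell$ increment piece (handled by the Markov restart of the clothesline, giving the $\sup_{w'}$ term), and then converts $\{G(\hat z)\ge v\ell/2\}$ into the Poisson event via the inequality $G(z')\ge \alpha\,G(\hat z)\,\mathbf 1_\Gamma(z')$. Your formulation with the explicit overshoot variable $\xi^\ast_\tau$ is a slightly cleaner packaging of the same memoryless computation the paper does by summing over the level $n$ at which $T_\ell$ occurs.
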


The number $\alpha=\alpha(\Gamma)$ above gives us a regularity condition: whenever $\alpha$ is uniformly larger than some constant $c>0$, we have that the density function $g_{(w,y)}(\cdot)$ when restricted to the subset $\Gamma$ cannot vary too much. 

We first explain the intuition behind the terms in the right-hand side of ~$(\ref{e_boundexpslt})$. The first term in the product is explained by the fact that in order for $G(\hat z)$ 
to get past $v \ell$, it must first overcome $\ell$. The first summand inside the parenthesis corresponds to the probability that the sum $G(\hat z)$ 
overcomes $\ell$ at the same ``time'' it overcomes $v \ell 2^{-1}$, that is, a overshooting probability. The second summand corresponds to a large deviation estimate, and generally, as $v$ grows, $N(\Gamma)$ becomes much smaller than the expected value of~$\eta(\Gamma \times [0, \tfrac{1}{2}v\ell\alpha])$. 

\begin{proof}
 We define the stopping time (with respect to the filtration $\mathcal{F}_n = \sigma((W_k,Y_k), \xi_k, k \leq n))$
 \begin{equation}
  T_\ell = \inf\{ k \geq 1; G_k({\hat z}) \geq \ell\}.
 \end{equation}
For $v \geq 2$, we have
\begin{align}
\lefteqn{\GP[G ({\hat z}) \geq v \ell]}\phantom{*************************************************}\nonumber \\
  \leq \GP \big[T_\ell < \infty, \; 
G_{T_\ell}({\hat z}) \geq \tfrac v2 \ell \big] + 
\GP \big[T_\ell <  \infty, \; G_{T_\ell}({\hat z}) < \tfrac v2 \ell, \; G({\hat z}) - G_{T_\ell}({\hat z}) > \tfrac v2 \ell \big] \label{e:Qtwoterms}
\end{align}
(note that $\GP[G({\hat z})\geq \ell]
=\GP[T_\ell<\infty]$).
 We first estimate the first term in the right side of the above inequality. By the memoryless property of the exponential distribution, we have
\begin{align}
 \label{e:Qfirstterm}
 \nonumber
  \smash{\sum_{n \geq 1}} \IE & \Big( G_{n-1}({\hat z}) < \ell,  
\GP \big[ \xi_n g_{(W_n,Y_n)}({\hat z})
 > \tfrac v2 \ell - G_{n-1} ({\hat z}) \mid W_{n-1}, Y_{n-1}, G_{n-1} \big] \Big)\\
 \nonumber
  & \leq \sum_{n \geq 1} \IE \Big( G_{n-1}({\hat z}) < \ell, 
\GP[ \xi_1 g_{(W_n,Y_n)}({\hat z}) > \ell - G_{n-1}] \, \GP 
\big[\xi_1 g_{(W_n,Y_n)}({\hat z}) > \big( \tfrac v2 -1 \big) \ell \big] \Big)\\
  & \leq \GP[T_{\ell} < \infty] 
\sup_{(w',y')} \GP\big[\xi_1 g_{(w',y')}({\hat z}) 
> \big( \tfrac v2 -1 \big) \ell \big]\\
 \nonumber
  & \leq \GP[T_{\ell} < \infty] 
\exp \big\{-\big(\tfrac v2 - 1 \big) \big\}.
\end{align} 
Now, to bound the second term in the right side of~\eqref{e:Qtwoterms}, we write
 \begin{equation}
 \label{e:Qsecterm}
  \begin{split}
   \IE & \big(T_\ell <  \infty, \; 
G_{T_\ell}(\hat z) < \tfrac v2 \ell, 
 \GP [G({\hat z}) - G_{T_\ell}({\hat z}) 
> \tfrac v2 \ell \mid G_1, \dots, G_{T_\ell}]\big)\\
   & \leq \GP \big[T_\ell <  \infty \big] \; \smash{\sup_{w'}}
 \; \GP_{w'} [G({\hat z}) > \tfrac v2 \ell ].
  \end{split}
 \end{equation}
Using that for any $z' \in \Sigma$
 \begin{equation}
  G(z') = \sum_{k=1}^{T_\Delta} \xi_k g_{(W_k,Y_k)}(z') \geq \sum_{k=1}^{T_\Delta} \alpha \xi_k g_{(W_k,Y_k)}({\hat z}) \1{\Gamma}(z') = \alpha G({\hat z}) \1{\Gamma}(z').
 \end{equation}
 we obtain, for all~$z'$,
 \begin{equation}
 \label{e:GbyPoisson}
  \begin{split}
   \GP\big[G({\hat z}) \geq \tfrac v2 \ell \big] 
& \leq \GP \Big[ G(z') \geq \frac{1}{2 }v \ell\alpha, \text{ for every $z' \in \Gamma$} \Big]\\
   & \leq \GP \big[ \eta( \Gamma 
\times [0, \tfrac{1}{2} v \ell\alpha] ) \leq N(\Gamma) \big].
  \end{split}
 \end{equation}
Collecting \eqref{e:Qtwoterms}, \eqref{e:Qfirstterm}, \eqref{e:Qsecterm} 
and \eqref{e:GbyPoisson} we finish the proof of the result.
\end{proof}

\section{Conditional decoupling}
\label{s_cd}
We begin this section gathering some facts needed for the proof of the main theorem of this paper. But first we give an overview of main argument presented in this section. 
We will simulate the random interlacements set intersected with $A_1$ in two ways. In the first way we will simulate $\I_{A_1}^u$ using $G^{\Sigma}_u$, that is, we will simulate $\I_{A_1}^u$ using the soft local times indexed by the clothesline processes.
 In the second way, we will construct a set made up from random walk trajectories in $A_1$ in a similar way to the construction  of~$\I^u_{A_1}$, the only difference will be that the soft local times used in this second construction will be indexed by a given nonrandom sequence $\hat{\zeta}$ of pairs of points belonging $V\times \partial A_2$. 
We will denote this second random set by~$\I^u_{A_1\mid\hat{\zeta}}$, and we will show using the soft local times method that~$\I^u_{A_1\mid\hat{\zeta}}$ and~$\I^u_{A_1}$ are usually very similar to each other. We then prove a similar result when 
the pairs of points that constitute the nonrandom sequence all belong to the boundary of a set contained in~$A_2$. 

Throughout this section we will again only differentiate between $A_{1}^{\tiny\Circle}$ and $A_{1}^{\tiny\Square}$ when the need arises. We start by stating the following bound
\begin{equation}\label{supprob2}
\sup_{\substack{w'\in V \\ y'\in \partial A_2}}\IP_{w',y'}\big{[} \Xi(w',y')=(w_0,y_0) \big{]}\leq c s ^{-2(d-1)} ,\end{equation}
for which the proof is technical and we thus postpone it to subsection $\ref{s_t1}$ of the appendix. 

Let $z\in\Sigma$ be such that~$\Xi(z)=(w_0,y_0)$, and let $h:=\dist(w_0,y_0)$. We let $F(w_0,y_0)$ stand for $G(z)$, making explicit the dependence of the soft local time on the endvertices~$\Xi(z)$. We define
\begin{equation}
\label{e_expecdef}
\pi(w_0,y_0):=\IE(F(w_0,y_0)).
\end{equation}
We define $f_{A_1}(w_0,y_0)$ to be the probability that the simple random walk started at $w_0$ visits $y_0$ before hitting $A_2$. We will prove in the appendix  (see Section~$\ref{s_t1}$, propositions~$\ref{p_boundprocircle}$ and~$\ref{p_boundprosquare}$) the following bounds for these probabilities:
\begin{itemize}
\item[(i)]Given $(w_0,y_0)\in A_1^{\tiny\Circle}\times V^{\tiny\Circle}$, there are constants $c_1,c_2>0$ such that
\begin{equation}
\label{e_circlebound}
c_1\frac{s^{2}}{h^{d}} \leq f_{A_1^{\tiny\Circle}}(w_0,y_0) \leq c_2\frac{s^{2}}{h^{d}}.
\end{equation} 
\item[(ii)]Let $(w_0,y_0)\in A_1^{\tiny\Square}\times V^{\tiny\Square}$, and recall the definition of $\mathfrak{H}_{r+2s}$, the unsmoothed version of $ {A_2^{\tiny\Square}}^C$. Let $\mathfrak{H}^{d-1}_i$; $i=1,\dots,2d$; denote the $(d-1)$-dimensional hyperfaces of~$\mathfrak{H}_{r+2s}$, and let $l^{w_0}_i:=\min \{\dist(w_0,\mathfrak{H}^{d-1}_i),h\}$, and $l^{y_0}_i:=\min \{\dist(y_0,\mathfrak{H}^{d-1}_i),h\}$. Then there are constants $c_1,c_2>0$ such that
\begin{equation}
\label{e_squarebound}
c_1\frac{l^{w_0}_1\dots l^{w_0}_{2d}}{h^{2d}}\cdot\frac{1}{h^{d-2}}\cdot\frac{l^{y_0}_1\dots l^{y_0}_{2d}}{h^{2d}}\leq f_{A_1^{\tiny\Square}}(w_0,y_0) \leq c_2\frac{l^{w_0}_1\dots l^{w_0}_{2d}}{h^{2d}}\cdot\frac{1}{h^{d-2}}\cdot\frac{l^{y_0}_1\dots l^{y_0}_{2d}}{h^{2d}}.
\end{equation}
\end{itemize}

The following lemma, whose proof we also postpone to the appendix (Section~$\ref{s_t2}$), gives us an estimate on $\pi(w_0,y_0)$.

\begin{lemma}
\label{l_expecslt}
Using the notation defined above we have, for constants $c_1,c_2,c_3,c_4>0$:\\
\begin{itemize}
 \item[(i)]$c_1\capacity(V)^{-1}s^{-1}f_{A_1}(w_0,y_0)\leq\pi_{A_1}(w_0,y_0)\leq c_2\capacity(V)^{-1}s^{-1} f_{A_1}(w_0,y_0)$,
 \\
 \item[(ii)]$\IE(F(w_0,y_0)^2)\leq c_3 \capacity(V)^{-1}s^{-2d+2}f_{A_1}(w_0,y_0)$.
  \\
  \\
   Moreover, since $\dist (w_0,y_0)\geq s$, we have
   \\
 \item[(iii)]$\sup_{w_0,y_0}\pi(w_0,y_0)\leq c_4\capacity(V)^{-1}s^{-(d-1)}$.
 \end{itemize}
\end{lemma}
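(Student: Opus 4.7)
The plan is to apply Proposition~\ref{t_expslt} to rewrite $\pi(w_0,y_0)=\IE(F(w_0,y_0))$ as the expected number of excursions in the clothesline process whose $\Xi$-endpoints equal $(w_0,y_0)$. Applying the strong Markov property of the underlying simple random walk at the first entrance into $A_1$ factors this count as
\[
\pi(w_0,y_0) \;=\; M(w_0)\cdot\IP_{w_0}\bigl[X_{L_V}=y_0\bigr],
\]
where $M(w_0):=\IE\bigl[\sum_{k=1}^{T_\Delta-1}\IP_{W_k}[H_{A_1}<H_{\partial A_2},\,X_{H_{A_1}}=w_0]\bigr]$ is the expected number of excursions with first $A_1$-hit at $w_0$, and $L_V$ denotes the last visit to $V$ before $H_{\partial A_2}$.

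For the second factor I would use a last-exit decomposition plus strong Markov at the final $V$-visit, giving $\IP_{w_0}[X_{L_V}=y_0]=G^*(w_0,y_0)\cdot\IP_{y_0}[\tilde H_V>H_{\partial A_2}]$ with $G^*:=G_{\partial A_2}$ the $\partial A_2$-killed Green's function. The standard first-hit identity $G^*(w_0,y_0)=f_{A_1}(w_0,y_0)\cdot G^*(y_0,y_0)$ combined with $G^*(y_0,y_0)\asymp 1$ (since $d\geq 3$) gives $G^*(w_0,y_0)\asymp f_{A_1}(w_0,y_0)$, and the escape probability $\IP_{y_0}[\tilde H_V>H_{\partial A_2}]$ is estimated by a gambler's-ruin / harmonic-function argument in the annular region of width~$s$ separating $V$ from $\partial A_2$. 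The sharp estimate of $M(w_0)$ is the main technical obstacle: here I would exploit the identity $\IE_{\overline{e}_V}[\#\{n:X_n=w_0\}]=\capacity(V)^{-1}$ (which follows from $G\,e_V(w_0)=\IP_{w_0}[H_V<\infty]=1$, since a walk from $w_0\in A_1$ must cross~$V$ to reach $\partial A_2$, together with symmetry of $G$), decomposing the total visits to $w_0$ over the excursion structure via the identity $G^*(w,w_0)=\IP_w[H_{w_0}<H_{\partial A_2}]\,G^*(w_0,w_0)$, and controlling the off-diagonal contributions $\sum_{w_1\neq w_0}\IP_{w_1}[H_{w_0}<H_{\partial A_2}]$ through sharp killed-Green's-function estimates on the ball/box $A_2^C$.

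Finally, parts~(ii) and~(iii) follow more routinely. For~(iii), since $\dist(w_0,y_0)\geq s$, both~\eqref{e_circlebound} and~\eqref{e_squarebound} imply $f_{A_1}(w_0,y_0)\leq c\,s^{-(d-2)}$, which combined with~(i) yields the claimed uniform bound on~$\pi$. For~(ii), I would apply Proposition~\ref{t_2ndmoment}, which bounds $\IE(F^2)\leq 2\IE(F)\bigl(\sup_{w'}\IE_{w'}G(z)+\sup_{w,y}g_{(w,y)}(z)\bigr)$; the first supremum is controlled by~(iii) and the second by~\eqref{supprob2}, after which a direct calculation (checking that both resulting terms are dominated by $c\,\capacity(V)^{-1}s^{-(2d-2)}f_{A_1}(w_0,y_0)$ under the scaling $r\asymp s^{b_{A_1}}$) gives the claim.
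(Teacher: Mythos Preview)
Your factorization $\pi(w_0,y_0)=M(w_0)\cdot\IP_{w_0}[X_{L_V}=y_0]$ is valid, and your treatment of the second factor via last-exit decomposition (giving $\asymp s^{-1}f_{A_1}(w_0,y_0)$) is correct. The difficulty is the lower bound for $M(w_0)$. Your proposed route writes $\capacity(V)^{-1}=\IE_{\overline e_V}[\#\text{visits to }w_0]=\sum_{w_1\in\partial A_1}M(w_1)\,G_{\partial A_2}(w_1,w_0)$ and then tries to isolate the diagonal term by bounding $\sum_{w_1\neq w_0}\IP_{w_1}[H_{w_0}<H_{\partial A_2}]$. But this off-diagonal sum is \emph{not} $O(1)$: for $w_1\in\partial A_1$ at distance $h\leq s$ from $w_0$ one has $\IP_{w_1}[H_{w_0}<H_{\partial A_2}]\asymp h^{-(d-2)}$, and there are $\asymp h^{d-2}$ such points on the $(d-1)$-dimensional surface $\partial A_1$, so the sum over $h\leq s$ already contributes order $s$. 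Combined with the crude bound $M(w_1)\leq\capacity(V)^{-1}$, the off-diagonal piece swamps the diagonal, and you do not recover $M(w_0)\geq c\,\capacity(V)^{-1}$. The argument as sketched has a genuine gap here.

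The paper avoids this entirely by a different device: it passes to a larger sphere $\tilde V:=\partial B(0,3(r+s))$ and uses the interlacement compatibility $\capacity(\tilde V)\,\tilde\pi(w_0,y_0)=\capacity(V)\,\pi(w_0,y_0)$, so it suffices to estimate the expected count $\tilde\pi$ for a walk started far from $w_0$. Geometric domination then reduces $\tilde\pi$ to the single probability $\IP[\mathcal C^{\tilde V}_{w_0,y_0}\geq 1]$, which is estimated directly as (probability of approaching a ball of radius $\asymp s$ around $w_0$) $\times$ (probability of entering $A_1$ at $w_0$) $\times f_{A_1}(w_0,y_0)\asymp (s/r)^{d-2}\cdot s^{-(d-1)}\cdot f_{A_1}(w_0,y_0)$, yielding both bounds in~(i) at once.

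A smaller point on~(ii): the supremum in Proposition~\ref{t_2ndmoment} is $\sup_{w'\in V}\IE_{w'}G(z)$ for \emph{fixed} $z$, not $\sup_{w_0,y_0}\pi(w_0,y_0)$ as in~(iii). These are different quantities. The fix is easy, though: since $T_\Delta$ is stochastically dominated by a geometric random variable uniformly in the starting point, $\sup_{w'}\IE_{w'}G(z)\leq c\sup_{w,y}g_{(w,y)}(z)\leq c\,s^{-2(d-1)}$, and then $\IE(F^2)\leq c\,\pi\cdot s^{-2(d-1)}$ gives~(ii). No scaling assumption $r\asymp s^{b_{A_1}}$ is needed. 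Part~(iii) is correct as you wrote it.
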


We now provide a large deviation bound for $F(w_0,y_0)$.

\begin{lemma}
\label{l_expmoment}
There are constants $c,c_1,c_2>0$ such that for every $(w_0,y_0)\in V\times \partial A_2$, we have
\begin{equation}
\GP\big{[}  F(w_0,y_0)>v c s^{-2(d-1)}  \big{]}\leq c_1 s^{2d-3}f_{A_1}(w_0,y_0)\capacity(V)^{-1}e^{-c_2 v}
\end{equation}
for any $v\geq 2$ (we can also assume $c_2\leq 1$ without loss of generality).
\end{lemma}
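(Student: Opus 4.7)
The plan is to invoke Proposition~\ref{t_expmoment} with $\hat z$ chosen as any path satisfying $\Xi(\hat z)=(w_0,y_0)$ (so that $F(w_0,y_0)=G(\hat z)$), and with $\ell = cs^{-2(d-1)}$, picked so that $\ell\geq\sup_{(w,y)\in V\times\partial A_2}g_{(w,y)}(\hat z)$ by virtue of \eqref{supprob2}. The proposition then splits $\GP[G(\hat z)\geq v\ell]$ into a product of a ``prefactor'' $\GP[G(\hat z)\geq\ell]$ and a ``bracket'' $(e^{-(v/2-1)}+\sup_{w'}\GP_{w'}[\eta(\Gamma\times[0,v\ell\alpha/2])\leq N(\Gamma)])$.

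For the prefactor, Markov's inequality together with Lemma~\ref{l_expecslt}(i) gives
\[
\GP[G(\hat z)\geq\ell]\leq \frac{\pi(w_0,y_0)}{\ell}\leq c\,s^{2d-3}\,f_{A_1}(w_0,y_0)\,\capacity(V)^{-1},
\]
which is precisely the multiplicative factor appearing in the claim.

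For the bracket, the first summand $e^{-(v/2-1)}$ already yields the dimension-free exponential rate in $v$. To make the second summand comparably small, I will take $\Gamma$ to consist of the paths whose endpoint pair $(w'_0,y'_0)$ lies in a neighborhood $U_{w_0}\times U_{y_0}\subset V\times\partial A_2$ of $(w_0,y_0)$ of linear scale $\asymp s$. The asymptotics \eqref{e_circlebound}/\eqref{e_squarebound} (and the estimate \eqref{supprob2}) show that on this neighborhood the ratio $g_{(w,y)}(z')/g_{(w,y)}(\hat z)=\IP_{w,y}[\Xi=(w'_0,y'_0)]/\IP_{w,y}[\Xi=(w_0,y_0)]$ stays bounded below by a positive dimensional constant, so $\alpha\gtrsim 1$; and $\mu(\Gamma)\asymp s^{2(d-1)}$ by construction, so $\ell\alpha\mu(\Gamma)$ is of constant order. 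Hence the Poisson variable $\eta(\Gamma\times[0,v\ell\alpha/2])$ has mean $\gtrsim v$.

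Finally, I decompose $\GP_{w'}[\eta\leq N(\Gamma)]\leq \GP[\eta\leq \kappa v]+\GP_{w'}[N(\Gamma)>\kappa v]$ for a small $\kappa>0$. The first term is controlled by a Poisson Chernoff bound, giving $e^{-c' v}$; the second is handled by combining the identity $\IE_{w'} N(\Gamma)=\sum_{(w_0',y_0')\in U_{w_0}\times U_{y_0}}\pi_{w'}(w_0',y_0')$ (Proposition~\ref{t_expslt} plus Lemma~\ref{l_expecslt}) with a tail estimate, yielding exponential decay in $v$ as well. The main obstacle will be checking the uniform lower bound on $\alpha$ over \emph{all} starting pairs $(w,y)\in V\times\partial A_2$ (including those remote from $(w_0,y_0)$): this amounts to a Harnack-type statement saying that the conditional endpoint distribution varies little on scale $s$, and is where the detailed harmonic-measure estimates from the appendix (underlying \eqref{supprob2}, \eqref{e_circlebound}, \eqref{e_squarebound}) do the actual work. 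The final remark $c_2\leq 1$ can be obtained, if needed, by shrinking the constant.
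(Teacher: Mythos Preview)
Your approach mirrors the paper's almost exactly: invoke Proposition~\ref{t_expmoment} with $\ell=cs^{-2(d-1)}$ (justified by \eqref{supprob2}), control the prefactor $\GP[G(\hat z)\geq\ell]$ via Markov's inequality and Lemma~\ref{l_expecslt}(i), take $\Gamma$ to be the set of paths whose endpoint pair lies within distance $\asymp s$ of $(w_0,y_0)$, and then use $\alpha\gtrsim 1$ together with $\mu(\Gamma)\asymp s^{2(d-1)}$ so that the Poisson variable in the bracket has mean $\gtrsim v$. The paper likewise defers the uniform lower bound on~$\alpha$ to a dedicated appendix section (Section~\ref{s_t3}), precisely the Harnack-type argument you anticipate.

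There is one genuine gap. For the term $\GP_{w'}[N(\Gamma)>\kappa v]$ you write that it is ``handled by combining the identity $\IE_{w'} N(\Gamma)=\sum\pi_{w'}(w_0',y_0')$ with a tail estimate, yielding exponential decay in~$v$''. But a bound on the \emph{expectation} of $N(\Gamma)$ gives only polynomial decay via Markov's inequality; it cannot by itself produce $e^{-cv}$. The paper's argument at this step is different and more direct: each time the underlying random walk reaches $\partial A_2$, with probability bounded below by a positive constant (uniformly over the hitting point) it never returns to the neighborhood $\Gamma_{w_0,y_0}$ again, simply by transience. Hence $N(\Gamma)$ is stochastically dominated by a Geometric random variable with a fixed parameter, and the exponential tail $\GP_{w'}[N(\Gamma)>\kappa v]\leq e^{-cv}$ is immediate. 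This domination argument is the piece you should substitute for your expectation-based sketch.
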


\begin{proof}
In the proof of this particular result it will be important for us to distinguish between the constants. We will use Proposition $\ref{t_expmoment}$ for $F(w_0,y_0)$, with
\begin{equation*}
\Gamma_{w_0,y_0} :=\{(w_0 ',y_0 ')\in \partial A_1\times V ;\space\space \max \{\|w_0 ' - w_0\|,\|y_0 ' - y_0\|\} \leq c_4 s \},
\end{equation*} 
with $0<c_4<1$ defined in Section~$\ref{s_t3}$ of the appendix.

Using the same notation as in Proposition $\ref{t_expmoment}$, we note that $(\ref{supprob2})$ implies
\begin{equation*}
l\leq c s^{-2(d-1)}
\end{equation*}
and observe that $\mu(\Gamma_{w_0,y_0})\geq c_5 s^{2(d-1)}$ for some constant $c_5>0$. Also, as can be seen in Section~$\ref{s_t3}$ of the appendix, we have
\begin{equation*}
\alpha \geq c_3>0.
\end{equation*}
Chebyshev's inequality and Lemma~$\ref{l_expecslt}$ then imply
\begin{equation}
\label{e_timebound}
\GP\big{[}     T_l < \infty  \big{]}\leq\GP\big{[}  F(w_0,y_0)>c s^{-2(d-1)}  \big{]}\leq\frac{\pi (w_0,y_0)}{c s^{-2(d-1)}}
\leq
c_1 s^{2d-3}f_{A_1}(w_0,y_0)\capacity (V)^{-1}.
\end{equation}

We denote by $N(\Gamma_{w_0,y_0})$ the number of times the simple random walk trajectory associated with $F(w_0,y_0)$ makes an excursion of the form $z'\in\Sigma$ on $A_2^C$ such that
 $\Xi(z')=(w',y')\in\Gamma_{w_0,y_0}$. We also let $\eta_{w_0,y_0}$ stand for the number of points of the Poisson process associated with our soft local times that belong to $\Gamma_{w_0,y_0}\times\big[0,\frac{1}{2}v c c_3 s^{-2(d-1)}\big]$. We note that both definitions are consistent with Proposition~$\ref{t_expmoment}$ and write
\begin{equation*}
\GP\Big{[}  \eta_{w_0,y_0}\leq   N(\Gamma_{w_0,y_0})  \Big{]}\leq \GP\Big{[}  \eta_{w_0,y_0}\leq   \frac{c c_3 c_5 v}{4}  \Big{]} + 
\GP\Big{[}    N(\Gamma_{w_0,y_0}) \geq  \frac{c c_3 c_5 v}{4}   \Big{]}.
\end{equation*}
We claim that both terms in the right side of the above inequality are exponentially small in $v$. To see why this is true, observe that:
\begin{itemize}
\item   $\eta_{w_0,y_0}$ has Poisson distribution with  parameter at least $\frac{cc _3 c_5 v}{2}$, and
\item   every time the simple random walk
 associated with $F(w_0,y_0)$ hits $\partial A_2$, with uniform positive probability the walk never reaches $\Gamma_{w_0,y_0}$ again. This way $N(\Gamma_{w_0,y_0})$ is dominated by a 
Geometric$(c_6)$ random variable, for some constant $c_6<1$.
\end{itemize}
Together with $(\ref{e_timebound})$ and Proposition~$\ref{t_expmoment}$, this finishes the proof of the lemma.
\end{proof}

Let $\Psi_{w_0,y_0}(\lambda)=\IE (e^{\lambda F(w_0,y_0)})$ be the moment generating function of $F(w_0,y_0)$. We are going to use the bounds above to estimate $\Psi_{w_0,y_0}$. 
It is elementary to obtain that $e^{t}-1\leq t+t^2$ for $t\in [0,1]$. 
With this observation in mind, we write for $0\leq \lambda\leq \frac{c_2 s^{2(d-1)}}{2c}$, where $c$ and $c_2$ are the same as in the theorem above:
\begin{align}
\lefteqn{\Psi_{w_0,y_0}(\lambda)-1   =\phantom{******}}\nonumber \\ \nonumber 
&\phantom{****} = \IE (e^{\lambda F(w_0,y_0)}-1)\1{\lambda F(w_0,y_0)\leq 1} + \IE (e^{\lambda F(w_0,y_0)}-1)\1{\lambda F(w_0,y_0)> 1} \\ \nonumber 
&\phantom{****} \leq  \IE(\lambda F(w_0,y_0)+\lambda^2 F(w_0,y_0)^2)+\IE (e^{\lambda F(w_0,y_0)}-1)\1{\lambda F(w_0,y_0)> 1} \\ \nonumber 
&\phantom{****} \leq  \lambda\pi(w_0,y_0)+c_1\lambda^2\capacity(V)^{-1}s^{-2d+2}f_{A_1}(w_0,y_0)+\IE (e^{\lambda F(w_0,y_0)}-1)\1{\lambda F(w_0,y_0)> 1} \\ \nonumber 
&\phantom{****} \leq  \lambda\pi(w_0,y_0)+c'\lambda^2\capacity(V)^{-1}s^{-2d+2}f_{A_1}(w_0,y_0) +\lambda\int\limits_{\lambda^{-1}}^{\infty}e^{\lambda y}\GP \big{[}
F(w_0,y_0)>y  \big{]}\d y \\ \nonumber 
&\phantom{****} \leq  \lambda\pi(w_0,y_0)+f_{A_1}(w_0,y_0)\capacity(V)^{-1}\Big(c'\lambda^2s^{-2d+2} +\lambda c's^{2d-3}\int\limits_{\lambda^{-1}}^{\infty} \exp{\Big(\frac{-c_2 s^{2(d-1)}y}{2c}\Big)}\d y \Big) \\ \nonumber 
&\phantom{****} \leq  \lambda\pi(w_0,y_0)+f_{A_1}(w_0,y_0)\capacity(V)^{-1}\Big(c'\lambda^2 s^{-2d+2} + c'\lambda s^{-1}\exp{\Big(\frac{-c_2 s^{2(d-1)}\lambda^{-1}}{2c}\Big)} \Big)\nonumber \\  \label{e_moment1}
&\phantom{****}\leq\lambda\pi(w_0,y_0)+c'\lambda^2
\capacity(V)^{-1}s^{-2d+2}f_{A_1}(w_0,y_0),
\end{align}
where we used Lemma~$\ref{l_expecslt}$ and Lemma~$\ref{l_expmoment}$. Now since $e^{-t}-1\leq -t+t^2$ for all $t\geq 0$,
 we obtain for $\lambda\geq 0$
\begin{equation}
\label{e_moment2}
\Psi_{w_0,y_0}(-\lambda)-1\leq  -\lambda\pi(w_0,y_0)+c\lambda^2\capacity(V)^{-1}s^{-2d+2}f_{A_1}(w_0,y_0),
\end{equation}
(the large deviation bound of Lemma~$\ref{l_expmoment}$ 
is not necessary is this case).

Observe that if $(\chi_k,k\geq 1)$ are i.i.d.\ random variables with common moment generating function~$\Psi$ and $N$ is an 
independent Poisson random variable with parameter $\theta$, then
\begin{equation*}
\IE\exp\big(\lambda\sum\limits_{k=1}^{N}\chi_k \big)=e^{(\theta(\Psi(\lambda)-1))}.
\end{equation*}
We let $F_k(w_0,y_0)$ denote the expectation $\IE(G^k(z))$ defined in $(\ref{e_sltdef1})$, when $z\in\Sigma$ is such that $\Xi(z)=(w_0,y_0)$. Using Lemma~$\ref{l_expecslt}$ and $(\ref{e_moment1})$, we have, for $N_{\hat u}^{V}\eqd Poisson(\hat u\capacity(V))$ and any $\delta>0$
\begin{equation}
\label{e_moment3}
\begin{array}{e}
\lefteqn{\GP \big{[} G^{\Sigma}_{\hat u}(z)\geq (1+\delta)\hat u \capacity(V)\pi (w_0,y_0) \big{]}=\phantom{********}}\\
&\phantom{********} = & \GP \Big{[}  \sum_{k=1}^{N_{\hat u}^{V}} F_k (w_0,y_0)\geq (1+\delta)\hat u \capacity(V)\pi (w_0,y_0) \Big{]} \\ \\
&\phantom{********} \leq & \frac{\IE(\exp \big(\lambda \sum_{k=1}^{N_{\hat u}^{V}} F_k (w_0,y_0)\big))}{\exp\big(\lambda (1+\delta)\hat u \capacity(V)\pi (w_0,y_0)\big)} \\ \\
&\phantom{********} \leq &  \exp\big(-\lambda (1+\delta)\hat u \capacity(V)\pi (w_0,y_0)+\hat u\capacity (V)(\Psi_{w_0,y_0}(\lambda)-1)\big) \\ \\
&\phantom{********} \leq & \exp\big(-(\lambda \delta\hat u \capacity(V)\pi (w_0,y_0)-c'\lambda^2\hat u s^{-2d+2}f_{A_1}(w_0,y_0))\big) \\ \\
&\phantom{********} \leq & \exp\big(-(\lambda \delta\hat u c s^{-1} f_{A_1}(w_0,y_0)-c'\lambda^2\hat u s^{-2d+2}f_{A_1}(w_0,y_0))\big) .
\end{array}
\end{equation}
Analogously, with $(\ref{e_moment2})$ instead of $(\ref{e_moment1})$, we obtain
\begin{equation}
\label{e_moment4}
\GP \big{[} G^{\Sigma}_{\hat u}(z)\leq (1-\delta)\hat u \capacity(V)\pi (w_0,y_0) \big{]} \leq  \exp\big(-(\lambda \delta\hat u c s^{-1}-c'\lambda^2\hat u s^{-2d+2})f_{A_1}(w_0,y_0)\big) .
\end{equation}
We choose $\lambda=c_7\delta s^{2d-3}$ with $c_7$ small enough so that $\lambda\leq\frac{c_2 s^{2(d-1)}}{2c}$, and observe that the bounds for $f_{A_1}(w_0,y_0)$ given in $(\ref{e_circlebound})$ and $(\ref{e_squarebound})$ imply
\begin{align*}
\inf_{w_0,y_0}f_{A_1^{\tiny\Square}}(w_0,y_0)& \geq  c s^{2d} r^{-3d+2},\\
\inf_{w_0,y_0}f_{A_1^{\tiny\Circle}}(w_0,y_0)& \geq  c s^{2}r^{-d}.
\end{align*}
Recall the definition of $b_{A_1^{\tiny\Circle}}$, a number such that 
\begin{equation*}
1\leq b_{A_1^{\tiny\Circle}}<\frac{2d-2}{d},
\end{equation*}
and the definition of $b_{A_1^{\tiny\Square}}$, a number such that
\begin{equation*}
1\leq b_{A_1^{\tiny\Square}}<\frac{4d-4}{3d-2}.
\end{equation*}
Recall that $r\asymp s^{b_{A_1}}$. Then there exist constants $a_{A_1^{\tiny\Circle}}=2d-2-db_{A_1^{\tiny\Circle}}>0$ and $a_{A_1^{\tiny\Square}}=4d-4-3db_{A_1^{\tiny\Square}}+2b_{A_1^{\tiny\Square}}>0$ such that 
\begin{equation*}
 \GP \big{[} G^{\Sigma}_{\hat u}(z)\geq (1+\delta)\hat u \capacity(V)\pi (w_0,y_0) \big{]}\leq  \exp\big(-c\delta^2 \hat{u} s^{a_{A_1}}\big) .
\end{equation*}
Using the union bound (note that $ \partial A_1\times V$ has $O(r^{2(d-1)})$ elements),
\begin{align}
\label{e_moment5}
\lefteqn{\GP \big{[} (1-\delta)\hat u \capacity(V)\pi (\Xi(z)) \leq G^{\Sigma}_{\hat u}(z)\leq (1+\delta)\hat u \capacity(V)\pi (\Xi(z))\text{, for all $z\in\mathcal{K}$}\big{]}\geq}
\phantom{********************************}\nonumber\\
  &\geq 1-cr^{2(d-1)}\exp\big(-c'\delta^2\hat u s^{a_{A_1}}\big).
\end{align}
Observe that we can suppose $c'\leq 1$ 
without loss of generality. We define the interval 
\begin{equation*}
I_{\hat u,z}^{\delta}:=[(1-\delta)\hat u \capacity(V)\pi (\Xi(z)), (1+\delta)\hat u \capacity(V)\pi (\Xi(z))]
\end{equation*}
and the event
\begin{equation*}
D_{\hat u}^{\delta}:=\{ G_{\hat u}^{\Sigma}\in I_{\hat u,z}^{\delta}\text{ for all $z\in\mathcal{K}$} \}.
\end{equation*}
Using $(\ref{e_moment5})$ and the union bound we obtain, for $\eps>0$ sufficiently small,
\begin{equation}
\nonumber
\GP \Big{[} D_{u}^{\eps/4},D_{u(1-\eps)}^{\eps/4} ,D_{u(1+\eps)}^{\eps/4} \Big{]}
\geq 1-cr^{2(d-1)}\exp\big(-c'\eps^2 u s^{a}\big).
\end{equation}
Since $r\asymp s^{b_{A_1}}$, by replacing the constants $c$ and $c'$ 
in the above equation we obtain 
\begin{equation}
\label{e_moment6}
\GP \Big{[} D_{u}^{\eps/4},D_{u(1-\eps)}^{\eps/4} ,D_{u(1+\eps)}^{\eps/4} \Big{]}
\geq 1-c\exp\big(-c'\eps^2 u s^{a}\big).
\end{equation}
We have just proved that with high probability, the soft local time associated to each of the processes~$\I^u_{A_1}$, $\I^{u(1-\eps)}_{A_1}$ and $\I^{u(1+\eps)}_{A_1}$ stays confined between the graphs of two explicit deterministic functions. This happened when we let the ``information'' given by $\I^{u}_{A_2}$; namely the points of entrance at~$V$ and exit at~$\partial A_2$ of the excursions on~$A_1$ of the simple random walk trajectories of the interlacements process at level~$u$; to be distributed according to the right law, that is, the law of the clothesline processes. When we ``average'' those points according to these laws we obtain a good concentration for the whole function~$G_{u}^{\Sigma}$, but our goal is to obtain a similar concentration when these points are deterministic. The heuristic argument is that when something happens with high probability in the annealed law, then most of the times it will also happen with high probability in the quenched law. We will introduce some new notation to make this argument rigorous and prove our main theorem.

Given any two finite sets $K_1,K_2\subset\Z^d$, not necessarily disjoint, we want to describe a collection of generalized clothesline processes between~$K_1$ and~$K_2$ associated with the interlacements process at level~$u$. We construct an infinite family $(X^{(j)}_k,k\geq 0)_{0<j<\infty}$ of independent simple random walks with starting point distributed according to the normalized harmonic measure on~$K_1$, as we did in definition~\eqref{interlacementsdef}. We let $\tau^j_0\equiv 0$ and define inductively
\begin{equation*}
\begin{array}{e}
\tau_{k+1}^j &:= 1\{X^{(j)}_{\tau_{k}^j}\in K_1\}\inf\{t>\tau_{k+1}^j; X^{(j)}_t\in K_2\} 
\\
&\qquad+ 
 1\{X^{(j)}_{\tau_{k}^j}\in K_2\} \inf\{t>\tau_{k+1}^j; X^{(j)}_t\in K_1\},
\end{array}
\end{equation*}
where $1\{\cdot\}$ denotes the indicator function of an event. We also define the random time
\begin{equation*}
T_j:=\inf_{k\geq 0}\{\tau_{k+1}^j=\infty\}.
\end{equation*}
We let yet again $N_u^{K_1}\eqd\textit{Poisson}(u\capacity (K_1))$ be a random variable independent from\[(X^{(j)}_k,k\geq 0)_{0<j<\infty}.\] We then define the interlacements' clothesline processes between~$K_1$ and~$K_2$ at level~$u$ by
\begin{equation*}
\mathrm{Cloth}_u(K_1,K_2):=\Big\{\big(X^{(j)}_{\tau^j_k}\big)_{k=0}^{T_j}   \Big\}_{j=1}^{N_u^{K_1}}.
\end{equation*}
When  $K_1=V$ and $K_2=\partial A_2$, we have
\begin{equation*}
\mathrm{Cloth}_u(V,\partial A_2)\eqd\Big\{\big( W^j_k,Y^j_k   \big)_{k=1}^{T^j_\Delta}   \Big\}_{j=1}^{N_u^{V}}.
\end{equation*}
We define
\begin{equation*}
\Big( \mathcal{S}_u(K_1,K_2),\sigma_u(K_1,K_2),\IP^u_{K_1,K_2}     \Big)
\end{equation*}
to be the probability space in which $\mathrm{Cloth}_u(K_1,K_2)$ is defined, and in which $\sigma_u(K_1,K_2)$ is the smallest $\sigma$-field in which $\mathrm{Cloth}_u(K_1,K_2)$ is measurable. If $\hat\zeta\in\mathcal{S}_u(V,\partial A_2)$ and $\IP^u_{V,\partial A_2}(\hat\zeta)>0$, then we can write~$\hat\zeta$ as a finite collection of finite sequences of points belonging to~$V$ and~$\partial A_2$
\begin{equation*}
\hat{\zeta}:=\big\{  \hat{\zeta}_1,\dots, \hat{\zeta}_K   \big\},
\end{equation*}
where for each $j=1,\dots,K$; $\hat{\zeta}_j$ is a finite sequence alternating between points of~$V$ and~$\partial A_2$. In other words, $\hat{\zeta}_j$ is a possible realization of a clothesline process. We write
\begin{equation*}
\hat{\zeta}_j:=\big(  \zeta^j_0,\dots,\zeta^j_{n(j)} \big),
\end{equation*} 
where~$n(j)$ is odd, every even entry belongs to~$V$ and every odd entry belongs to~$\partial A_2$.
\begin{figure}
\centering
\includegraphics[scale = 1]{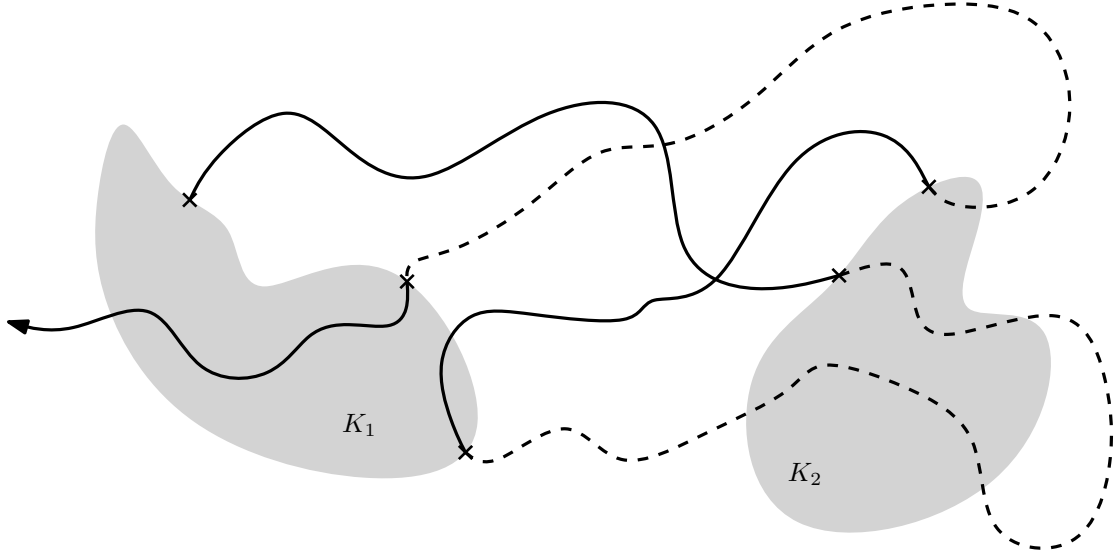}
\vspace{0.5cm}
\caption{The generalized clothesline process between~$K_1$ and~$K_2$, here represented by the X marks.}
\label{genclothdef}
\end{figure}
We then define the soft local time associated with~$\hat{\zeta}$. Using the same realization of the Poisson point process on~$\Sigma\times\R_+$ defined on Section~$\ref{s_simexc}$, we construct the soft local times
\begin{equation*}
G^{\hat\zeta_j}(z):=\sum_{k=0}^{\frac{n(j)+1}{2}}\tilde{\xi}_{k}^{j} g_{(\zeta^j_{2k},\zeta_{2k_1}^j)}(z),
\end{equation*}
where $\tilde{\xi}^j_k$ is an exponential random variable defined in the manner of~\eqref{sltclothdef}. We then define
\begin{equation*}
G^{\hat{\zeta}}(z):=\sum_{j=1}^{K}G^{\hat\zeta_j}(z).
\end{equation*}
This function should be viewed as a quenched version of the soft local times $G^\Sigma_u$, when the collection o clothesline processes $\{( W^j_k,Y^j_k   )_{k=1}^{T^j_\Delta}  \}_{j=1}^{N_u^{V}}$ is given by yhe deterministic element~$\hat{\zeta}$. We denote by $\I^u_{A_1\mid\hat{\zeta}}$ the interlacements process inside~$A_1$ determined by the ranges  of the excursions of~$\Sigma$ bellow~$G^{\hat{\zeta}}$. $\I^u_{A_1\mid\hat{\zeta}}$ is distributed as the random interlacements process inside~$A_1$ when its associated random walks excursions have entrance points at~$V$ and exit points at~$\partial A_2$ given by~$\hat{\zeta}$. The next proposition implies that~$G^{\hat{\zeta}}$ is usually between~$G^\Sigma_{u(1-\eps)}$ and~$G^\Sigma_{u(1+\eps)}$ with high probability.
 
\begin{proposition}
\label{t_varpi}
There exists a set $\mathcal{A}\in\sigma_u(V,\partial A_2)$ such that 
\begin{equation*}
\IP^u_{V,\partial A_2}\big{[} \mathcal{A} \big{]}\geq 1-\exp\Big(-\frac{c'}{2}\eps^2 u s^{a_{A_1}}\Big),
\end{equation*}
and for all fixed $\hat{\zeta}\in\mathcal{A}$,
\begin{align*}
\lefteqn{\GP \big{[} G_{u(1-\eps)}^{\Sigma}(z)\leq G^{\hat{\zeta}}(z)\leq G_{u(1+\eps)}^{\Sigma}(z)\text{ for all $z\in\mathcal{K}$} \big{]}}\phantom{*********************}\nonumber\\  &\phantom{*****}\geq 1-c \exp\Big(-\frac{c'}{2}\eps^2 u s^{a_{A_1}}\Big).
\end{align*}
\begin{proof}
Observe that $(\ref{e_moment6})$ implies
\begin{align}
\lefteqn{\int\GP \big{[} G_{u(1-\eps)}^{\Sigma}(z)\leq G^{\hat{\zeta}}(z)\leq G_{u(1+\eps)}^{\Sigma}(z)\text{ for all $z\in\mathcal{K}$} \big{]}\IP^u_{V,\partial A_2}\big{[}\d \hat{\zeta}\big{]}}\phantom{************************}\nonumber\\  &\phantom{********}\geq 1-c \exp\big(-c'\eps^2 u s^{a_{A_1}}\big)\label{e_moment7}.
\end{align}
Let
\begin{equation*}
\begin{array}{e}\lefteqn{\mathcal{A}:=\Big\{\hat{\zeta}\in \mathcal{S}_u(V,\partial A_2)\text{ such that: }\GP\big{[}  G_{u(1-\eps)}^{\Sigma}(z)\leq G^{\hat{\zeta}}(z)\leq G_{u(1+\eps)}^{\Sigma}(z)\text{ for all $z\in\mathcal{K}$} \big{]} }\phantom{********************}\\ &\phantom{*************}\geq 1-c \exp\Big(-\frac{c'}{2}\eps^2 u s^{a_{A_1}}\Big) \Big\}.\end{array}
\end{equation*}
\begin{figure}[ht]
\centering
\includegraphics[scale = .7]{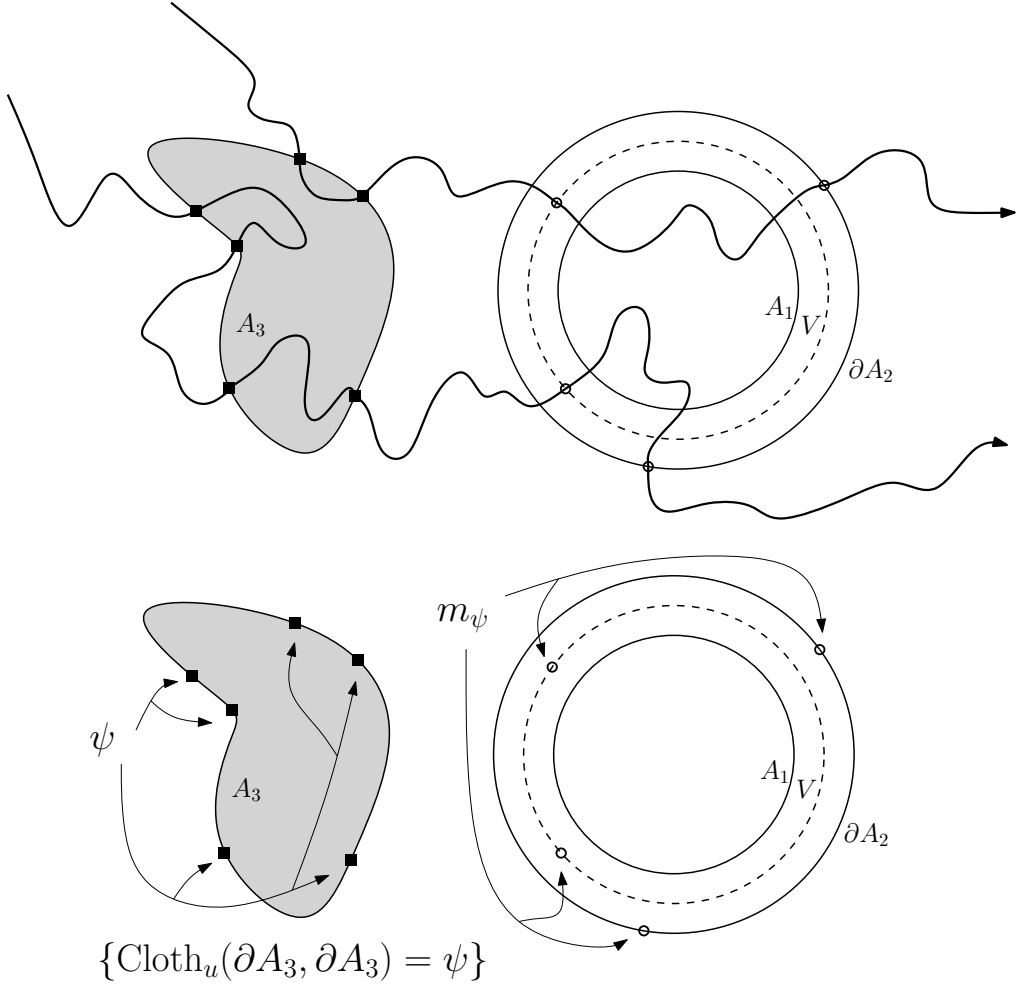}
\vspace{0.5cm}
\caption{A visual representation of the random element $m_{\hat{\psi}}$.}
\label{mpsidef}
\end{figure}
Then $(\ref{e_moment7})$ implies 
\begin{align*}
\lefteqn{\IP^u_{V,\partial A_2}\big{[}\mathcal{A}\big{]}+\Big(1-c \exp\Big(-\frac{c'}{2}\eps^2 u s^{a_{A_1}}\Big)\Big)\Big(1-\IP^u_{V,\partial A_2}\big{[}\mathcal{A}\big{]}\Big)}\phantom{*****************************}\\
&\geq 1-c \exp\big(-c'\eps^2 u s^{a_{A_1}}\big),
\end{align*}
so that
\begin{equation*}
\IP^u_{V,\partial A_2}\big{[}\mathcal{A}\big{]}\geq  1-\exp\Big(-\frac{c'}{2}\eps^2 u s^{a_{A_1}}\Big).
\end{equation*}
This finishes the proof of the proposition.
\end{proof}
\end{proposition}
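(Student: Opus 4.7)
The plan is to deduce the proposition directly from the three-way concentration estimate $(\ref{e_moment6})$, by (i) verifying that triple concentration forces the desired sandwich between the soft local times at levels $u(1-\eps)$, $u$ and $u(1+\eps)$, and then (ii) using the independence of the Poisson point process on $\Sigma\times\R_+$ from the random clothesline process to convert the resulting annealed estimate into an integrated quenched estimate, to which a Markov-type argument applies.

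For the interval arithmetic, fix $z\in\mathcal{K}$ and set $m(z):=\capacity(V)\pi(\Xi(z))$. On the event $D_u^{\eps/4}\cap D_{u(1-\eps)}^{\eps/4}\cap D_{u(1+\eps)}^{\eps/4}$ the three values $G^\Sigma_{u(1-\eps)}(z)$, $G^\Sigma_u(z)$, $G^\Sigma_{u(1+\eps)}(z)$ lie respectively in the intervals $u(1-\eps)[1-\eps/4, 1+\eps/4]m(z)$, $u[1-\eps/4, 1+\eps/4]m(z)$, $u(1+\eps)[1-\eps/4, 1+\eps/4]m(z)$. A direct check gives $(1-\eps)(1+\eps/4)\leq 1-\eps/4$ and $(1+\eps)(1-\eps/4)\geq 1+\eps/4$ for $\eps<1/4$, so the three intervals sit in non-decreasing order on the real axis and the sandwich $G^\Sigma_{u(1-\eps)}(z)\leq G^\Sigma_u(z)\leq G^\Sigma_{u(1+\eps)}(z)$ is automatic. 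Now, by construction $G^\Sigma_u \equiv G^{\hat Z}$, where $\hat Z$ denotes the random collection of $N^V_u$ clothesline processes actually driving the level-$u$ interlacement; since $\hat Z$ is independent (in the global measure $\GP$) of the Poisson point process used to construct the $\xi$'s, Fubini yields
\[
\GP\big[G^\Sigma_{u(1-\eps)}(z)\leq G^\Sigma_u(z)\leq G^\Sigma_{u(1+\eps)}(z)\text{ for all }z\in\mathcal{K}\big] = \int p(\hat\zeta)\,\IP^u_{V,\partial A_2}[\d\hat\zeta],
\]
where $p(\hat\zeta):=\GP\big[G^\Sigma_{u(1-\eps)}(z)\leq G^{\hat\zeta}(z)\leq G^\Sigma_{u(1+\eps)}(z)\text{ for all }z\in\mathcal{K}\big]$. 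Combining with $(\ref{e_moment6})$ gives $\int p(\hat\zeta)\,\IP^u_{V,\partial A_2}[\d\hat\zeta]\geq 1-c\exp(-c'\eps^2 u s^{a_{A_1}})$.

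The final step is a Markov application: define
\[
\mathcal{A} := \Big\{\hat\zeta\in\mathcal{S}_u(V,\partial A_2):\ p(\hat\zeta)\geq 1 - c\exp\big(-\tfrac{c'}{2}\eps^2 u s^{a_{A_1}}\big)\Big\},
\]
which is $\sigma_u(V,\partial A_2)$-measurable since $p$ is. Markov's inequality applied to $1-p\geq 0$ (whose integral is at most $c\exp(-c'\eps^2 u s^{a_{A_1}})$) gives $\IP^u_{V,\partial A_2}[\mathcal{A}^c]\leq \exp(-\tfrac{c'}{2}\eps^2 u s^{a_{A_1}})$, while every $\hat\zeta\in\mathcal{A}$ satisfies the required quenched bound by construction.

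I do not expect a genuine obstacle: all analytic heavy lifting has already been absorbed into deriving $(\ref{e_moment6})$. The only care needed is bookkeeping — ensuring that the \emph{same} realization of the Poisson point process on $\Sigma\times\R_+$ is used to define $G^\Sigma_{u(1-\eps)}$, $G^\Sigma_{u(1+\eps)}$ and each $G^{\hat\zeta}$ for fixed $\hat\zeta$, and that the clothesline process $\hat Z$ is independent of that Poisson process in the global measure $\GP$, which is precisely what makes the Fubini step legitimate.
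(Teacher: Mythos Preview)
Your proof is correct and follows essentially the same route as the paper's: both derive the integrated (annealed) bound from $(\ref{e_moment6})$, define $\mathcal{A}$ as the level set where the quenched probability exceeds $1-c\exp(-\tfrac{c'}{2}\eps^2 u s^{a_{A_1}})$, and then extract the measure estimate for $\mathcal{A}$ --- your Markov-inequality step is algebraically identical to the paper's split $\IP[\mathcal{A}]+(1-c\exp(-\tfrac{c'}{2}\ldots))(1-\IP[\mathcal{A}])\geq 1-c\exp(-c'\ldots)$. You are simply more explicit than the paper about the interval arithmetic justifying the sandwich and about the independence/Fubini step behind~$(\ref{e_moment7})$.
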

\begin{figure}[ht]
\centering
\includegraphics[scale = 1]{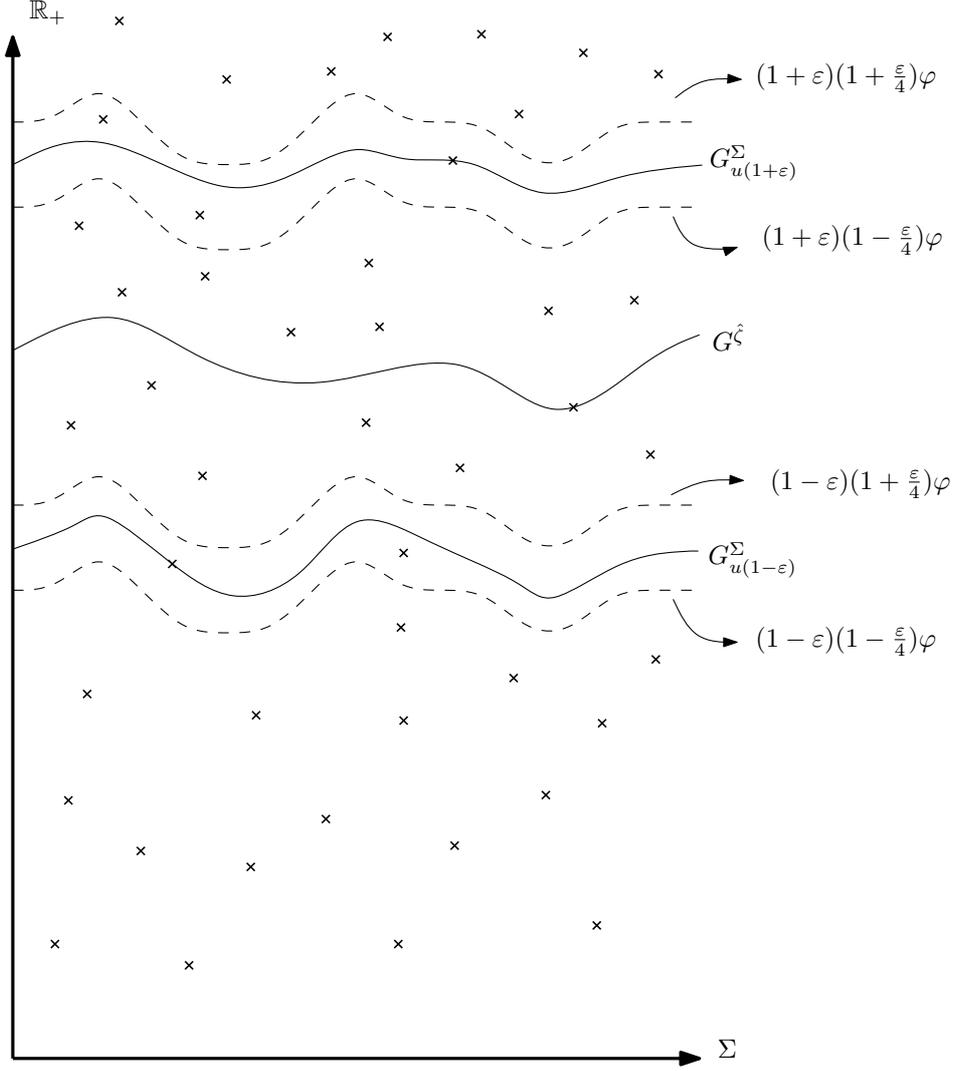}
\vspace{0.5cm}
\caption{When the sequence $\hat{\zeta}$ belongs to a well behaved set $\mathcal{A}$, the decoupling probability is greater than $1-c \exp\Big(-\frac{c'}{2}\eps^2 u s^{a_{A_1}}\Big)$. 
The symbol~$\varphi$ in the figure stands for the function $u\capacity(V)\pi(\Xi(z))$.
 The figure shows the decoupling event, where $G_{u(1-\eps)}^{\Sigma}(z)\leq G^{\hat{\zeta}}(z)\leq G_{u(1+\eps)}^{\Sigma}(z)$ for all~$z\in\mathcal{K}$.  }
\label{softlocaltimesfig2}
\end{figure}
	Proposition $\ref{t_varpi}$ implies that, for
$\hat{\zeta}\in\mathcal{A}$, there exists a process~($\hat{\I}^u_{A_1}$, $u\geq 0$) distributed as the random interlacements 
set intersected with $A_1$, and a coupling $\GP$ such that, for all $\eps>0$ sufficiently small and $r>0$ sufficiently big, we have
\begin{equation}
\label{e_varpi}
\GP\big{[} \hat{\I}^{u(1-\eps)}_{A_1} \subseteq  \I^u_{A_1\mid\hat{\zeta}}   \subseteq    \hat{\I}^{u(1+\eps)}_{A_1}   \big{]} \geq 1-c \exp\Big(-\frac{c'}{2}\eps^2 u s^{a_{A_1}}\Big).
\end{equation}

To complete the proof of our main theorem we need to show that a result similar to Proposition~$\ref{t_varpi}$ remains valid under a different conditioning.

Let $A_3\subset A_2$ be such that $|\partial A_3|<\infty$, and write $\I^u_{A_3}:=\I^u\cap A_3$ . Then $\mathrm{Cloth}_u(\partial A_3, \partial A_3)$ is well defined. Given $\hat{\psi}\in\mathcal{S}_u(\partial A_3,\partial A_3)$, we define $m_{\hat{\psi}}\equiv m_{\hat{\psi}}(\partial A_3)$ as a random element of 
$\mathcal{S}_u(V,\partial A_2)$ distributed as~$\mathrm{Cloth}_u(V, \partial A_2)$ conditioned on the event where the entrance and exit points at~$\partial A_3$ of the simple random walk excursions of $\I^u_{A_3}$ are given by~$\hat{\psi}$. We denote by $\I^u_{A_1\mid\hat{\psi}} $ the random interlacements process on~$A_1$ conditioned on the event where~$\mathrm{Cloth}_u(\partial A_3, \partial A_3)$ is equal to the deterministic element~$\hat\psi$. Notice that all ``information'' given by~$\I^u_{A_3}$ to $\I^u\cap A_3^C$ is contained in $\mathrm{Cloth}_u(\partial A_3,\partial A_3)$, that is, conditioned on $\mathrm{Cloth}_u(\partial A_3,\partial A_3)$, $\I^u_{A_3}$ and $\I^u\cap A_3^C$ are independent.

Inequality~\eqref{e_varpi} then implies, for $\hat{\psi}\in\mathcal{S}_u(\partial A_3,\partial A_3)$,
\begin{align}
\GP\big{[} \hat{\I}^{u(1-\eps)}_{A_1} \subseteq  \I^u_{A_1\mid\hat{\psi}}   \subseteq    \hat{\I}^{u(1+\eps)}_{A_1}   \big{]}
 & =\!\!\!\!\!\!\!\!\! \sum_{\hat{\zeta}\in\mathcal{S}_u(V,\partial A_2)} \!\!\!\!\GP\big{[} \hat{\I}^{u(1-\eps)}_{A_1} \subseteq  \I^u_{A_1\mid\hat{\psi}}   \subseteq    \hat{\I}^{u(1+\eps)}_{A_1}  \mid m_{\hat{\psi}}=\hat{\zeta} \big{]}\GP\big{[}  m_{\hat{\psi}}=\hat{\zeta}  \big{]}\nonumber\\
 & = \!\!\!\!\!\!\!\!\! \sum_{\hat{\zeta}\in\mathcal{S}_u(V,\partial A_2)} \!\!\!\!\GP\big{[} \hat{\I}^{u(1-\eps)}_{A_1} \subseteq  \I^u_{A_1\mid\hat{\zeta}}   \subseteq    \hat{\I}^{u(1+\eps)}_{A_1}   \big{]}\GP\big{[}  m_{\hat{\psi}}=\hat{\zeta}  \big{]}\nonumber\\
 & \geq   \Big(1-c \exp\Big(-\frac{c'}{2}\eps^2 u s^{a_{A_1}}\Big)\Big)\GP\big{[}  m_{\hat{\psi}}\in\mathcal{A} \big{]}\label{e_psi}.
\end{align}

Let $\mathcal{E}$ be the set of all $\hat{\psi}\in\mathcal{S}_u(\partial A_3,\partial A_3)$ such that
\begin{equation*}
\GP\big{[}  m_{\hat{\psi}}\in\mathcal{A}^C \big{]}\geq \sqrt{\IP^u_{V,\partial A_2}\big{[} \mathcal{A}^C\big{]}}.
\end{equation*}
Since
\begin{equation*}
\IP^u_{V,\partial A_2}\big{[} \mathcal{A}^C\big{]}=\int\GP\big{[}  m_{\hat{\psi}}\in\mathcal{A}^C \big{]}\IP^u_{\partial A_3,\partial A_3}\big{[} \d \hat{\psi}\big{]}\geq\IP^u_{\partial A_3,\partial A_3}\big{[}  \mathcal{E} \big{]}\sqrt{\IP^u_{V,\partial A_2}\big{[} \mathcal{A}^C\big{]}},
\end{equation*}
we have 
\begin{equation*}
\IP^u_{\partial A_3,\partial A_3}\big{[} \mathcal{E}\big{]}\leq \sqrt{\IP^u_{V,\partial A_2}\big{[} \mathcal{A}^C\big{]}}.
\end{equation*}
We have proved the following theorem, which implies Theorem~\ref{t_main1}:
\begin{theorem}
\label{t_main2}
Using the same notation as above, we have that, for constants $c,c'>0$, there exists a set $\mathcal{G}\in\sigma_u(\partial A_3, \partial A_3)$ such that
\begin{equation*}
\IP^u_{\partial A_3,\partial A_3}\big{[} \mathcal{G}\big{]}\geq 1-\exp\Big(-\frac{c'}{4}\eps^2 u s^{a_{A_1}}\Big),
\end{equation*}
and for all $\hat{\psi}\in\mathcal{G}$, 
\begin{equation}
\label{e_psi2}
\GP\big{[} \hat{\I}^{u(1-\eps)}_{A_1} \subseteq  \I^u_{A_1\mid\hat{\psi}}   \subseteq    \hat{\I}^{u(1+\eps)}_{A_1}   \big{]} \geq 1-c \exp\Big(-\frac{c'}{2}\eps^2 u s^{a_{A_1}}\Big).
\end{equation}
Moreover, for any increasing function $f$ on the interlacements set intersected with $A_1$, with $\sup |f| < M$, we have
\begin{align}
\label{e_conditionaldecoupling}
\lefteqn{\big(\IE(f( \I^{u(1-\eps)}_{A_1}))-cM\exp\big(-c'\eps^2 u s^{a_{A_1}}\big)\big)\1{\mathcal{G}} \leq  \IE(f(\I^{u}_{A_1})\mid \I^{u}_{A_3} )\1{\mathcal{G}}\nonumber} \phantom{*************************}
\\
\phantom{*****************}&\leq  \big(\IE (f( \I^{u(1+\eps)}_{A_1}))+cM\exp\big(-c'\eps^2 u s^{a_{A_1}}\big)\big)\1{\mathcal{G}}.
\end{align}
\end{theorem}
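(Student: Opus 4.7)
The plan is to take $\mathcal{G}:=\mathcal{E}^c$, where $\mathcal{E}\in\sigma_u(\partial A_3,\partial A_3)$ is the set already defined just above the statement, and then to deduce \eqref{e_psi2} from \eqref{e_psi} and deduce \eqref{e_conditionaldecoupling} from \eqref{e_psi2} via a standard sandwich/truncation argument. The bound on $\IP^u_{\partial A_3,\partial A_3}[\mathcal{G}]$ is already essentially present in the excerpt: from Proposition~\ref{t_varpi} we have
\begin{equation*}
\IP^u_{V,\partial A_2}[\mathcal{A}^C]\leq \exp\Big(-\frac{c'}{2}\eps^2 u s^{a_{A_1}}\Big),
\end{equation*}
and the displayed Markov-type inequality $\IP^u_{\partial A_3,\partial A_3}[\mathcal{E}]\leq\sqrt{\IP^u_{V,\partial A_2}[\mathcal{A}^C]}$ then gives
$\IP^u_{\partial A_3,\partial A_3}[\mathcal{G}]\geq 1-\exp\bigl(-\tfrac{c'}{4}\eps^2 u s^{a_{A_1}}\bigr)$, which is the first claim.

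Next I would verify \eqref{e_psi2} for $\hat{\psi}\in\mathcal{G}$. By the definition of~$\mathcal{E}$ we have $\GP[m_{\hat{\psi}}\in\mathcal{A}]\geq 1-\sqrt{\IP^u_{V,\partial A_2}[\mathcal{A}^C]}\geq 1-\exp\bigl(-\tfrac{c'}{4}\eps^2 u s^{a_{A_1}}\bigr)$. Plugging this into the chain of equalities \eqref{e_psi}, where the conditioning on $m_{\hat{\psi}}=\hat{\zeta}$ replaces the random clothesline process by a deterministic one (so that Proposition~\ref{t_varpi} applies pointwise for $\hat{\zeta}\in\mathcal{A}$), yields \eqref{e_psi2} after adjusting constants.

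Finally, to obtain the decoupling \eqref{e_conditionaldecoupling}, fix an increasing $f$ with $\sup|f|<M$ and a $\hat{\psi}\in\mathcal{G}$. On the event $E_{\hat\psi}:=\{\hat{\I}^{u(1-\eps)}_{A_1}\subseteq \I^u_{A_1\mid\hat{\psi}}\subseteq \hat{\I}^{u(1+\eps)}_{A_1}\}$, monotonicity of~$f$ in the interlacement configuration gives
\begin{equation*}
f(\hat{\I}^{u(1-\eps)}_{A_1})\;\leq\; f(\I^u_{A_1\mid\hat{\psi}})\;\leq\; f(\hat{\I}^{u(1+\eps)}_{A_1});
\end{equation*}
on the complementary event one has the trivial bound $|f|\leq M$, whose contribution is $\leq M\,\GP[E_{\hat\psi}^c]\leq cM\exp(-c'\eps^2us^{a_{A_1}})$ by~\eqref{e_psi2}. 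Taking $\GP$-expectations and using that under the coupling $\hat{\I}^{v}_{A_1}$ has the same law as $\I^v_{A_1}$ for every $v>0$, while $\I^u_{A_1\mid\hat{\psi}}$ has the same law as $\I^u_{A_1}$ conditioned on $\mathrm{Cloth}_u(\partial A_3,\partial A_3)=\hat\psi$ (which determines $\I^u_{A_3}$), I get the two-sided bound on $\IE(f(\I^u_{A_1})\mid\I^u_{A_3})$ for every $\hat\psi\in\mathcal{G}$; multiplying by $\1{\mathcal{G}}$ yields \eqref{e_conditionaldecoupling}.

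The only subtle step is the second one: one must be sure that the (deterministic) soft local time $G^{\hat\zeta}$ produced by any $\hat\zeta\in\mathcal{A}$ really does sandwich $\I^u_{A_1\mid\hat\zeta}$ between $\hat{\I}^{u(1\mp\eps)}_{A_1}$, i.e.\ that Corollary~\ref{c_couplez} can be applied to all three soft local times $G^\Sigma_{u(1-\eps)}$, $G^{\hat\zeta}$, $G^\Sigma_{u(1+\eps)}$ simultaneously using the \emph{same} realisation of the Poisson point process on~$\Sigma\times\R_+$; this is precisely what the joint control \eqref{e_moment6} over the three events $D^{\eps/4}_u,D^{\eps/4}_{u(1-\eps)},D^{\eps/4}_{u(1+\eps)}$ was designed for, so no new estimate is required.
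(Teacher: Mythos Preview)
Your proposal is correct and follows essentially the same route as the paper: the paper also takes $\mathcal{G}=\mathcal{E}^c$, reads off the bound on $\IP^u_{\partial A_3,\partial A_3}[\mathcal{G}]$ from the Markov-type inequality, and regards \eqref{e_psi2} and \eqref{e_conditionaldecoupling} as immediate consequences of \eqref{e_psi} together with the definition of~$\mathcal{E}$; you have simply spelled out the sandwich/truncation step that the paper leaves implicit. One minor imprecision: the clothesline $\hat\psi$ does not literally \emph{determine} $\I^u_{A_3}$; the relevant fact (stated in the paper just before \eqref{e_psi}) is that $\I^u_{A_1}$ and $\I^u_{A_3}$ are conditionally independent given $\mathrm{Cloth}_u(\partial A_3,\partial A_3)$, which is what allows the bound on $\IE(f(\I^u_{A_1})\mid \mathrm{Cloth}_u=\hat\psi)$ to control $\IE(f(\I^u_{A_1})\mid \I^u_{A_3})$.
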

We finish the section with a brief proof of Theorem~\ref{t_main3}.

\begin{proof}[Proof of Theorem~\ref{t_main3}]
Note that, on equation~\eqref{e_moment3},~$\delta$ can be any real number greater than~$0$, whereas in equation~\eqref{e_moment4}, we need to have $0<\delta<1$. Recall that $u'>u>0$.
We have, by substituting the appropriate~$\delta$ in~$\eqref{e_moment5}$ and ignoring the union bound term~$cr^{2d-2}$,
\begin{align*}
\GP\big[G^\Sigma_{u}(z)< G^\Sigma_{u+u'}(z)    \big] &\geq 1-\GP\big[G^\Sigma_{u}(z)> (u+u' 4^{-1})\capacity(V)\pi(\Xi(z))   \big]
\\
&\quad -\GP\big[  G^\Sigma_{u+u'}(z)  <2^{-1}(u+u')\capacity(V)\pi(\Xi(z)) \big]
\\
&\geq 1-\exp\left(-¨\frac{c}{4}(u+u')s^{a_{A_1}}\right)-\exp\left(-\frac{c}{16} \frac{(u')^2}{u^2} us^{a_{A_1}}\right)
\\
&\geq 1-\exp\left(-c' u' s^{a_{A_1}}\right).
\end{align*}
Now, proceeding in the same manner as we did in the proof of Theorem~\ref{t_main2}, we are able to prove Theorem~\ref{t_main3}.
\end{proof}
\appendix
\section{Technical estimates}
\label{appendix}
\subsection{Bounding the relevant probabilities}
\label{s_t1}
For $w_0\in\partial A_1$ and $y_0\in V$ we want to bound the supremum
\begin{equation}
\label{supprob}
\sup_{\substack{w'\in V \\ y'\in \partial A_2}}\IP_{w',y'}\big{[} \Xi(w',y')=(w_0,y_0) \big{]}
\end{equation}
from above. To do so we will bound the ``hanging'' probability $\IP_{w,y}\big{[} \Xi(w,y)=(w_0,y_0) \big{]}$ for arbitrary~$w\in V$ and~$y\in \partial A_2$.

Given a finite nearest neighbor path $\gamma$, we denote by $|\gamma|$ its length. We will say that a path $\gamma$ belongs to an event $E$ if $E$ occurs every time the simple random walk $(X_k,k\geq 0)$ 
first $|\gamma|$ steps coincide with $\gamma$. 
We also let $\IP_x\big{[}\gamma\big{]}$ denote the probability that the first $|\gamma|$ steps of the simple random walk started at $x$ coincide with $\gamma$.
  
In order to avoid a cumbersome notation we now introduce what, hopefully, will be a simpler way to denote our events of interest. For $w,y_0\in V$, $w_0\in\partial A_1$ and $y\in\partial A_2$ we define:
\begin{itemize}
\item$w\xrightarrow{1}w_0$: 
The collection of all finite nearest-neighbor trajectories starting at $w$ that do not reach neither $\partial A_1$ nor $\partial A_2$, except at its ending point $w_0\in\partial A_1$. 
Note that this collection can be thought of as the event where the simple random walk started at $w$ hits~$\partial A_1$ for the first time at $w_0$ before reaching~$\partial A_2$.
\\
\item$w_0\xrightarrow{2}y_0$: The collection of all finite nearest-neighbor trajectories starting at $w_0$ and ending at $y_0$ without reaching~$\partial A_2$.
\\
\item$y_0\xrightarrow{3}y$: The collection of all finite nearest-neighbor trajectories starting at $y_0$ that hit~$\partial A_2$ for the first time at $y$ before returning to $V$. 
Note that this collection can be thought of as the event where the simple random walk started at $y_0$ hits $\partial A_2$ before returning to $V$ and its entrance point in $\partial A_2$ is $y$.
\\
\item$w\xrightarrow{4}y$: The event where the entrance point in $\partial A_2$ of the simple random walk started at~$w$ is~$y$. 
This event clearly can also be regarded as a collection of simple random walk trajectories starting at $w$ and hitting $\partial A_2$ for the first time at $y$.
\end{itemize}
\begin{figure}[ht]
\centering
\includegraphics[scale = 1]{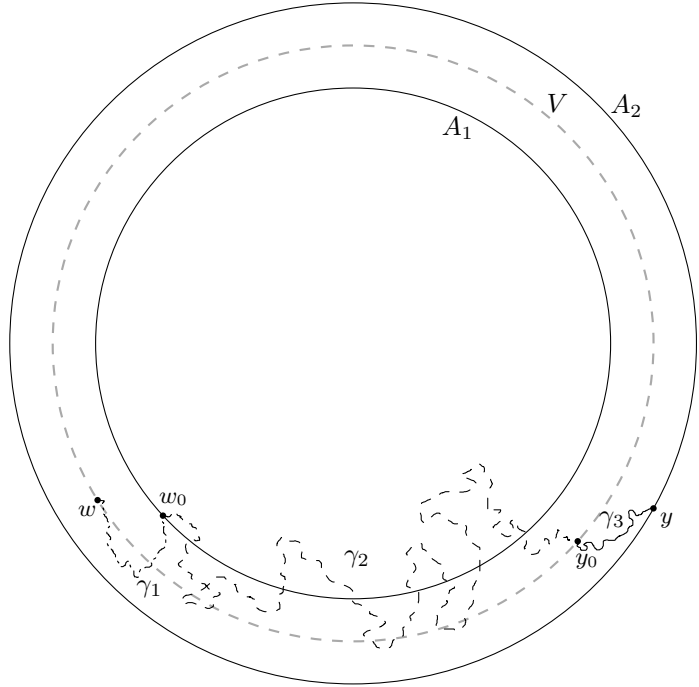}
\vspace{0.5cm}
\caption{$\gamma$ as the concatenation of the three paths $\gamma_1$, $\gamma_2$ and $\gamma_3$.}
\label{w1w02y03yfig1}
\end{figure}

We also let $w\xrightarrow{1}w_0\xrightarrow{2}y_0\xrightarrow{3}y$ be the ``concatenation'' of the first three collections, where the first trajectory's ending point becomes the second trajectory's
 starting point and so on. That is, if $\gamma\in w\xrightarrow{1}w_0\xrightarrow{2}y_0\xrightarrow{3}y$ 
then $\gamma$ is the concatenation of three distinct paths: $\gamma_1\in w\xrightarrow{1}w_0$, $\gamma_2 \in w_0\xrightarrow{2}y_0$, $\gamma_3 \in y_0\xrightarrow{3}y$. Note that, as an event,
\begin{equation*}
w\xrightarrow{1}w_0\xrightarrow{2}y_0\xrightarrow{3}y = \{\Xi(w',y')=(w_0,y_0)   \}.
\end{equation*}
With our new notation the hanging probability becomes
\begin{equation}
\IP_w\big{[} w\xrightarrow{1}w_0\xrightarrow{2}y_0\xrightarrow{3}y \mid w\xrightarrow{4}y \big{]}=\frac{\IP_w\big{[} w\xrightarrow{1}w_0\xrightarrow{2}y_0\xrightarrow{3}y  \big{]}}{\IP_w\big{[} w\xrightarrow{4}y \big{]}}.
\end{equation}
We have
\begin{equation}
\label{pathproba}
\begin{array} {e}
\IP_w\big{[} w\xrightarrow{1}w_0\xrightarrow{2}y_0\xrightarrow{3}y \big{]} & = & \sum_{\gamma\in w\xrightarrow{1}w_0\xrightarrow{2}y_0\xrightarrow{3}y}\frac{1}{2^{|\gamma|}} \\ & = & \sum_{\gamma_{1}\in w\xrightarrow{1}w_0}\frac{1}{2^{|\gamma_1|}}\sum_{\gamma_2 \in w_0\xrightarrow{2}y_0}\frac{1}{2^{|\gamma_2|}}\sum_{\gamma_3 \in y_0\xrightarrow{3}y}\frac{1}{2^{|\gamma_3|}}.
\end{array}
\end{equation}
Let us focus on the second sum, $\sum_{\gamma_2 \in w_0\xrightarrow{2}y_0}\frac{1}{2^{|\gamma_2|}}$, for a moment. Each path $\gamma_2 \in w_0\xrightarrow{2}y_0$ can be seen as the concatenation of one path $\gamma^{0}_{2}$ 
responsible for the walk's first visit to~$y_0$ and a sequence of paths $\gamma^{1}_{2},\dots,\gamma^{k}_{2}$ associated with the returns the walk makes to $y_0$ before hitting $\partial A_2$, see Figure~$\ref{w02y0fig1}$. So that
\begin{figure}[ht]
\centering
\includegraphics[scale = 1]{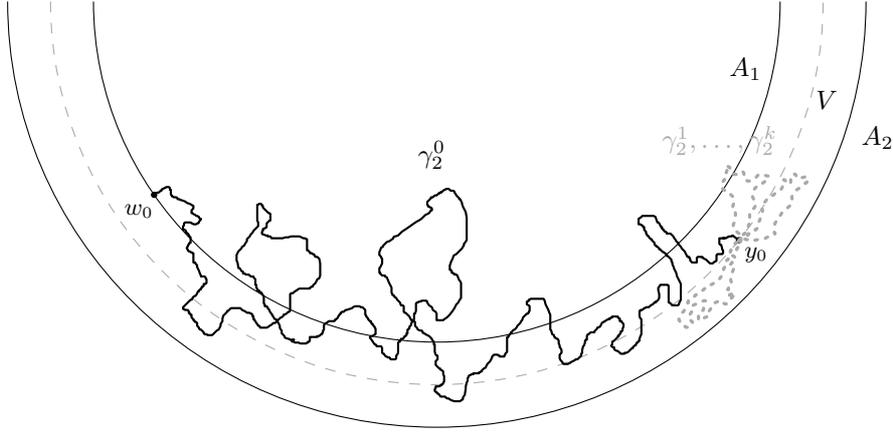}
\vspace{0.5cm}
\caption{$\gamma_{2}$ as the concatenation of the paths $\gamma^{0}_{2},\gamma^{1}_{2},\dots,\gamma^{k}_{2}$.}
\label{w02y0fig1}
\end{figure}
\begin{equation}
\label{pathprobb}
\sum_{\gamma_2}\IP_{w_0}\big{[}\gamma_2 \big{]}=\sum_{\gamma^{0}_{2}}\IP_{w_0}\big{[}\gamma^{0}_{2} \big{]}\sum_{k\geq 1}\sum_{\gamma^{1}_{2},\dots,\gamma^{k}_{2}}\IP_{y_0}\big{[}\gamma^{1}_{2} \big{]}\dots\IP_{y_0}\big{[}\gamma^{k}_{2} \big{]}.
\end{equation}
But for a fixed $k_0 > 0$, the last sum $\sum_{k\geq k_0}\sum_{\gamma^{1}_{2},\dots,\gamma^{k_0}_{2}}\IP_{y_0}\big{[}\gamma^{1}_{2} \big{]}\dots\IP_{y_0}\big{[}\gamma^{k_0}_{2} \big{]}$ equals the probability that the simple random walk started at $y_0$ returns 
to $y_0$ at least $k_0$ times before hitting $\partial A_2$. 
Since the walk is transient, we can use the strong Markov property to show that there exists a constant $0<c<1$ such that
\begin{equation}
\label{pathprobc}
\sum_{k\geq k_0}\sum_{\gamma^{1}_{2},\dots,\gamma^{k_0}_{2}}\IP_{y_0}\big{[}\gamma^{1}_{2} \big{]}\dots\IP_{y_0}\big{[}\gamma^{k_0}_{2} \big{]}<c^{k_0}.
\end{equation}
We have thus shown the existence of a constant $c>0$ such that
\begin{equation}
\label{pathprobd}
\sum_{\gamma^{0}_{2}}\IP_{w_0}\big{[}\gamma^{0}_{2} \big{]}\leq\sum_{\gamma_2}\IP_{w_0}\big{[}\gamma_2 \big{]}\leq c\sum_{\gamma^{0}_{2}}\IP_{w_0}\big{[}\gamma^{0}_{2} \big{]}
\end{equation}
where $\gamma^{0}_{2}$ represents any nearest neighbor path that starts at $w_0$ and ends at its only visit to~$y_0$, without ever reaching $\partial A_2$. Let us update our collection's definition in view of this last computation. We denote by
\begin{itemize}
\item$w_0\xrightarrow{2'}y_0$: The collection of all finite nearest-neighbor paths starting at $w_0$ and ending at their first visit to $y_0$, without hitting $\partial A_2$. This collection now can be thought of as the event where the simple random walk started at $w_0$ makes a visit to $y_0$ before hitting~$\partial A_2$. 
\end{itemize}
Combining $(\ref{pathproba})$ with $(\ref{pathprobd})$ we get
\begin{equation}
\label{pathprobe}
\IP_w\big{[} w\xrightarrow{1}w_0\xrightarrow{2}y_0\xrightarrow{3}y \big{]}\leq c  \IP_w\big{[} w\xrightarrow{1}w_0 \big{]}\IP_{w_0}\big{[} w_0\xrightarrow{2'}y_0\big{]}\IP_{y_0}\big{[}y_0\xrightarrow{3}y \big{]}.
\end{equation}
Our work will now reside in giving upper bounds for these three probabilities, besides giving a lower bound for $\IP_w\big{[} w\xrightarrow{4}y \big{]}$.

There will be two results about the simple random walk we will make extensive use of. The first, which can be seen as a direct consequence of Proposition 6.5.4 of \cite{LawlerLimic}, 
essentially says that the probability that the random walk started at a distance at least $h_0$ from a sphere of radius $h_0$ enters that sphere at a specific point is of order $h_{0}^{-(d-1)}$, 
that is, the hitting measure on a sphere is comparable to the uniform distribution when the starting point of the walk is sufficiently distant.  The second result is a simple application of the optional stopping theorem for submartingales and supermartingales, and can be seen in the proof of Lemma $8.5$ of \cite{PopovTeixeira}. 
We state it here for the reader's convenience.

\begin{lemma}
\label{l_popovteixeira}
Let $0<\rho_1<\rho_2$ be sufficiently large real numbers, and let $x\in B(0,\rho_2)\setminus B(0,\rho_1)$. Then
\begin{equation}
\frac{|x|^{-\big(d-\frac{5}{2}\big)}-(\rho_2-1)^{-\big(d-\frac{5}{2}\big)}}{(\rho_1+1)^{-\big(d-\frac{5}{2}\big)}-(\rho_2)^{-\big(d-\frac{5}{2}\big)}} \leq \IP_x\big{[} H_{\partial B(0,\rho_1)}<H_{\partial B(0,\rho_2)} \big{]} \leq \frac{|x|^{-(d-1)}-(\rho_2)^{-(d-1)}}{(\rho_1-1)^{-(d-1)}-(\rho_2)^{-(d-1)}}.
\end{equation}
\end{lemma}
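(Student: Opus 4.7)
The plan is to apply the optional stopping theorem to two carefully chosen radial functions of the walk that are, respectively, a super- and a submartingale in the annulus. The natural candidates are $f_\alpha(x) := |x|^{-\alpha}$ for two values of $\alpha$ on opposite sides of the critical value $d-2$, namely $\alpha = d-1$ and $\alpha = d-5/2$.

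I would first Taylor-expand $f_\alpha$ to show that its discrete Laplacian $\mathcal{L}f_\alpha(x) = (2d)^{-1}\sum_{i=1}^d[f_\alpha(x+e_i)+f_\alpha(x-e_i)-2f_\alpha(x)]$ equals, to leading order in $|x|$, $(2d)^{-1}\alpha(\alpha-d+2)|x|^{-\alpha-2}$, with remainder of order $|x|^{-\alpha-4}$. The sign of this leading term is governed by the sign of $\alpha-(d-2)$, so choosing $\alpha=d-1$ and $\alpha=d-5/2$ yields respectively a subharmonic and a superharmonic function, and hence (once the remainder is dominated throughout the annulus, which is where the ``$\rho_1,\rho_2$ sufficiently large'' hypothesis enters) a submartingale and a supermartingale under the SRW.

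Next, I would consider the stopping time $\tau = H_{\partial B(0,\rho_1)}\wedge H_{\partial B(0,\rho_2)}$, which is a.s.\ finite by transience of the walk for $d\geq 3$. The crucial ingredient for the specific form of the bounds is that the internal-boundary definition forces a precise overshoot control: on the event $A := \{H_{\partial B(0,\rho_1)} < H_{\partial B(0,\rho_2)}\}$, the vertex $X_\tau$ lies in $\partial B(0,\rho_1)$ and therefore has a neighbor outside $B(0,\rho_1)$, so by the triangle inequality $\rho_1-1 < |X_\tau| \leq \rho_1$; analogously on $A^c$ one has $\rho_2-1 < |X_\tau| \leq \rho_2$. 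Feeding these intervals into the optional-stopping inequality for each of the two $f_\alpha$'s (using the appropriate extreme of each interval to keep the inequality in the correct direction) and solving for $p := \IP_x[A]$ yields the stated upper and lower bounds; the $\rho_1\pm 1$ and $\rho_2$ vs.\ $\rho_2-1$ substitutions in the statement simply record which endpoint of each overshoot interval is used in which step.

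The main obstacle is to verify the (sub/super)martingale property uniformly throughout the annulus, not just asymptotically: this requires controlling the remainder in the several-variable Taylor expansion of $f_\alpha$ so that its absolute value is strictly dominated by the leading term $|\alpha(\alpha-d+2)|\,|x|^{-\alpha-2}/(2d)$ for every $x$ with $|x| \geq \rho_1$, giving an explicit quantitative version of ``$\rho_1$ sufficiently large''. After this check, the extraction of the bounds on $p$ from the optional stopping identity is routine algebra.
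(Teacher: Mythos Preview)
Your approach is exactly what the paper indicates: it gives no proof of its own, merely noting that the lemma is ``a simple application of the optional stopping theorem for submartingales and supermartingales'' taken from Lemma~8.5 of \cite{PopovTeixeira}, and your plan (radial test functions $|x|^{-(d-1)}$ and $|x|^{-(d-5/2)}$, Taylor expansion of the discrete Laplacian, overshoot control $\rho_i-1<|X_\tau|\le\rho_i$ on $\partial B(0,\rho_i)$) is the standard execution of that idea.

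One point to watch when you actually carry out the algebra: since $|x|^{-(d-1)}$ is the \emph{sub}martingale, the inequality $|x|^{-(d-1)}\le\IE_x\big[|X_\tau|^{-(d-1)}\big]$ together with the upper overshoot values $(\rho_i-1)^{-(d-1)}$ solves to a \emph{lower} bound on $p$, while the supermartingale $|x|^{-(d-5/2)}$ combined with the lower overshoot values yields the \emph{upper} bound. Thus the two exponents $d-1$ and $d-\tfrac52$ end up on the opposite sides of the inequality from how it is printed here; the displayed form appears to carry a transcription slip. Your method is right, but track the inequality directions yourself rather than assuming they match the printed statement verbatim.
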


\vspace{0.5cm}

\subsubsection{The hanging probabilities for the ball}

In this subsection we will be concerned with the sets $A_1^{\tiny\Circle},V^{\tiny\Circle}$ and $A_2^{\tiny\Circle}$, and the related simple random walk probabilities. 

\vspace{0.5cm}

$\boldsymbol{w\xrightarrow{1}w_0}$: Let $h_1$ be the Euclidean distance between $w$ and $w_0$. We look to $\Z^d$ as a subset of $\R^d$. Let $e_1,\dots,e_d$ be the canonical basis of $\R^d$. Without loss of generality we assume that~$w$ and~$w_0$  belong to the plane generated by the first vectors $e_1,e_2$. If $\rho,\Phi_1,\dots,\Phi_{d-1}$ are the corresponding spherical coordinates of $\R^d$, we let, for $i_1=1,\dots,\big\lfloor\frac{2\pi r}{s}\big\rfloor$ and $i_k=1,\dots,\big\lfloor\frac{\pi r}{s}\big\rfloor$, $k=2,\dots,d-1$:
\begin{equation}
E_{i_1,\dots,i_{d-1}}= \left\{ \begin{array}{l}
(\rho,\Phi_1,\dots,\Phi_{d-1}) \in\R^d,\text{ }r\leq\rho\leq r+2s,\text{ }\frac{(i_1-1)s}{2\pi r}\leq \Phi_1 \leq \frac{i_1 s}{2\pi r},\\ \text{ }\frac{(i_k -1)s}{\pi r}\leq \Phi_k \leq \frac{i_k s}{\pi r}\text{ for all } k=2,\dots,d-1
\end{array} \right\}
\end{equation}
We also let $C_1$ be a discrete ball of radius $s$ contained in $A_1$ in such a way  that it intersects~$\partial A_1$ only at $w_0$. We refer any reader skeptic about the existence of such discrete ball to \cite{PopovTeixeira}, Section~$8$. 
There is a constant $c_1>0$ such that the random walk started at~$w$ will have to cross at least $\big\lfloor \frac{c_1 h_1}{s} \big\rfloor$ sets of the form $E_{i_1,\dots,i_{d-1}}$ to reach $w_0$. Each time the walk reaches a set $E_{i_1 ',\dots,i_{d-1} '}$, the probability that it will reach another set of the form $E_{i_1,\dots,i_{d-1}}$ at  distance at least $s$ from $E_{i_1 ',\dots,i_{d-1} '}$, before hitting either $\partial A_1$ or $\partial A_2$, is bounded from above by a constant $0<c_2<1$, as can be seen using Donsker's Invariance Principle (see Section~$3.4$ of \cite{LawlerLimic}). 
Using the strong Markov property, we can show that the probability that the walk started at $w$ crosses at least  $\frac{c_1 h_1}{s}$ sets of the form $E_{i_1,\dots,i_{d-1}}$ 
before hitting~$\partial A_1\cup\partial A_2$ is smaller than $c_{2}^{\lfloor\frac{c_1 h_1}{s}\rfloor}$.

\begin{figure}[ht]
\centering
\includegraphics[scale = 1]{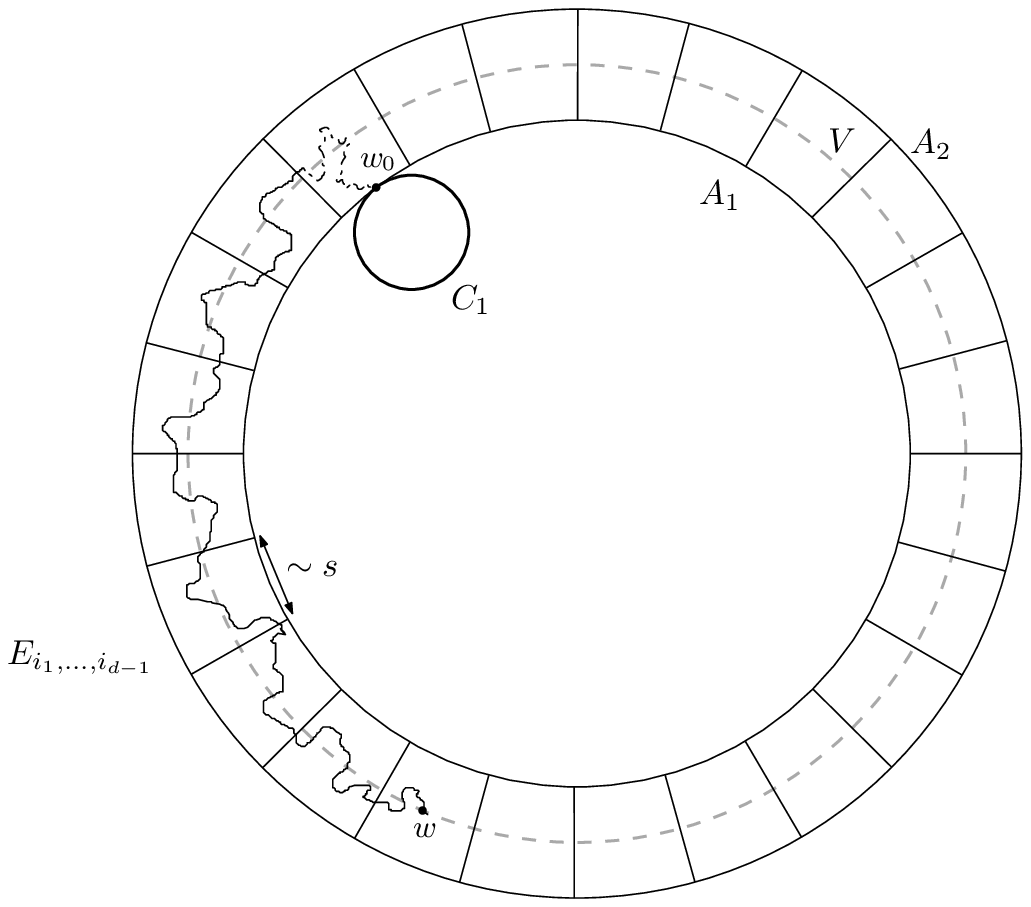}
\vspace{0.5cm}
\caption{A path belonging to $w\xrightarrow{1}w_0$ has to cross $\frac{c_1 h_1}{s}$ sets of the form $E_{i_1,\dots,i_{d-1}}$ before hitting $w_0$ in $C_1$.}
\label{w1w0fig1}
\end{figure}

We note that it is harder for a walk  started at some $x\in A_2^C\setminus A_1$ to hit $w_0$ before any other point in $\partial A_1$ than it is to hit $w_0$ before any other point in $C_1$,
\begin{equation*}
\IP_x\big{[}    X_{H_{\partial A_1}}=w_0     \big{]}\leq \IP_x\big{[}    X_{H_{\partial C_1}}=w_0     \big{]}.
\end{equation*}

We have already noted that the probability of hitting a discrete sphere of radius $s$ at a specific point at distance of order $s$, is of order $s^{-(d-1)}$, as can be seen in Proposition~$6.5.4$ of \cite{LawlerLimic}. In conjunction with last paragraph's argument, this shows the existence of  constants $c_3,c_4>0$ such that
\begin{equation}
\label{pathprobf}
\IP_w\big{[} w\xrightarrow{1}w_0 \big{]}\leq c_4 e^{\frac{-c_3 h_1}{s}}s^{-(d-1)}.
\end{equation}

$\boldsymbol{y_0\xrightarrow{3} y} $: We define $y\xrightarrow{3'} y_0 $ to be the event where the walk, started at $y$, hits $y_0$ in $V$ before reaching any other point in $V$ or $\partial A_2$. From the simple random walk's reversibility, we have
\begin{equation}
\label{pathprobg}
\IP_y\big{[} y\xrightarrow{3'}y_0 \big{]}=\IP_{y_0}\big{[} y_0\xrightarrow{3}y \big{]}.
\end{equation}

Let $C_1$ now be a discrete ball of radius $\frac{s}{2}$ contained in $A_2$ in such a way that $C_1\cap\partial A_2=\{y\}$, and let $C_2$ be a discrete ball of radius~$\frac{s}{3}$ concentric with $C_1$. We can use Lemma~$\ref{l_popovteixeira}$ and some elementary calculus to show that if the simple random walk starts at $y$, 
the probability that it hits $C_2$ before hitting $C_{1}$ is of order $s^{-1}$. Using the strong Markov property, the argument then continues the same way as the argument for the bound 
for $\IP_w\big{[} w\xrightarrow{1}w_0 \big{]}$. Let $h_3$ be the Euclidean distance between $y_0$ and $y$. Then there are constants $c_1,c_2>0$ such that
\begin{equation}
\label{pathprobh}
\IP_w\big{[} y\xrightarrow{3}y_0 \big{]}\leq c_1 e^{\big\lfloor\frac{-c_2 h_3}{s}\big\rfloor}s^{-(d-1)}s^{-1}.
\end{equation}

$\boldsymbol{w_0\xrightarrow{2'}y_0}$: Let $h$ be the Euclidean distance between $w_0$ and $y_0$. Assume $h>20s$. Let~$w_1$ be the   point  on $\partial A_2$ closest to $w_0$. We let $C_2$ now be a discrete ball of radius~$\frac{h}{6}$ that intersects $\partial A_2$ only at $w_1$ and lies outside of $A_2^C$. We let $C_3$ be the discrete ball of radius~$\frac{h}{3}$ that is concentric with $C_2$. In order for the walk started at $w_0$ to reach $y_0$ without leaving $A_2^C$, it first has to reach $\partial C_3$ before hitting $C_2$. Lemma~$\ref{l_popovteixeira}$ and some calculus show that the probability of such event is of order $\frac{s}{h}$.

\begin{figure}[ht]
\centering
\includegraphics[scale = 1]{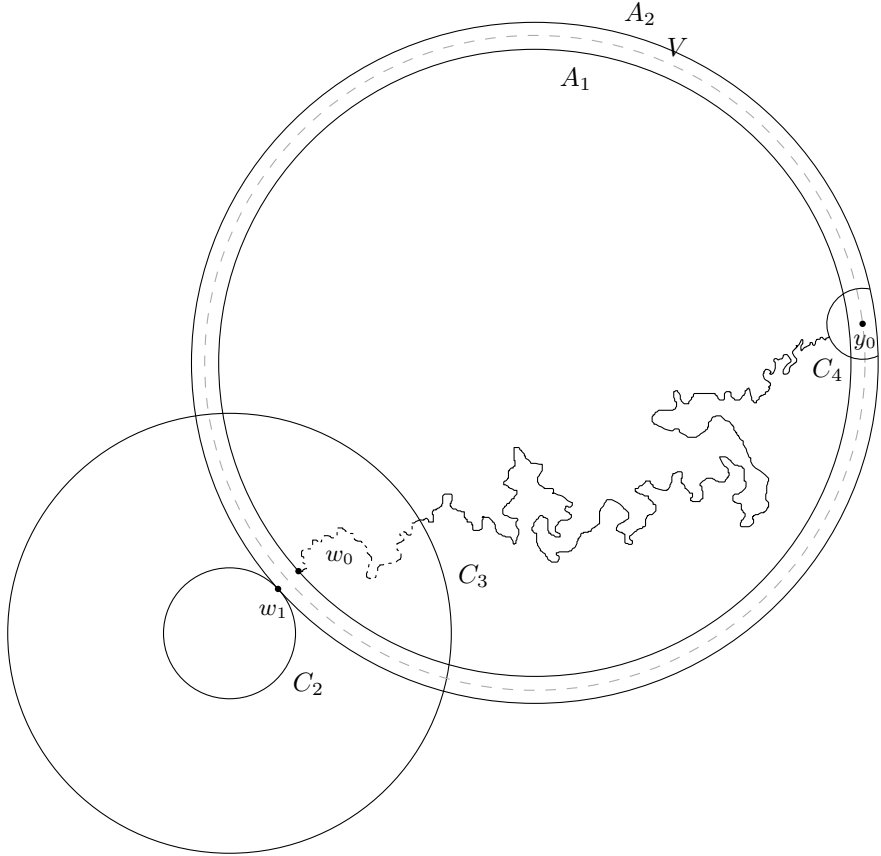}
\vspace{0.5cm}
\caption{A walk started at $w_0$ has to reach $\partial C_3$ before $\partial C_2$ and then reach $C_4\setminus A_{2}^{C}$ in order to reach $y_0$.}
\label{w02y0fig2}
\end{figure}

In order for the walk to reach a $y_0$, it has first to reach a sphere $\partial C_4$ of radius $3s$ centered at $y_0$. Conditioned on the event where $\partial C_4$ is reached before the walk hits $\partial A_2$, the probability that the walk reaches $y_0$ before reaching $\partial A_2$ is smaller than $c s^{-(d-2)}$, for a constant $c>0$, as can be seen using the Green's function estimate \eqref{greenestimate}.

\begin{figure}[ht]
\centering
\includegraphics[scale = 1]{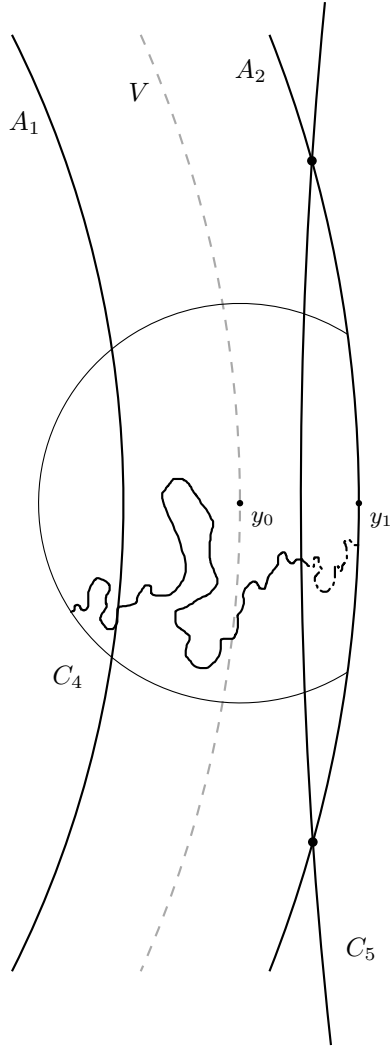}
\vspace{0.5cm}
\caption{We show that, if starting at a distant point $w_2$, the probability of the simple random walk hitting $C_4\setminus A_{2}^{C}$ and the probability of hitting $C_5\cap A_2^C$ are comparable.}
\label{w02y0fig3}
\end{figure}

Let $y_1$ be the   point  on $\partial A_2$ closest to $y_0$. Let $C_5$ be a discrete ball of radius $h$ such that the intersection $C_5\cap\partial A_2$ has diameter $6s$ and center of mass as close as possible to $y_1$. By Donsker's Invariance Principle, there is a 
constant $c_1>0$ such that a simple random walk started at any point in $\partial C_4\cap A_2^C$ has probability at least $c_1$ of reaching $C_5\cap\partial A_2$ before $\partial A_2 \setminus C_5$. 
Let $w_2\in A_2^C$ be any point at distance at least $\frac{h}{2}$ from $y_0$. For a simple random walk starting at $w_2$ we define the events:

$D_{C_5\cap \partial A_2}:=\{H_{C_5\cap\partial A_2}\leq H_{\partial A_2\setminus C_5}\}$; the event where the simple random walk reaches $C_5\cap\partial A_2$ before reaching any other point in $\partial A_2$.

$D_{\partial C_5\cap A_2^C}:=\{H_{\partial C_5\cap A_2^C}\leq H_{\partial A_2\setminus C_5}\}$; the event where the simple random walk reaches $\partial C_5\cap A_2^C$ before reaching any other point in $\partial A_2$.

$D_{C_4\setminus A_{2}}:=\{H_{C_4\setminus A_{2}}\leq H_{\partial A_2\setminus C_4}\}$; the event where the simple random walk reaches $C_4\setminus A_{2}$ before reaching any other point in $\partial A_2$.

$D_{y_0}:=\{H_{y_0}\leq H_{\partial A_2}\}$; the event where the simple random walk reaches $y_0$ before hitting $\partial A_2$.

From the above discussion it is clear that:
\begin{equation}
\label{pathprobi}
\IP_{w_2}\big{[} D_{C_5\cap \partial A_2} \big{]} \leq \IP_{w_2}\big{[} D_{\partial C_5\cap A_2^C} \big{]}   ,
\end{equation}
\begin{equation}
\label{pathprobj}
\IP_{w_2}\big{[} D_{y_0} \big{]} = \IP_{w_2}\big{[} D_{y_0}\mid D_{C_4\setminus A_{2}} \big{]} \IP_{w_2}\big{[} D_{C_4\setminus A_{2}}\big{]}   ,
\end{equation}
\begin{equation}
\label{pathprobk}
\IP_{w_2}\big{[} D_{C_4\setminus A_{2}^{C}} \big{]} \leq \frac{1}{c_1}\IP_{w_2}\big{[} D_{C_5\cap \partial A_2} \big{]}   .
\end{equation}
Using Proposition 6.5.4 of \cite{LawlerLimic} we can see that there is a constant $c>0$ such that
\begin{equation}
\label{pathprobl}
\IP_{w_2}\big{[} D_{\partial C_5\cap A_2^C} \big{]} \leq c\frac{s^{d-1}}{h^{d-1}}  .
\end{equation}
Collecting the estimates $(\ref{pathprobi},\ref{pathprobj},\ref{pathprobk},\ref{pathprobl})$, using the strong Markov property, and bounding 
\begin{equation*}
\IP_{w_2}\big{[}D_{y_0}\mid~D_{C_4\setminus A_{2}}\big{]}
\end{equation*}
by the Green's function estimate \eqref{greenestimate}, we see that there is a constant $c>0$ such that
\begin{equation}
\label{pathprobm}
\IP_{w_0}\big{[} w_0\xrightarrow{2'}y_0 \big{]}\leq c \frac{s}{h}\cdot\frac{s^{d-1}}{h^{d-1}}s^{-(d-2)}  = c\frac{s^{2}}{h^{d}}  .
\end{equation}
If $h<20s$ the result follows after using Green's Function.

We also provide a lower bound for $\IP_{w_0}\big{[} w_0\xrightarrow{2'}y_0 \big{]}$, which we will need later. Suppose $h\leq \frac{r}{2}$. Let $C_3 '$ be a discrete ball of radius $2h$ contained in $A_2^C$ that intersects $\partial A_2$ only at~$w_1$. Let $C_2 '$ be a discrete ball of radius $\frac{h}{2}$ concentric with $C_3 '$. 
Let us describe an event of probability greater than $c_1\frac{s^2}{h^d}$, for some constant $c_1>0$, that is contained in $ w_0\xrightarrow{2'}y_0$. First the walk needs to hit~$\partial C_2 '$ before hitting $\partial C_3 '$. The probability of such event is of 
order $\frac{s}{h}$, as can be seen using Lemma~$\ref{l_popovteixeira}$. We will denote by $w_2$ the point in which the walk enters $\partial C_2 '$.

We define $C_5 '$ to be the discrete ball  of radius $2h$ such that its center lies inside $A_2^C$ and the intersection $C_5 '\cap \partial A_2$ coincides with $C_4 \cap \partial A_2$. 
In addition to all events defined in the proof of the upper bound for $\IP_{w_0}\big{[} w_0\xrightarrow{2'}y_0 \big{]}$, we define the event, for a simple random walk starting in the interior of $C_5 '$: \\

$D_{\partial C_5 '\setminus  A_2^C}:=\{H_{\partial C_5 '\setminus A_2^C}\leq H_{\partial A_2\setminus C_5 '}\}$; the event where the simple random walk started in the interior of $C_5 '$ reaches $\partial C_5 '\setminus A_2^C$ before reaching $ \partial A_2\setminus C_5 ' $. \\

We note that $w_2$ is in the interior of $C_5 '$ and that $D_{\partial C_5 '\setminus A_2^C}\subset D_{\partial C_4\setminus A_{2}}$. We then have:
\begin{equation}
\label{pathprobp}
\begin{array}{e}
\IP_{w_0}\big{[} w_0\xrightarrow{2'}y_0 \big{]} & \geq & \sum_{w_2\in\partial C_2 '}\IP_{w_0}\big{[} H_{\partial C_2 '}< H_{\partial C_3 '},\space X_{H_{\partial C_2 '}}=w_2 \big{]} \IP_{w_2}\big{[} D_{y_0} \big{]}
\\
& = & \sum_{w_2\in\partial C_2 '}\IP_{w_0}\big{[} H_{\partial C_2 '}< H_{\partial C_3 '},\space X_{H_{\partial C_2 '}}=w_2 \big{]} \IP_{w_2}\big{[} D_{y_0} \mid D_{C_4\setminus A_{2}}\big{]}\IP_{w_2}\big{[} D_{C_4\setminus A_{2}}\big{]}
\\
& \geq & \sum_{w_2\in\partial C_2 '}\IP_{w_0}\big{[} H_{\partial C_2 '}< H_{\partial C_3 '},\space X_{H_{\partial C_2 '}}=w_2 \big{]} \IP_{w_2}\big{[} D_{y_0} \mid D_{C_4\setminus A_{2}}\big{]}\IP_{w_2}\big{[} D_{\partial C_5 ' \setminus A_{2}}\big{]}.
\end{array}
\end{equation}
Using Harnack's Principle $($Theorem $6.3.9$ 
of \cite{LawlerLimic}$)$ we are able to show the existence of a constant $c_2>0$ such that
\begin{equation}
\IP_{w_2}\big{[} D_{\partial C_5 ' \setminus A_{2}}\big{]} \geq c_2\frac{s^{d-1}}{h^{d-1}}.
\end{equation}
With this and $(\ref{pathprobp})$ we can find a constant $c_1>0$ such that
\begin{equation}
\IP_{w_0}\big{[} w_0\xrightarrow{2'}y_0 \big{]} \geq c_1\frac{s^{2}}{h^{d}} .
\end{equation}
If $h\geq\frac{r}{2}$ we simply replace the balls $C_3 '$ and $C_5 '$ by $A_2^C$, the ball $C_2'$ by a ball concentric with $A_2^C$ but with the diameter halves, and continue the proof identically.

$\boldsymbol{w\xrightarrow{4}y}$: Let $w_3$ be the closest point to $w$ in $\partial A_2$. Let $h_4$ be the Euclidean distance between $w$ and $y$, and suppose $h_4\leq \frac{r}{2}$. Let $C_6$ be a discrete ball of radius $2h_4$ contained in $A_2^C$ that intersects $\partial A_2$ 
only at $w_3$. Let $C_7$ be a discrete ball of radius $\frac{h_4}{2}$ concentric with $C_6$. 
Then again Lemma~$\ref{l_popovteixeira}$ and some calculus show that the probability that a simple random walk started at $w$ will reach~$\partial C_7$ before reaching~$\partial C_6$ is less than the probability that the same walk will reach~$\partial C_7$ before hitting~$\partial A_2$ and bigger than~$c_1\frac{s}{h_4}$, for some constant~$c_1>0$.

Let $C_8$ be a discrete ball of radius $2h_4$ contained in $A_2^C$ that intersects $\partial A_2$ only at~$y$. Let $y_3$ be a fixed point in $\partial C_7$. Then the probability that a simple random walk started at $y_3$ hits $y$ before hitting any other point in $\partial C_8$ is smaller than the probability that the same walk 
reaches $y$ before any other point in $\partial A_2$ and bigger than $\frac{c_2}{h_{4}^{d-1}}$, for some constant~$c_2>0$, by 
the Harnack's Principle $($Theorem $6.3.9$ of \cite{LawlerLimic}$)$ and Lemma~$6.3.7$ of~\cite{LawlerLimic}. Figure~$\ref{w4yfig3}$ illustrates the argument. Using the strong Markov property, we then have
\begin{equation}
\label{pathprobn}
\IP_{w_0}\big{[} w\xrightarrow{4}y \big{]} \geq c\frac{s}{h_4}h_{4}^{-(d-1)}  .
\end{equation}
\begin{figure}[ht]
\centering
\includegraphics[scale = .7]{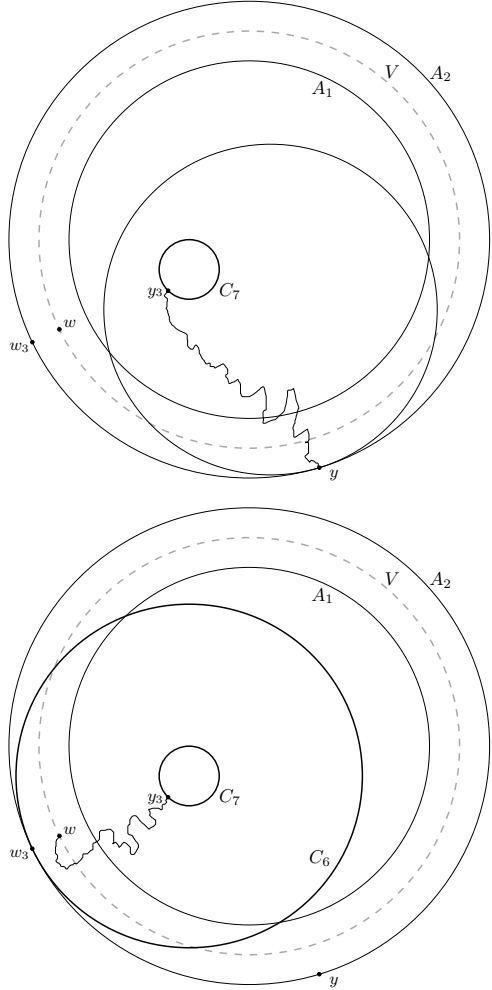}
\vspace{0.5cm}
\caption{We can give a lower bound for $\IP_{w_0}\big{[} w\xrightarrow{4}y \big{]}$ by describing the event where the walk started at $w$ reaches a small sphere $C_6$ before reaching $\partial C_7$ and then hits $y$ before any other point in $\partial C_8$. }
\label{w4yfig3}
\end{figure}

If $h_4\geq\frac{r}{2}$ we simply replace the balls $C_6$ and $C_8$ by $A_2^C$, the ball $C_7$ by an discrete ball concentric with $A_2^C$ but with half the diameter, and continue the proof identically.

Let us now provide an upper bound for $\IP_{w_0}\big{[} w\xrightarrow{4}y \big{]}$, which will be needed in the next section. We let $C_6 '$ be a discrete ball of radius $\frac{h_4}{6}$ lying outside $A_2^C$ and intersecting $\partial A_2$ only at $w_3$. 
We also let $C_7 '$ be a discrete ball of radius $\frac{h_4}{3}$ concentric with $C_6 '$. Finally we let $C_8 '$ be a discrete ball of radius $h_4$ lying outside $A_2^C$ and intersecting $\partial A_2$ only at $y$.

Then, for the simple random walk started at $w$ to hit $\partial A_2$ at $y$, it has first to reach~$\partial C_7 '$ before hitting~$\partial C_6 '$ and then hit~$y$ before any other point 
in $\partial C_8 '$. As we have already seen, the probability of the first event is of order~$\frac{s}{h_4}$ and the probability of the latter is of order~$h_{4}^{-(d-1)}$. This way, we can find a constant~$c>0$ such that:
\begin{equation}
\IP_{w_0}\big{[} w\xrightarrow{4}y \big{]} \leq c\frac{s}{h_4}h_{4}^{-(d-1)}.
\end{equation}

Finally, using ($\ref{pathprobh}$) and ($\ref{pathprobi}$) we see that the supremum in ($\ref{supprob}$) is reached when~$h_1$ and~$h_3$ are of order $s$. This way, $h$ should have the same order as $h_4$. 
Gathering the bounds ($\ref{pathprobh}$), ($\ref{pathprobi}$), ($\ref{pathprobm}$) and ($\ref{pathprobn}$) we have, for a constant $c>0$
\begin{equation}
\label{pathprobo}
\sup_{\substack{w\in V \\ y\in \partial A_2}}\IP_w\big{[} w\xrightarrow{1}w_0\xrightarrow{2}y_0\xrightarrow{3}y \mid w\xrightarrow{4}y \big{]}\leq c s ^{-2(d-1)}.
\end{equation}

We have proved the following proposition:

\begin{proposition}
\label{p_boundprocircle}
Regarding the sets $A_1^{\tiny\Circle},V^{\tiny\Circle}$ and $A_2^{\tiny\Circle}$, we have that, using the notation defined above, for some constants $c_k>0$, $k=1,\dots,9$, the following bounds are valid:
\begin{equation*}\IP_w\big{[} w\xrightarrow{1}w_0 \big{]}\leq c_1 e^{\frac{-c_2 h_1}{s}}s^{-(d-1)},\end{equation*}
\begin{equation*}\IP_w\big{[} y\xrightarrow{3}y_0 \big{]}\leq c_3 e^{\frac{-c_4 h_3}{s}}s^{-(d-1)}s^{-1},\end{equation*}
\begin{equation*}c_5\frac{s^{2}}{h^{d}} \leq \IP_{w_0}\big{[} w_0\xrightarrow{2'}y_0 \big{]} \leq c_6\frac{s^{2}}{h^{d}},\end{equation*}
\begin{equation*}  c_7\frac{s}{h_{4}^d}\leq\IP_{w_0}\big{[} w\xrightarrow{4}y \big{]} \leq c_8\frac{s}{h_{4}^d} .\end{equation*}
\begin{equation*}\sup_{\substack{w\in V \\ y\in \partial A_2}}\IP_w\big{[} w\xrightarrow{1}w_0\xrightarrow{2}y_0\xrightarrow{3}y \mid w\xrightarrow{4}y \big{]}\leq c_9 s ^{-2(d-1)} .\end{equation*}
\end{proposition}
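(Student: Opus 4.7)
The proposition assembles four random-walk estimates on the spherical annulus geometry. My plan is to handle each one by cutting the relevant path into a small number of independent-like stages via the strong Markov property, and then to estimate each stage using one of a short list of tools: (i) the uniform sphere-hitting-measure estimate from Proposition~6.5.4 of~\cite{LawlerLimic}; (ii) Lemma~\ref{l_popovteixeira} for crossings of concentric annuli; (iii) the Green's function estimate~\eqref{greenestimate}; and (iv) Harnack's inequality (Theorem~6.3.9 of~\cite{LawlerLimic}) to transfer hitting probabilities between spheres of comparable scale. All four bounds above the supremum estimate will follow this common pattern.

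For $\IP_w[w\xrightarrow{1}w_0]$, I partition the shell $A_2^C\setminus A_1$ into angular cells of diameter $\asymp s$. By Donsker's invariance principle and the strong Markov property, from any cell the walk exits through $\partial A_1\cup\partial A_2$ before entering an adjacent cell with probability bounded below by a constant $1-c$; since reaching a cell near $w_0$ requires crossing $\asymp h_1/s$ cells, this gives the factor $e^{-c_2 h_1/s}$. Once within distance $\asymp s$ of $w_0$, applying tool~(i) to a radius-$s$ ball inscribed in $A_1$ tangent at $w_0$ produces the final $s^{-(d-1)}$. The bound on $\IP_{y_0}[y_0\xrightarrow{3}y]$ follows by reversibility (converting it to $\IP_y[y\xrightarrow{3'}y_0]$) and the same shell-crossing argument, plus one extra factor $s^{-1}$ coming from the initial escape of $y$ a distance $\asymp s$ off $\partial A_2$, which is tool~(ii) applied to two balls of radii $s/2$ and $s/3$.

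For the upper bound on $\IP_{w_0}[w_0\xrightarrow{2'}y_0]$, I chain three events: (a) the walk must escape from near $w_0$ outward by distance $\asymp h/3$ (measured from the projection of $w_0$ on $\partial A_2$) without crossing $\partial A_2$, which by tool~(ii) has probability $\asymp s/h$; (b) from there, hit a sphere of radius $\asymp s$ around $y_0$, with probability $(s/h)^{d-1}$ from tool~(i); (c) once near $y_0$, actually hit $y_0$, with probability $\asymp s^{-(d-2)}$ by~\eqref{greenestimate}. Multiplying gives $s^2/h^d$. The matching lower bound follows by constructing explicit nested balls $C_3',C_2',C_5'$ realising each of (a)–(c) and invoking Harnack's inequality to compare the hitting probability on a piece of a sphere of radius $\asymp h$ inside $A_2^C$ with that on a comparable piece of $\partial A_2$. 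The bound on $\IP_w[w\xrightarrow{4}y]$ follows by only the two-stage version of the same argument: escape by distance $\asymp h_4$ (probability $\asymp s/h_4$) and then hit the specific point $y$ on $\partial A_2$ (probability $\asymp h_4^{-(d-1)}$, again using Harnack and tool~(i)).

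Finally, for the combined supremum bound, I use the Markov decomposition
\[
\IP_w[w\xrightarrow{1}w_0\xrightarrow{2}y_0\xrightarrow{3}y] \le c\,\IP_w[w\xrightarrow{1}w_0]\,\IP_{w_0}[w_0\xrightarrow{2'}y_0]\,\IP_{y_0}[y_0\xrightarrow{3}y],
\]
where the constant $c$ absorbs the geometric number of visits to $y_0$ before the walk leaves $V$ (finite by transience), divide by the lower bound on $\IP_w[w\xrightarrow{4}y]$, and observe that the resulting ratio is maximised when $h_1,h_3\asymp s$, in which case the exponential factors are $O(1)$ and $h\asymp h_4$, so all $h$-dependent factors cancel and one is left with $s^{-2(d-1)}$. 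The main obstacle in this program is the lower bound for $\IP_{w_0}[w_0\xrightarrow{2'}y_0]$: one cannot simply reverse the upper-bound argument, but must exhibit an explicit sub-event of positive probability of the right order, which forces the carefully nested ball construction and a nontrivial appeal to Harnack's inequality to move hitting probabilities between $\partial C_5'$ and $\partial C_4$ without losing powers of $h/s$.
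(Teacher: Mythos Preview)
Your proposal is correct and follows essentially the same route as the paper: the same angular-cell decomposition for $w\xrightarrow{1}w_0$ and $y_0\xrightarrow{3}y$, the same three-stage chain (escape by $\asymp h$, approach $y_0$ to within $\asymp s$, hit $y_0$) for $w_0\xrightarrow{2'}y_0$, the same two-stage argument for $w\xrightarrow{4}y$, and the same optimisation $h_1,h_3\asymp s$ for the supremum. One small imprecision: in your step~(b) of the upper bound on $\IP_{w_0}[w_0\xrightarrow{2'}y_0]$, the factor $(s/h)^{d-1}$ does not follow from tool~(i) alone (unconstrained hitting of a radius-$s$ ball from distance $h$ costs only $(s/h)^{d-2}$); the paper obtains the extra $s/h$ from the constraint of avoiding $\partial A_2$ via a comparison of $\{H_{C_4}<H_{\partial A_2}\}$ with hitting a patch of $\partial A_2$ of diameter $\asymp s$ on a sphere $C_5$ of radius $h$, using Donsker's invariance---exactly the kind of Harnack/comparison manoeuvre you already invoke for the lower bound.
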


\subsubsection{The hanging probabilities for the smoothed hypercube}

In this subsection we will focus on sets $A_1^{\tiny\Square},V^{\tiny\Square}$ and $A_2^{\tiny\Square}$, and the related simple random walk probabilities. 

\vspace{0.1cm}

$\boldsymbol{w\xrightarrow{1}w_0}$: We will essentially use the same argument used when the underlying sets were balls. We assume without loss of generality that $\mathfrak{H}_{r+2s}$ is centered at the origin, and let $h_1:=\dist (w_0,y_0)$. 
We will subdivide the set $A_2^{C}\setminus A_1$ in sets of diameter of order $ s$ in such a way that for a simple random walk trajectory started at $w$ to reach $w_0$ it will first have to cross a number of order $\frac{h_1}{s}$ of these sets.

Given $j\in\{1,\dots,d\}$, $(m_1,\dots,m_d)\in\{-1,1\}^d$, $k\in\{1,\dots,j-1,j+1,\dots,d\}$, and $i_k\in\{1,\dots,\big\lfloor\frac{r}{s}\big\rfloor \}$, we define
\begin{equation*}
\begin{array}{e}  
E_{j,i_1,\dots,i_{j-1},i_{j+1},\dots,i_d}^{(m_1,\dots,m_d)} & = &
\Big\{     
(x_1,\dots,x_d)\in\Z^d;\\ &   & \quad x_j\in[\min\{m_j 2^{-1}r,m_j (2^{-1}r+2s)\},\max\{m_j 2^{-1}r,m_j (2^{-1}r+2s)\}],\\ &   &  \quad x_k\in[m_k (i_k-1) s,m_k i_k s]\cup[m_k i_k s,m_k (i_k-1)s].
\Big\} 
\end{array}
\end{equation*}
\begin{figure}[ht]
\centering
\includegraphics[scale = 1]{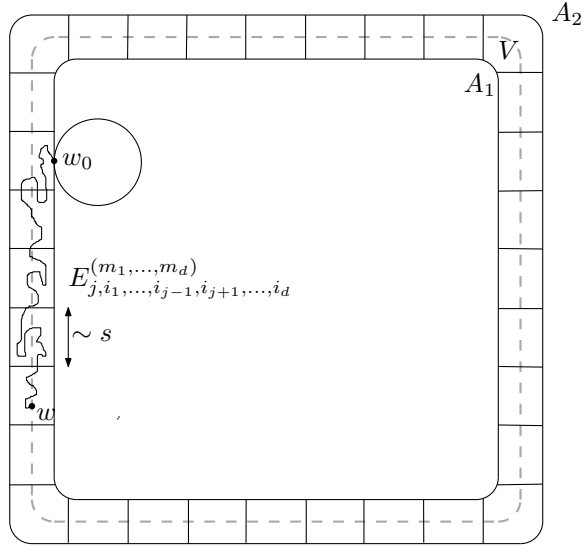}
\caption{A path belonging to $w\xrightarrow{1}w_0$ has to cross $\frac{c_1 h_1}{s}$ sets of the form $E_{j,i_1,\dots,i_{j-1},i_{j+1},\dots,i_d}^{(m_1,\dots,m_d)}$ before hitting $w_0$ in $C_1'$.}
\label{w1w0fig2}
\end{figure}
so that there exists a $c_1 >0$ such that in order for the walk started at $w$ to hit $w_0$ in~$A_1$, it will first have to cross at least~$\big\lfloor \frac{c_1  h_1}{s} \big\rfloor$ sets of the form $E_{j,i_1,\dots,i_{j-1},i_{j+1},\dots,i_d}^{(m_1,\dots,m_d)}$.  Each time the walk reaches a set $E_{j',i_1',\dots,i_{j-1}',i_{j+1}',\dots,i_d'}^{(m_1',\dots,m_d')}$, the probability that 
it will reach another set of the form $E_{j,i_1,\dots,i_{j-1},i_{j+1},\dots,i_d}^{(m_1,\dots,m_d)}$ at  distance at least $s$ from $E_{j',i_1',\dots,i_{j-1}',i_{j+1}',\dots,i_d'}^{(m_1',\dots,m_d')}$, 
before hitting either $\partial A_1$ or $\partial A_2$, is bounded from above by a constant $0<c_2 <1$, as can be seen using Donsker's Invariance Principle. 
Using the strong Markov property, we see that the probability that the walk started at $w$ crosses $\big\lfloor \frac{c_1  h_1}{s} \big\rfloor$ sets of the form $E_{j,i_1,\dots,i_{j-1},i_{j+1},\dots,i_d}^{(m_1,\dots,m_d)}$ is bounded from above by $c_{2}^{\lfloor\frac{c_1 h_1}
{s}\rfloor}$. See Figure~$\ref{w1w0fig2}$.

Let $C_1'$ be a discrete ball of radius $s$ contained in $A_1$ such that $C_1'\cap\partial A_1~=~{w_0}$. Recall that the probability that a simple random walk started at a distance of
 order $s$ from $C_1'$ will hit $C_1'$ at $w_0$ is of order $s^{-(d-1)}$, and that it is harder for a walk started at $x\in A_2^C\setminus A_1$ to first hit $A_1$ at $w_0$ then it is for the same walk to first hit $C_1'$ at $w_0$, that is,
\begin{equation*}
\IP_x\big{[}    X_{H_{\partial A_1}}=w_0     \big{]}\leq \IP_x\big{[}    X_{H_{\partial C_1'}}=w_0     \big{]}.
\end{equation*}
In conjunction with last paragraph's argument and the strong Markov property, this shows the existence of a constant $c_3,c_4>0$ such that
\begin{equation}
\label{pathprobfsquare}
\IP_w\big{[} w\xrightarrow{1}w_0 \big{]}\leq c_3 e^{\frac{-c_4  h_1}{s}}s^{-(d-1)}.
\end{equation}

$\boldsymbol{y_0\xrightarrow{3} y} $: The proof of this 
bound is essentially the same as that of the corresponding bound in the case when the underlying sets are balls instead of smoothed hypercubes. 
We have, for some $c_1,c_2>0$, and $h_3:=\dist(y,y_0)$,
\begin{equation}
\label{pathprobhsquare}
\IP_w\big{[} y\xrightarrow{3}y_0 \big{]}\leq c_1 e^{\big\lfloor\frac{-c_2  h_3}{s}\big\rfloor}s^{-(d-1)}s^{-1}.
\end{equation}

$\boldsymbol{w_0\xrightarrow{2'}y_0}$: Let $h$ denote the Euclidean distance between $w_0$ and $y_0$. If $h<100s$, a simple application of the 
Green's function bound gives the desired result. We then assume $h>100s$. Define $\tilde B_{x}$ to be the discrete ball in the $\ell_\infty$-norm centered in $x$ with radius $\frac{h}{4\sqrt{d}}$.

We will break up the path $\gamma_2^0\in w_0\xrightarrow{2'}y_0$ in pieces that are easier to work with. Let~$w_4\in \partial \tilde B_{w_0}\cap A_2^C$, $y_4\in \partial \tilde B_{y_0}\cap A_2^C$. We define the collection of finite paths:
\begin{itemize}
\item$w_0\xrightarrow{5}w_4$: The collection of all finite nearest-neighbor paths starting at $w_0$ whose only intersection with $\partial \tilde B_{w_0}\cup\partial A_2$ is at its ending point $w_4\in\partial \tilde B_{w_0}\cap A_2^C$. It is straightforward to see this collection as a simple random walk event.
\\
\item$w_4\xrightarrow{6}y_4$: The collection of finite nearest-neighbor paths starting at $w_4$ and ending at~$y_4$, without intersecting $\partial A_2$.
\\
\item$y_4\xrightarrow{7}y_0$: The collection of all finite nearest-neighbor paths that start at $y_4$, never return to $\partial \tilde B_{y_0}\cap A_2^C$, and end at $y_0$ without ever reaching $\partial A_2$. It is simple to see this collection as a simple random walk event.
\end{itemize}
\begin{figure}[ht]
\centering
\includegraphics[scale = 1]{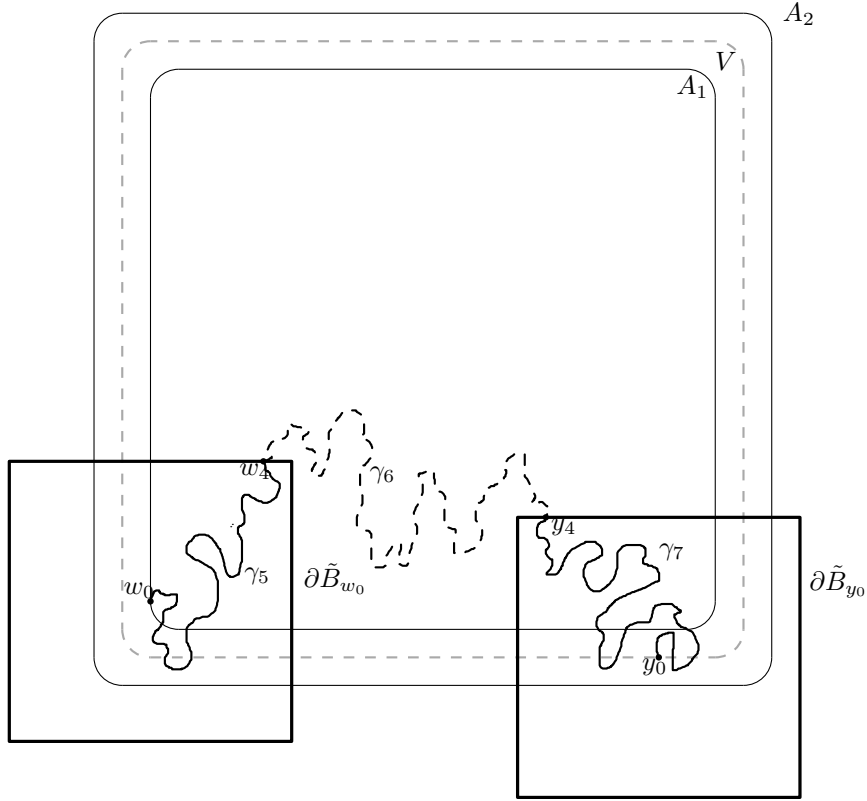}
\caption{Definition of the paths $\gamma_5$, $\gamma_6$ and $\gamma_{7}$.}
\label{w02y0fig4}
\end{figure}
As before, we denote by $w_0\xrightarrow{5}w_4\xrightarrow{6}y_4\xrightarrow{7}y_0$ the concatenation of these three collections.

Analogously to what we noted at the start of this section, we observe that $\gamma_2^0 \in w_0\xrightarrow{2'}y_0$ if and only if there exists $w_4\in \partial \tilde B_{w_0}\cap A_2^C$ and 
$y_4\in \partial \tilde B_{y_0}\cap A_2^C$ such that $\gamma_2^0$ is the concatenation of three paths: $\gamma_5 \in w_0\xrightarrow{5}w_4$, $\gamma_6\in w_4\xrightarrow{6}y_4$, and $\gamma_7\in y_4\xrightarrow{7}y_0$.

We also define
\begin{itemize}
\item$w_4\xrightarrow{6'}y_4$ The collection of finite simple random walk trajectories starting at $w_4$ and ending at its first visit to $y_4$ without intersecting $\partial A_2$. 
This collection can also be seen as the event where the simple random walk started at $w_4$ visits $y_4$ before it hits~$\partial A_2$.
\end{itemize}
Using the same trick we used to obtain the bound~($\ref{pathprobd}$), we can find a constant $c>1$ such that
\begin{equation}
\label{pathprobq}
\IP_{w_4}\big{[} w_4\xrightarrow{6'}y_4 \big{]}\leq \sum_{ \gamma_6 \in w_4\xrightarrow{6}y_4}\frac{1}{2^{|\gamma_6|}} \leq c
\IP_{w_4}\big{[} w_4\xrightarrow{6'}y_4 \big{]}.
\end{equation}
We then have 
\begin{align}
\IP_{w_0}\big{[} w_0\xrightarrow{2'}y_0 \big{]}
&=\sum_{\substack{w_4\in \partial \tilde B_{w_0}\cap A_2^C \\ \gamma_5 \in w_0\xrightarrow{5}w_4}}\frac{1}{2^{|\gamma_5|}}
\sum_{\substack{y_4\in \partial \tilde B_{y_0}\cap A_2^C \\ \gamma_6 \in w_4\xrightarrow{6}y_4}}\frac{1}{2^{|\gamma_6|}}\sum_{\gamma_7 \in y_4\xrightarrow{6}y_0}\frac{1}{2^{|\gamma_7|}}\nonumber  \\ 
&\leq c  \sum_{w_4}\IP_{w_0}\big{[} w_0\xrightarrow{5}w_4 \big{]}\sum_{y_4}\IP_{w_4}\big{[} w_4\xrightarrow{6'}y_4 \big{]}\IP_{y_4}\big{[} y_4\xrightarrow{7}y_0 \big{]}.
\end{align}
We then use the Green's function estimate \eqref{greenestimate} to bound $\IP_{w_4}\big{[} w_4\xrightarrow{6'}y_4 \big{]}$ by $ch^{d-2}$ (note that $\dist(w_4,y_4)=O(h)$). Using this bound on the above equality, we obtain
\begin{equation}
\IP_{w_0}\big{[} w_0\xrightarrow{2'}y_0 \big{]} \leq 
ch^{d-2}\sum_{w_4}\IP_{w_0}\big{[} w_0\xrightarrow{5}w_4 \big{]}\sum_{y_4}\IP_{y_4}\big{[} y_4\xrightarrow{7}y_0 \big{]}.
\end{equation}

We define the events
\begin{itemize}
\item$w_0\xrightarrow{5}\partial\tilde B_{w_0}$: The event where the simple random walk started at $w_0$ reaches $\partial\tilde B_{w_0}$ before reaching  $\partial A_2$.
\\
\item$y_0\xrightarrow{5}\partial\tilde B_{y_0}$: The event where the simple random walk started at $y_0$ reaches $\partial\tilde B_{y_0}$ before reaching $\partial A_2$.
\end{itemize}
Note that 
\begin{equation}
\sum_{w_4}\IP_{w_0}\big{[} w_0\xrightarrow{5}w_4 \big{]}=\IP_{w_0}\big{[} w_0\xrightarrow{5}\partial\tilde B_{w_0} \big{]},
\end{equation}
and using the simple random walk's reversibility, we also have
\begin{equation}
\sum_{y_4}\IP_{y_4}\big{[} y_4\xrightarrow{5}y_0 \big{]}=\IP_{y_0}\big{[} y_0\xrightarrow{5}\partial\tilde B_{y_0} \big{]},
\end{equation}
so that we obtain the following bound
\begin{equation}
\label{5arrow3}
\IP_{w_0}\big{[} w_0\xrightarrow{2'}y_0 \big{]}\leq 
\frac{c}{h^{d-2}}\IP_{w_0}\big{[} w_0\xrightarrow{5}\partial\tilde B_{w_0} \big{]}\IP_{y_0}\big{[} y_0\xrightarrow{5}\partial\tilde B_{y_0} \big{]}.
\end{equation}

We still have to obtain a bound for these last two probabilities. 
Since they are similarly defined, the bound for both of them follows from the same arguments, and thus we will only provide a bound for $\IP_{w_0}\big{[} w_0\xrightarrow{5}\partial\tilde B_{w_0} \big{]}$.

We will do so by looking at the projections of the random walk trajectory in each of the $d$ orthogonal axes. 
Since we will need to look at these projections independently, we will change our object of study from the simple random walk on $\Z^d$ to the continuous time simple random walk on $\Z^d$ with waiting times between steps distributed as $\Exp(1)$ random variables. 
Since we will be studying properties of the random walk's trajectories, this change of framework will in no way impact the probabilities of interest. 
We will denote by $\IP_x^c$, with $x\in\Z^d$, the probability measure associated with such continuous time random walk starting at $x$.

We recall the definition of $\mathfrak{H}_{r+2s}$, the unsmoothed version of $A_2^C$. Here we will assume~$\mathfrak{H}_{r+2s}$ takes the form
\begin{equation*}
\mathfrak{H}_{r+2s}:=\big{\{} (x_1,\dots,x_d)\in\Z^d: 0\leq x_i\leq r+2s,\text{ for all } i=1,\dots,d \big{\}}
\end{equation*}

Without loss of generality we assume that $0\in\Z^d$ is
the point belonging to $\{0,r+2s\}^{d}$ which is closest to $w_0$. We denote $w_0\equiv(w_0^1,\dots,w_0^d)$ and for each $j\in\{1,\dots,d\}$ we let~$(X_t^j,t\geq 0)$ be the projection on the $j$-th axis of the 
continuous time random walk started at $w_0$. This projection is itself a continuous time random walk started at $w_0^j$ with waiting time between jumps given by a $\Exp(d)$ random variable, and, as we already noted, these random walks are independent from each other. 
We will  define $\IP^j_{x}$ to be the probability measure associated with this projected random walk when it starts at $x\in\Z$.

We define, for $j\in\{1,\dots,d\}$ and $A\subset\Z$ , the hitting times
\begin{align}
\tau^j (A)&:=\inf_{t\geq 0}\{X^j_t\in A\},
\\
\tau^j&:=\tau^j (\{\max\{0,w_0^j-h\},w_0^j+h\}),
\end{align}
and we let $J_t^j$ denote the number of jumps the continuous time walk projected on the $j$-th direction makes before time $t$.

Since $J_t^j$ has Poisson distribution with parameter $td^{-1}$, we have (using a convenient large deviation estimate):
\begin{equation}
\label{poissondev}
\IP_{\! w_0^j}^j\Bigg{[}  \frac{(1-\delta)t}{d} \leq J^j_t  \leq  \frac{(1+\delta)t}{d}    \Bigg{]}\geq 1-e^{c(\delta )t}.
\end{equation}

Given $w_0$, we divide the $d$ directions of $\Z^d$ in two kinds. The first kind will be such that $\max\{0,w_0^j-h\}=0$, the second will be such that $\max\{0,w_0^j-h\}=w_0^j-h$. We assume without loss of generality the first $d_0$ directions to be of the first kind and the remaining directions to be of the second kind.

Given $t\in\R_+$, we will need to bound the probability $\IP_{\!  w_0^j}^j\big{[} \tau^j>t    \big{]}$. We first assume $j\leq d_0$. We denote by $(S^x_k,k\in\Z_+)$ the unidimensional discrete time simple random walk starting at $x\in\Z$, and by $\IP_x^\Z$ its associated measure. We have
\begin{equation}
\IP_{\!  w_0^j}^j\big{[} \tau^j>t    \big{]}\leq\IP_{\!  w_0^j}^j\big{[} \tau^j_{\{0\}}>t    \big{]}\leq\sum_{t_0} \IP_{\!  w_0^j}^\Z \big{[} \min_{0\leq k \leq t_0} S_k^{w_0^j}>0    \big{]}\IP_{\!  w_0^j}^c\big{[} J_t^j=t_0   \big{]}.
\end{equation}
Using ($\ref{poissondev}$), the reflection principle for the unidimensional simple random walk, and the central limit theorem, we can bound the above expression by
\begin{align}
\label{tau1}
\lefteqn{\sum_{\substack{t_0\in\Z_+,\\t_0\in\big(\frac{(1-\delta)t}{d},\frac{(1+\delta)t}{d}\big)}}
\IP_{\!  w_0^j}^c\big{[} J_t^j=t_0   \big{]}
\Big(1-2 \IP_{\!  w_0^j}^\Z \big{[}  S_{t_0}^{w_0^j}<0    \big{]}  \Big) +e^{-c(\delta)t}
 }  \phantom{***}\\ \nonumber
& = \sum_{\substack{t_0\in\big(\frac{(1-\delta)t}{d},\frac{(1+\delta)t}{d}\big)}}
\IP_{\!  w_0^j}^c\big{[} J_t^j=t_0   \big{]}
\Big(1-2 \IP_{0}^\Z \big{[}  S_{t_0}^{0}>w_0^j   \big{]}  \Big) +e^{-c(\delta)t}\\ \nonumber
& \leq  c\sum_{\substack{t_0\in\Z_+,\\t_0\in\big(\frac{(1-\delta)t}{d},\frac{(1+\delta)t}{d}\big)}}
\IP_{\!  w_0^j}^c\big{[} J_t^j=t_0   \big{]}\Big(1-\frac{2}{\sqrt{2\pi}}\int_0^\infty e^{-\frac{v^2}{2}}\d v \\
&\quad\quad\quad +\quad\frac{2}{\sqrt{2\pi}}\int_0^{\frac{w_0^j}{\sqrt{t_0}}} e^{-\frac{v^2}{2}}\d v+O(\sqrt{t_0})^{-1}\Big)+e^{-c(\delta)t}\\
& \leq \nonumber c\sum_{\substack{t_0\in\Z_+,\\t_0\in\big(\frac{(1-\delta)t}{d},\frac{(1+\delta)t}{d}\big)}}
\IP_{\!  w_0^j}^c\big{[} J_t^j=t_0   \big{]}\frac{w_0^j}{\sqrt{t_0}}\\
& \leq c\frac{w_0^j}{\sqrt{t}}. \nonumber
\end{align}
In a analogous way, we show for $j>d_0$
\begin{equation}
\label{tau2}
\IP_{\!  w_0^j}^j\big{[} \tau^j>t    \big{]}\leq c\frac{h}{\sqrt{t}}.
\end{equation}

We now bound the probability that the walk exits the sphere $ \partial \tilde B_{w_0} $ through the first direction, without ever hitting $\partial A_2$.
\begin{align}
\label{condcontwalk}
\lefteqn{\IP_{w_0}^c\big{[} X_{\tau^1}^1=w_0^1+h,\tau^j>\tau^1\text{ for all } j\neq 1    \big{]}}\phantom{*******}\\ 
\nonumber & = \IP_{\!  w_0^1}^1\big{[} X_{\tau^1}^1=w_0^1+h\big{]}\int\prod_{j\neq 1}\IP_{\!  w_0^j}^j\big{[} \tau^j>t    \big{]}\IP_{w_0}^c\big{[}\tau^1=t+\d t\mid X_{\tau^1}^1=w_0^1+h\big{]},
\end{align}
where $\IP_{w_0}^c\big{[}\tau^1=t+\d t\mid X_{\tau^1}^1=w_0^1+h\big{]}$ is the distribution of $\tau^1$ conditioned on the event $\{X_{\tau^1}^1=w_0^1+h\}$. Then, using $(\ref{tau1})$, $(\ref{tau2})$ and the gambler's ruin estimate (see Section~$5.1$ of \cite{LawlerLimic}), we are able to bound the above expression by
\begin{equation}
c\frac{w_0^1}{h}\int\prod_{1<j\leq d_0 } \frac{w_0^j}{\sqrt{t}} \prod_{d_0<j\leq d}\frac{h}{\sqrt{t}}\IP_{w_0}^c\big{[}\tau^1=t+\d t\mid X_{\tau^1}^1=w_0^1+h\big{]}.
\end{equation}

We define the continuous time simple random walk
\begin{equation}
X_t^{1,\frac{1}{2}}:=X^1_{t+\tau^1 (\{0,\lceil 2^{-1}(w_0^1+h)\rceil\})},
\end{equation}
so that, on $\{X_{\tau^1}^1=w_0^1+h\}$, $X_t^{1,\frac{1}{2}}$ is distributed  as a continuous time one-dimensional walk starting at a halfway point between $0$ and $w_0^1+h$. We also define the hitting time
\begin{equation}
\tau^{1,\frac{1}{2}}:=\inf\{t\geq 0;X_t^{1,\frac{1}{2}}\in\{0,w_0^1+h\}\}.
\end{equation}

On the event $\{X_{\tau^1}^1=w_0^1+h\}$, $\tau^1$ is distributed as $\tau^1(\{0,\lceil 2^{-1}(w_0^1+h)\rceil\})+\tau^{1,\frac{1}{2}}$, so that $\{\tau^1<t\}$ implies $\{\tau^{1,\frac{1}{2}}<t\}$.

We then have, for $\alpha<1$
\begin{equation}
\IP_{w_0}^c\Big{[}\tau^1<\alpha h^2 d\mid X_{\tau^1}^1=w_0^1+h\Big{]}\leq
\IP_{w_0}^c\Big{[}\tau^{1,\frac{1}{2}}<\alpha h^2 d\mid X_{\tau^1}^1=w_0^1+h\Big{]}.
\end{equation}
Since $X_t^{1,\frac{1}{2}}$ starts at a halfway point between $0$ and $w_0^1+h$, we have
\begin{equation}
\IP_{w_0}^c\Big{[}\tau^{1,\frac{1}{2}}<\alpha h^2 d\mid X_{\tau^1}^1=w_0^1+h\Big{]}\leq
c\IP_{2^{-1}(w_0^1+h)}^1\Big{[}\tau^{1,\frac{1}{2}}<\alpha h^2 d\Big{]}.
\end{equation}
Using ($\ref{poissondev}$) together with a large deviation estimate (see Lemma~$1.5.1$ of \cite{LawlerI}), we obtain
\begin{equation}
\label{halfwaytrick}
\IP_{w_0}^c\big{[}\tau^1<\alpha h^2 d\mid X_{\tau^1}^1=w_0^1+h\big{]}\leq
e^{c\alpha^{-1}}.
\end{equation}
We define
\begin{equation}
\psi_{w_0}(t):=c\frac{w_0^1}{h}\prod_{1<j\leq d_0 } \frac{w_0^j}{\sqrt{t}} \prod_{d_0<j\leq d}\frac{h}{\sqrt{t}}.
\end{equation}
Then
\begin{align*}
\lefteqn{\IP_{w_0}^c\big{[} X_{\tau^1}^1=w_0^1+h,\tau^j>\tau^1\text{ for all } j\neq 1    \big{]}}\phantom{*****}\\
& \leq  c\frac{w_0^1}{h}\int\prod_{1<j\leq d_0 } \frac{w_0^j}{\sqrt{t}} \prod_{d_0<j\leq d}\frac{h}{\sqrt{t}}\IP_{w_0}^c\big{[}\tau^1=t+\d t\mid X_{\tau^1}^1=w_0^1+h\big{]} \\
& = \sum_{k\geq 1}c\frac{w_0^1}{h}\int_{t=h^2 d (k+1)^{-1}}^{t=h^2 d k^{-1}}\prod_{1<j\leq d_0 } \frac{w_0^j}{\sqrt{t}} \prod_{d_0<j\leq d}\frac{h}{\sqrt{t}}\IP_{w_0}^c\big{[}\tau^1=t+\d t\mid X_{\tau^1}^1=w_0^1+h\big{]} \\
&\quad + c\frac{w_0^1}{h}\int_{t\geq h^2 d }\prod_{1<j\leq d_0 } \frac{w_0^j}{\sqrt{t}} \prod_{d_0<j\leq d}\frac{h}{\sqrt{t}}\IP_{w_0}^c\big{[}\tau^1=t+\d t\mid X_{\tau^1}^1=w_0^1+h\big{]}  \\
& \leq \psi_{w_0}(h^2 d)+\sum_{k\geq 1}\psi_{w_0}(h^2 dk^{-1})e^{-ck}.
\end{align*}

Since $\psi_{w_0}(h^2 dk^{-1})$ grows polynomially in~$k$ as $k\rightarrow\infty$, we have
\begin{equation}
\IP_{w_0}^c\big{[} X_{\tau^1}^1=w_0^1+h,\tau^j>\tau^1\text{ for all } j\neq 1    \big{]}\leq c \psi_{w_0}(h^2 d)\leq c'\prod_{1\leq j\leq d_0 }\frac{w_0^j}{h}.
\end{equation}

The proof is analogous for every $j=1,\dots,d$. When $j>d_0$ the calculations are in fact easier because, since $\max\{0,w_0^j-h\}=w_0^j-h$, there is no preferential direction in which the random walk $(X^j_t,t\geq 0)$ has to exit the ball $\partial \tilde B_{w_0}\cap A_2^C$, so that the required conditioning in $(\ref{condcontwalk})$ is simpler. We then have, for $j$ such that $d_0<j\leq d$,
\begin{equation}
\IP_{w_0}^c\big{[}\tau^n>\tau^j\text{ for all } n\neq j    \big{]}\leq c\prod_{1\leq j\leq d_0 }\frac{w_0^j}{h},
\end{equation}
so that
\begin{align}
\label{5arrow1}
\IP\big{[}  w_0\xrightarrow{5}\partial\tilde B_{w_0}  \big{]} 
&\leq \sum_{1\leq k \leq d_0}\IP_{w_0}^c\big{[} X_{\tau^k}^k=w_0^k+h,\tau^n>\tau^k\text{ for all } n\neq j    \big{]} \\
\nonumber &\quad+  \sum_{d_0<k\leq d}\IP_{w_0}^c\big{[} \tau^n>\tau^k\text{ for all } n\neq j    \big{]} \\
\nonumber &\leq c\prod_{1\leq i \leq d_0}\frac{w_0^i}{h}.
\end{align}

We will change the notation so that we are able to express the inequality above in a way that does not uses the fact that $\{0\}^d$ is the vertex of $\{0,r+2s\}^d$ which is closest to~$w_0$. Let $\mathfrak{H}^{d-1}_i$; $i=1,\dots,2d$; denote the $(d-1)$-dimensional~hyperfaces of~$\mathfrak{H}_{r+2s}$, and let $l^{w_0}_i:=\min \{\dist(w_0,\mathfrak{H}^{d-1}_i),h\}$, and $l^{y_0}_i:=\min \{\dist(y_0,\mathfrak{H}^{d-1}_i),h\}$. Then, ($\ref{5arrow1}$) implies
\begin{equation*}
\IP\big{[}  w_0\xrightarrow{5}\partial\tilde B_{w_0}  \big{]}\leq
c\frac{l^{w_0}_1\dots l^{w_0}_{2d}}{h^{2d}},
\end{equation*}
and using the same arguments used above, we can see that
\begin{equation*}
\IP\big{[}  y_0\xrightarrow{5}\partial\tilde B_{y_0}  \big{]}\leq
c\frac{l^{y_0}_1\dots l^{y_0}_{2d}}{h^{2d}}.
\end{equation*}
Together with ($\ref{5arrow3}$), this shows
\begin{equation}
\label{2final}
\IP_{w_0}\big{[} w_0\xrightarrow{2'}y_0 \big{]}\leq 
ch^{-(d-2)}\frac{l^{w_0}_1\dots l^{w_0}_{2d}}{h^{2d}}
\frac{l^{y_0}_1\dots l^{y_0}_{2d}}{h^{2d}}.
\end{equation}

We will also need a matching lower bound. We will continue to use the same notations and conventions. Again we assume $h>100s$, since otherwise the lower bound follows immediately from using a Green's function estimate. We define
\begin{equation}
w_5:=\Big(w_0^1+\frac{h}{4\sqrt{d}},\dots,w_0^{d_0}+\frac{h}{4\sqrt{d}},w_0^{d_0+1},\dots,w_0^d\Big),
\end{equation}
We analogously define $y_5$: Let $e_{i_{d_1}},\dots,e_{i_{d_k}}$ be the vectors in the orthonormal basis of $\R^d$ corresponding to the directions in which the ball 
$B_{\infty}\big(y_0,\frac{h}{4\sqrt{d}}\big)$ 
passes the limits of the hypercube
 $\mathfrak{H}_{r+2s}$. $y_5$ is defined to be
 the point in $A_2^C$ such that 
$$l=d_1,\dots,d_k\implies |\langle y_5-y_0,e_{i_{l}}\rangle | = \frac{h}{4\sqrt{d}},$$
$$n\neq d_1,\dots,d_k\implies|\langle y_5-y_0,e_{i_n}\rangle | = 0$$
$$\text{and}$$ 
$$B_{\infty}\Big(y_5,\frac{h}{4\sqrt{d}}\Big)\subseteq \mathfrak{H}_{r+2s}.$$
Our plan is to describe an event contained in $w_0\xrightarrow{2'}y_0$ with probability matching that of the right side of $(\ref{2final})$. We let
\begin{equation*}
B_{w_5} := B_{\infty}\Big(w_5,\frac{h}{16\sqrt{d}}\Big),
\end{equation*}
and
\begin{equation*}
B_{y_5} := B_{\infty}\Big(y_5,\frac{h}{16\sqrt{d}}\Big),
\end{equation*}
For $w_6\in \partial B_{w_5}$ and $y_6\in \partial B_{y_5}$, 
we define the events
\begin{itemize}
\item$w_0\xrightarrow{8}w_6$: The event where the random 
walk started at $w_0$ hits $\partial B_{w_5}$ before hitting
 $\partial A_2$ and its entrance point in $\partial B_{w_5}$ is $w_6$.
\\
\item$w_6\xrightarrow{9}y_6$: The event where the random walk started at $w_6$ visits $y_6\in\partial B_{y_5}$ before reaching $\partial A_2$.
\\
\item$y_6\xrightarrow{10}y_0$: The event where the simple random walk started at $y_6$ hits $y_0$ before returning to $\partial B_{y_5}$.
\end{itemize}
\begin{figure}[ht]
\centering
\includegraphics[scale = 1]{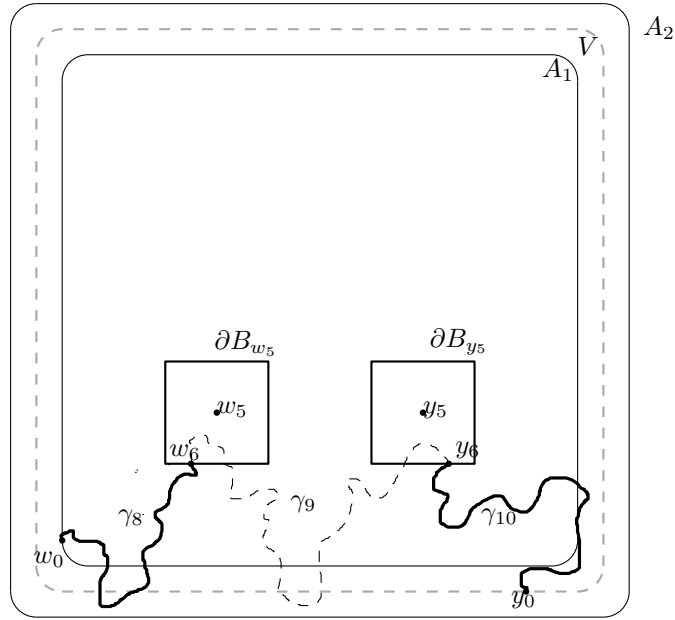}
\caption{Definition of the paths $\gamma_8$, $\gamma_9$ and $\gamma_{10}$.}
\label{w02y0fig5}
\end{figure}

And we denote by $w_0\xrightarrow{8}w_6\xrightarrow{9}y_6\xrightarrow{10} y_0$ the ``concatenation'' of these three events, that is, 
the path $\gamma$ belongs to the event $w_0\xrightarrow{8}w_6\xrightarrow{9}y_6\xrightarrow{10} y_0$ 
if and only if $\gamma$ is the concatenation of three paths: $\gamma_{8}\in w_0\xrightarrow{8}w_6$, $\gamma_{9}\in w_6\xrightarrow{9}y_6$ and $\gamma_{10}\in y_6\xrightarrow{10}y_0$. It is then clear that
\begin{equation*}
\bigcup_{w_6}\bigcup_{y_6}w_0\xrightarrow{8}w_6\xrightarrow{9}y_6\xrightarrow{10} y_0\subset w_0\xrightarrow{2'}y_0,
\end{equation*}
so that; summing over 
$\gamma_{8}\in w_0\xrightarrow{8}w_6$, $\gamma_{9}\in w_6\xrightarrow{9}y_6$ and $\gamma_{10}\in y_6\xrightarrow{10}y_0$;
 we have
\begin{equation}
\begin{array}{e}
\IP_{w_0}\big{[} w_0\xrightarrow{2'}y_0 \big{]}
& \geq &
\sum_{w_6}\sum_{\gamma_8}\frac{1}{2^{|\gamma_8|}}
\sum_{y_6}\sum_{\gamma_9}\frac{1}{2^{|\gamma_9|}}\sum_{\gamma_{10}}\frac{1}{2^{|\gamma_{10}|}} \\[.5 cm] 
& =  & \sum_{w_6}\IP_{w_0}\big{[} w_0\xrightarrow{8}w_6 \big{]}\sum_{y_6}\IP_{w_6}\big{[} w_6\xrightarrow{9}y_6 \big{]}\IP_{y_6}\big{[} y_6\xrightarrow{10}y_0 \big{]}\\[.5 cm] 
& \geq & \frac{c}{h^{d-2}}\sum_{w_6}\IP_{w_0}\big{[} w_0\xrightarrow{8}w_6 \big{]}\sum_{y_6}\IP_{y_6}\big{[} y_6\xrightarrow{10}y_0 \big{]},
\end{array}
\end{equation}
where we bounded  $\IP_{w_6}\big{[} w_6\xrightarrow{9}y_6 \big{]}$ from below by $ch^{d-2}$ using the Green's function estimate \eqref{greenestimate} and the fact that the distance of 
both $w_6$ and $y_6$ from $\partial A_2$ has order $h$.

We define the events
\begin{itemize}
\item$w_0 \xrightarrow{8}\partial B_{w_5}$: 
The event where the simple random walk started at $w_0$ reaches $\partial B_{w_5}$ before reaching $\partial A_2$.
\\
\item$y_0 \xrightarrow{8}\partial B_{y_5}$: The event where the simple random walk started at $y_0$ reaches $\partial B_{y_5}$ before reaching $\partial A_2$.
\end{itemize}
Due to the simple random walk's reversibility, we get that 
\begin{equation}
\label{2boxes}
\IP_{w_0}\big{[} w_0\xrightarrow{2'}y_0 \big{]} \geq \frac{c}{h^{d-2}}
\IP_{w_0}\big{[} w_0\xrightarrow{8}\partial B_{w_5} \big{]}
\IP_{y_0}\big{[} y_0\xrightarrow{10}\partial B_{y_5} \big{]}.
\end{equation}
We will prove a bound for $\IP_{w_0}\big{[} w_0\xrightarrow{8}\partial B_{w_5} \big{]}$, since the bound for $\IP_{y_0}\big{[} y_0\xrightarrow{10}\partial B_{y_5} \big{]}$ follows from analogous arguments. We will use the same continuous time
 random walk projections to study this probability. 
The notation used will be the same as the one used in the proof of the upper bound. For $1\leq j \leq d_0$, we define
\begin{equation*}
\tau^j := \inf \Bigg{\{} t\geq 0,X^j_t\in\Big\{0,w_0^j+\frac{h}{4\sqrt{d}} \Big\}  \Bigg{\}},
\end{equation*}
and on the event $\big\{X^j_{\tau^j}=w_0^j+\frac{h}{4\sqrt{d}}\big\}$, we define $\tau^j_\infty$ as
\begin{equation*}
\tau^j_\infty := \inf \Bigg{\{} t\geq 0,X^j_{t+\tau^j}\in\Big\{w_0^j+\frac{3h}{16\sqrt{d}},w_0^j+\frac{5h}{16\sqrt{d}} \Big\}  \Bigg{\}},
\end{equation*}
that is, the first time after $\tau^j$ when the projection of the continuous time simple random walk on the $j$-th direction hits
 the projected boundary of the ball $B_{w_5}$. For $d_0<m\leq d$, we also define
\begin{equation*}
\tau^m_\infty := \inf \Bigg{\{} t\geq 0,X^m_{t}\in\Big\{w_0^m-\frac{h}{16\sqrt{d}},w_0^m+\frac{h}{16\sqrt{d}} \Big\}  \Bigg{\}},
\end{equation*}
the first time the walk projected in the $m$-th direction hits the projected boundary of the ball $B_{w_5}$. Finally, we define
\begin{equation*}
T := \max_{1\leq i \leq d_0} \tau^i.
\end{equation*}
We then have, for any $l>0$,
\begin{align*}
\lefteqn{\IP_{w_0}\Big{[} w_0\xrightarrow{8}\partial B_{w_5}\Big{]} }
 \phantom{****}\\ 
& \geq  \IP_{w_0}^c\left[\begin{array}{c}  X^j_{\tau^j}=w_0^j+\frac{h}{4\sqrt{d}},\tau^j_\infty>T \text{ for all } j=1,\dots , d_0\phantom{*};\\ \tau^m_\infty>T\text{ for all } m=d_0+1,\dots , d  \end{array}\right] \\
& \geq \IP_{w_0}^c\Big{[} T < l              \Big{]}
\prod_{1\leq j \leq d_0}   \IP_{\!  w_0^j}^j\Big{[}  X^j_{\tau^j}=w_0^j+\frac{h}{4\sqrt{d}},\tau^j_\infty>l \Big{]} \prod_{d_0< m \leq d}\IP_{\!  w_0^m}^m\Big{[} \tau^m_\infty>l \Big{]}.
\end{align*}
Now, let $c_1,c_2>0$ be such that $c_1>c_2$. We have that
\begin{equation*}
\IP_{w_0}^c\big{[} T < c_1 h^2              \big{]} \geq c >0.
\end{equation*}
For each $j=1,\dots,d_0$, we have, by the strong Markov property,
\begin{align*}
\IP_{\!  w_0^j}^j\Big{[}  X^j_{\tau^j}=w_0^j+\frac{h}{4\sqrt{d}},\tau^j_\infty>c_1 h^2  \Big{]}
&\geq
\IP_{\!  w_0^j}^j\Big{[}  X^j_{\tau^j}=w_0^j+\frac{h}{4\sqrt{d}},\tau^j_\infty>c_1 h^2 , \tau^j>c_2 h^2 \Big{]} 
\\
&\geq
\IP_{\!  w_0^j}^j\Big{[}  X^j_{\tau^j}=w_0^j+\frac{h}{4\sqrt{d}}, \tau^j>c_2 h^2 \Big{]}
\\
&\quad\times
\IP_{\!  w_0^j+\frac{h}{4\sqrt{d}}}^j\Big{[} H_{\partial B_{w_5}} > (c_1-c_2) h^2\Big{]}
\\
&\geq
c\IP_{\!  w_0^j}^j\Big{[}  X^j_{\tau^j}=w_0^j+\frac{h}{4\sqrt{d}}, \tau^j>c_2 h^2 \Big{]}
\\
&\geq
c\IP_{\!  w_0^j}^j\Big{[}  X^j_{\tau^j}=w_0^j+\frac{h}{4\sqrt{d}}\Big{]} \IP_{\!  w_0^j}^j\Big{[}\tau^j>c_2 h^2\Big\vert X^j_{\tau^j}=w_0^j+\frac{h}{4\sqrt{d}} \Big{]}.
\end{align*}
Using ($\ref{halfwaytrick}$), we can see that
\begin{equation*}
\IP_{\!  w_0^j}^j\Big{[}\tau^j>c_2 h^2\Big\vert X^j_{\tau^j}=w_0^j+\frac{h}{4\sqrt{d}}\Big{]}>c>0,
\end{equation*}
so that
\begin{align*}
\IP_{\!  w_0^j}^j\Big{[}  X^j_{\tau^j}=w_0^j+\frac{h}{4\sqrt{d}},\tau^j_\infty>c_1 h^2  \Big{]}
&\geq
c\IP_{\!  w_0^j}^j\Big{[}  X^j_{\tau^j}=w_0^j+\frac{h}{4\sqrt{d}}\Big{]}
\\
&\geq c\frac{w_0^j}{h}. 
\end{align*}
For each $m=d_0+1,\dots,d$, it is elementary to see that
\begin{equation*}
\IP_{\!  w_0^m}^m\big{[} \tau^m_\infty>c_1 h^2 \big{]}\geq c >0.
\end{equation*}
Collecting the above equations, we obtain that
\begin{equation}
\label{3final}
\IP_{w_0}\big{[} w_0\xrightarrow{8}\partial B_{w_5} \big{]}\geq \prod_{1\leq i \leq d_0}\frac{w_0^i}{h}.
\end{equation}
Together with ($\ref{2boxes}$) and using the new notation, we have established the bounds:
\begin{equation}
\label{5final}
ch^{-(d-2)}\frac{l^{w_0}_1\dots l^{w_0}_{2d}}{h^{2d}}
\cdot\frac{l^{y_0}_1\dots l^{y_0}_{2d}}{h^{2d}}\leq\IP_{w_0}\big{[} w_0\xrightarrow{2'}y_0 \big{]}\leq 
c'h^{-(d-2)}\frac{l^{w_0}_1\dots l^{w_0}_{2d}}{h^{2d}}
\cdot\frac{l^{y_0}_1\dots l^{y_0}_{2d}}{h^{2d}}.
\end{equation}

$\boldsymbol{w\xrightarrow{4}y}$: Again we let $h_4:=\dist(w,y)$, and again we suppose $h_4>100s$, since a elementary application of the estimate for the Green's function proves the case when $h_4<100s$. 
We will start with the lower bound. Let $C_3$ be a discrete ball of radius $s$ contained in~$A_2^C$ such that $\partial A_2\cap C_3=\{y\}$. Let $C_4$ be a discrete ball of radius $\frac{s}{4}$ concentric with $C_3$. 
Then, the probability that the walk started at $y$ hits $C_4$ before returning to $\partial A_2$ is bigger than the probability that it hits $C_4$ before returning to $C_3$, and has order $s^{-1}$. 
Now, for every point $\tilde{y}\in\partial C_4$, we bound $\IP_{w}\big{[} w\xrightarrow{2'}\tilde{y} \big{]}$ from below in exactly the same way as we bounded~$\IP_{w_0}\big{[} w_0\xrightarrow{2'} y_0 \big{]}$. So that, using the walk's reversibility, the fact that $h_4> 100s$, and the same notation introduced above, we have
\begin{align*}
\IP_{w}\big{[} w\xrightarrow{4} y \big{]}
&\geq 
\sum_{\tilde{y}\in\partial C_4}\IP_{w}\big{[} w\xrightarrow{2'} \tilde{y} \big{]}\IP_{y}\big{[} H_{\partial C_4}<H_{\partial A_2},X_{H_{\partial C_4}}=\tilde{y} \big{]}\\
&\geq
cs^{-1}\inf_{\tilde{y}\in\partial C_4}\IP_{w}\big{[} w\xrightarrow{2'} \tilde{y} \big{]}\\
&\geq
cs^{-1}h_{4}^{-(d-2)}\inf_{\tilde{y}\in\partial C_4}
\frac{l^{w_0}_1\dots l^{w_0}_{2d}}{h_{4}^{2d}}
\cdot\frac{l^{\tilde{y}}_1\dots l^{\tilde{y}}_{2d}}{h_{4}^{2d}}.
\end{align*}

For the upper bound, let $C_3'$ be a discrete ball of radius $s$ contained in $A_2\cup\partial A_2$ such that $\partial A_2\cap C_3'=\{y\}$. Let $C_4'$ be a discrete ball of radius $2s$ concentric with $C_3'$. Then
\begin{align*}
\IP_{w}\big{[} w\xrightarrow{4} y \big{]}
&\leq 
\sum_{\hat{y}\in\partial C_4'}\IP_{w}\big{[} w\xrightarrow{2'} \hat{y} \big{]}\IP_{y}\big{[} H_{\partial C_4'}<H_{\partial A_2},X_{H_{\partial C_4'}}=\hat{y} \big{]}\\
&\leq
cs^{-1}\inf_{\hat{y}\in\partial C_4'}\IP_{w}\big{[} w\xrightarrow{2'} \hat{y} \big{]}\\
&\leq
cs^{-1}h_{4}^{-(d-2)}\sup_{\hat{y}\in\partial C_4'}
\frac{l^{w_0}_1\dots l^{w_0}_{2d}}{h_{4}^{2d}}
\frac{l^{\hat{y}}_1\dots l^{\hat{y}}_{2d}}{h_{4}^{2d}}.
\end{align*}

Using ($\ref{pathprobfsquare}$) and ($\ref{pathprobhsquare}$) we see that the supremum in ($\ref{supprob}$) is reached when~$h_1$ and~$h_3$ are of order $s$. This way,~$h$ should have the same order as~$h_4$. We have proved the following proposition:

\begin{proposition}
\label{p_boundprosquare}
Regarding the sets $A_1^{\tiny\Square},V^{\tiny\Square}$ and $A_2^{\tiny\Square}$, we have that, using the notation defined above, for some constants $c_1$, $c_2$, $c_3$, $c_4$, $c_5$, $c_6$, $c_7$, $c_8$, $c_9> 0$, the following bounds are valid:
\begin{equation*}\IP_w\big{[} w\xrightarrow{1}w_0 \big{]}\leq c_1 \exp\Big(\frac{-c_2 h_1}{s}\Big)s^{-(d-1)},\end{equation*}
\begin{equation*}\IP_w\big{[} y\xrightarrow{3}y_0 \big{]}\leq c_3 \exp\Big(\frac{-c_4 h_3}{s}\Big)s^{-(d-1)}s^{-1},\end{equation*}
\begin{equation*} c_5 h^{-(d-2)}\frac{l^{w_0}_1\dots l^{w_0}_{2d}}{h^{2d}}
\frac{l^{y_0}_1\dots l^{y_0}_{2d}}{h^{2d}}\leq\IP_{w_0}\big{[} w_0\xrightarrow{2'}y_0 \big{]}\leq 
c_6 h^{-(d-2)}\frac{l^{w_0}_1\dots l^{w_0}_{2d}}{h^{2d}}
\frac{l^{y_0}_1\dots l^{y_0}_{2d}}{h^{2d}},\end{equation*}
\begin{equation*}  c_7 s^{-1}h_{4}^{-(d-2)}\inf_{\tilde{y}\in\partial C_4}
\frac{l^{w_0}_1\dots l^{w_0}_{2d}}{h_{4}^{2d}}
\frac{l^{\tilde{y}}_1\dots l^{\tilde{y}}_{2d}}{h_{4}^{2d}}
\leq\IP_{w_0}\big{[} w\xrightarrow{4}y \big{]} \leq 
c_8s^{-1}h_{4}^{-(d-2)}\sup_{\hat{y}\in\partial C_4'}
\frac{l^{w_0}_1\dots l^{w_0}_{2d}}{h_{4}^{2d}}
\frac{l^{\hat{y}}_1\dots l^{\hat{y}}_{2d}}{h_{4}^{2d}}.
\end{equation*}
\begin{equation*}\sup_{\substack{w\in V \\ y\in \partial A_2}}\IP_w\big{[} w\xrightarrow{1}w_0\xrightarrow{2}y_0\xrightarrow{3}y \mid w\xrightarrow{4}y \big{]}\leq c_9 s ^{-2(d-1)} .\end{equation*}
\end{proposition}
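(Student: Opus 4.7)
The strategy is to prove the five bounds separately, adapting the techniques that worked for $A_1^{\tiny\Circle}$ to the smoothed hypercube geometry. The first two bounds are handled by the same shell-crossing argument used for balls. For the first bound on $\IP_w[w \xrightarrow{1} w_0]$, I would partition the annular region between $V^{\tiny\Square}$ and $\partial A_1^{\tiny\Square}$ into boxes of diameter of order $s$ adjacent to the flat faces; a path from $w$ to $w_0$ must cross of order $h_1/s$ such boxes in sequence, and Donsker's invariance principle shows that each crossing succeeds (reaches another such box at distance $\geq s$ before leaving the annulus) with probability at most some constant $c_2 < 1$. The strong Markov property then gives the exponential factor $\exp(-ch_1/s)$, and the $s^{-(d-1)}$ factor comes from comparing the hitting probability on $\partial A_1$ at the single point $w_0$ with the hitting probability on a small ball of radius $s$ tangent to $\partial A_1$ at $w_0$ (Proposition 6.5.4 of \cite{LawlerLimic}). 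The analogous bound for $y_0 \xrightarrow{3} y$ follows by reversibility together with an extra $s^{-1}$ factor from Lemma \ref{l_popovteixeira}, bounding the probability that the walk leaves a small discrete ball around $y$ inside $A_2^C$ before returning to $V^{\tiny\Square}$.

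The main obstacle is the bound on $\IP_{w_0}[w_0 \xrightarrow{2'} y_0]$, since this is where the product-of-distances $l_1^{w_0}\cdots l_{2d}^{w_0}$ factor appears. Assume $h := \dist(w_0,y_0) > 100 s$ (otherwise the Green's function estimate \eqref{greenestimate} suffices directly). The plan is to cut any such path at the exit of two $\ell_\infty$ balls $\tilde B_{w_0}$ and $\tilde B_{y_0}$ of radius $h/(4\sqrt d)$ around $w_0$ and $y_0$. The middle portion of the path, going between two points at distance $\sim h$ from $\partial A_2$ and $\sim h$ from each other, is handled by the Green's function estimate, contributing $h^{-(d-2)}$. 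What remains is to bound the probability $\IP_{w_0}[w_0 \xrightarrow{5} \partial \tilde B_{w_0}]$ that the walk exits $\tilde B_{w_0}$ before hitting $\partial A_2$. The crucial step is to pass to continuous time so that the $d$ coordinate projections become \emph{independent} one-dimensional random walks; the requirement of avoiding $\partial A_2$ then decouples into saying that the $d_0$ projections with small initial coordinate must exit their interval through the far endpoint (each contributing $w_0^j/h$ by gambler's ruin) while the remaining projections must not drift too far during the exit time. Integrating against the conditional distribution of the exit time $\tau^1$, which concentrates around $h^2 d$ and can be handled using the halfway-point trick in \eqref{halfwaytrick}, yields the desired product $\prod_j l_j^{w_0}/h$. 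For the matching lower bound, I would describe an explicit sub-event: exit $\tilde B_{w_0}$ through the prescribed corner of a smaller concentric box around an interior point $w_5$ shifted from $w_0$ toward the centre of $\mathfrak H_{r+2s}$, then cross to an analogous box near $y_5$ (using Green's function), and finally hit $y_0$ from close range.

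Finally, the bound on $\IP_{w_0}[w \xrightarrow{4} y]$ is obtained by inserting a discrete ball $C_4$ (respectively $C_4'$) of radius comparable to $s$ around $y$ and decomposing according to the first entrance into $\partial C_4$: the hitting probability from $y$ into $\partial C_4 \cap A_2^C$ before returning to $\partial A_2$ is of order $s^{-1}$ by Lemma \ref{l_popovteixeira}, and for any $\tilde y \in \partial C_4$ the probability of the walk from $w$ reaching $\tilde y$ before exiting $A_2^C$ is controlled by the just-proved bound on $\IP_w[w \xrightarrow{2'} \tilde y]$. For the final supremum bound on $\IP_w[w \xrightarrow{1} w_0 \xrightarrow{2} y_0 \xrightarrow{3} y \mid w \xrightarrow{4} y]$, I would multiply the four individual bounds together and observe that the exponentials $\exp(-ch_1/s)$ and $\exp(-ch_3/s)$ force the supremum to be attained when $h_1, h_3 \asymp s$, so that $h \asymp h_4$ and all the $l_j^{w_0}, l_j^{y_0}$ factors in $\IP_{w_0}[w_0 \xrightarrow{2'} y_0]$ are cancelled by the corresponding factors in the denominator $\IP_w[w \xrightarrow{4} y]$; what remains collapses to $c s^{-2(d-1)}$.
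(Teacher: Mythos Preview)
Your proposal is correct and follows essentially the same approach as the paper's proof: the shell-crossing argument with Donsker for $w\xrightarrow{1}w_0$ and $y_0\xrightarrow{3}y$, the decomposition at the exit of the $\ell_\infty$-balls $\tilde B_{w_0},\tilde B_{y_0}$ with the continuous-time coordinate-projection trick (gambler's ruin, reflection principle, and the halfway-point estimate \eqref{halfwaytrick}) for $w_0\xrightarrow{2'}y_0$, and the reduction of $w\xrightarrow{4}y$ to $w\xrightarrow{2'}\tilde y$ via the small ball $C_4$ around~$y$. The only cosmetic difference is that the paper partitions the region $A_2^C\setminus A_1$ (rather than the region between $V^{\tiny\Square}$ and $\partial A_1^{\tiny\Square}$) into the boxes $E^{(m_1,\dots,m_d)}_{j,i_1,\dots,i_{j-1},i_{j+1},\dots,i_d}$, but this does not affect the argument.
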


\subsection{Proof of Lemma ~\ref{l_expecslt}}\label{s_t2}

Let $z\in\Sigma$ be such that $ \Xi(z)=(w_0,y_0)$, and again let~$h$ stand for the Euclidean distance between $w_0$ and $y_0$. We let $\pi(w_0,y_0)$ be defined in the same way as in $(\ref{l_expecslt})$.
 Given a simple random walk trajectory $\varrho$ started in a set $B$ containing~$V$, we define $\mathcal{C}_{w_0,y_0}^{B}(\varrho)$ to be the function that counts how many times the random walk trajectory $\varrho$ makes an excursion on $A_2^C$ that enters $A_1$ at $w_0$, and $y_0$ 
is the last point such excursion visits on $V$ before reaching $\partial A_2$. We let $\mathcal{C}_{w_0,y_0}^{B}$ denote the random variable $\mathcal{C}_{w_0,y_0}^{B}(\bar{\varrho})$ when $\bar{\varrho}$'s first point is chosen according to $\bar{e}_B$. Proposition $\ref{t_expslt}$ then implies
\begin{equation*}
\pi(w_0,y_0)= \IE(\mathcal{C}_{w_0,y_0}^{V}).
\end{equation*}

Define $\tilde{V}:=\partial B(0,3(r+s))$, the discrete sphere of radius $3(r+s)$.
We define 
\begin{equation*}
\tilde{\pi}(w_0,y_0):=\IE(\mathcal{C}_{w_0,y_0}^{\tilde{V}}).
\end{equation*}
From the compatibility of the laws defined in \eqref{interlacementsdef}, one can see that (see also the proof of Lemma~$6.2$ of~\cite{PopovTeixeira}):
\begin{equation*}
u\capacity(\tilde{V})\IE(\mathcal{C}_{w_0,y_0}^{\tilde{V}})=u\capacity(V)\IE(\mathcal{C}_{w_0,y_0}^{V}).
\end{equation*}
Since $\capacity(\tilde{V})\asymp\capacity(V)$, if we successfully estimate $\tilde{\pi}(w_0,y_0)$ we will automatically be provided with an estimate for $\pi(w_0,y_0)$. We changed the problem from estimating
 $\pi(w_0,y_0)$ to estimating  $\tilde{\pi}(w_0,y_0)$ so that the distance between the simple random walk's starting point and $w_0$ does not affect our calculations.

First we note that $\mathcal{C}_{w_0,y_0}^{\tilde{V}}$ is dominated by a Geometric $(c_1)$ random variable, for some $0<c_1<1$. This follows from the fact that every time the simple random walk exits $A_2^C$, with probability uniformly greater than some constant $1-c_1>0$,
 the walk never returns to $w_0$. This way, it will be sufficient to estimate the probability $\GP [\mathcal{C}_{w_0,y_0}^{\tilde{V}}\geq 1]$ for our purposes.

So, for a walk started at $\tilde{V}$ to reach $w_0$, it first has to hit a discrete sphere $\partial C_1$ of radius~$\frac{s}{2}$ centered on $w_0$. The probability of such event is of order $\frac{s^{d-2}}{r^{d-2}}$, by Proposition~$6.4.2$ of \cite{LawlerLimic}. 

Let $C_2$ be a discrete ball of radius $s$ contained in $A_1$ such that $C_2\cap A_1 =\{w_0\}$. We also let $C_3$ be a discrete ball of radius $2s$ lying outside $A_1$ such that  $C_3\cap A_1 =\{w_0\}$. Using Proposition 6.5.4 of \cite{LawlerLimic} we have, for any $x'\in\partial C_1\cap A_!^C$ and some constant $c_2>0$:
\begin{equation*}
\IP_{x'}\big{[}  X_{H_{A_1}}=w_0     \big{]} \leq \IP_{x'}\big{[}  X_{H_{C_2}}=w_0     \big{]}\leq c_2 s^{-(d-1)}.
\end{equation*}
Then, recalling the notation $f_{A_1}(w_0,y_0):=\IP_{w_0}\big{[} w_0\xrightarrow{2'}y_0 \big{]}$ and the fact that $\capacity (V)\asymp r^{(d-2)}$, and using the strong Markov property, we get, for constants $c,c_1>0$:
\begin{equation}
\pi(w_0,y_0)\leq c \GP [\mathcal{C}_{w_0,y_0}^{\tilde{V}}\geq 1] \leq c_1\capacity(V)^{-1}s^{-1}f_{A_1}(w_0,y_0).
\end{equation}

For the lower bound, we let $C_4$ be a discrete ball of radius $\frac{s}{4}$ contained in $A_2^C\setminus B(0,r+s)$ such that for every $x\in C_4$, $\dist(x,w_0)\leq 2s$. Using the strong Markov property, we get
\begin{equation*}
\GP\big{[}  \mathcal{C}_{w_0,y_0}^{\tilde{V}} \geq 1    \big{]} 
\geq \inf_{x\in\tilde{V}}\IP_x\big{[} H_{C_4}<\infty   \big{]}    
\inf_{x''\in C_4}\IP_{x}\big{[}  X_{C_3}=w_0   \big{]} 
f_{A_1}(w_0,y_0),
\end{equation*}
so that, using Proposition 6.4.2 of \cite{LawlerLimic} we have, for some constant $c_3>0$,
\begin{equation*}
\pi(w_0,y_0)\geq c_3 \capacity(V)^{-1}s^{-1}f_{A_1}(w_0,y_0).
\end{equation*}
The part~(ii) then follows from (i) and Proposition $\ref{t_2ndmoment}$.

\subsection{A lower bound for $\alpha$ }\label{s_t3} 

Let $z\in\Sigma$ be such that $\Xi(z)=(w_0,y_0)$, let $c_4 >0$ be some positive real number. For 
\begin{equation*}
\Gamma_{w_0,y_0} :=\{(w_0 ',y_0 ')\in V\times \partial A_2 ;\space\space \max \{||w_0 ' - w_0||,||y_0 ' - y_0||\} \leq c_4 s \}
\end{equation*} 
and
\begin{equation*} 
\alpha  := \inf\Big\{\frac{g_{(w,y)}(z')}{g_{(w,y)}({\hat z})}; 
  (w,y) \in V\times\partial A_2, z' \in \Gamma_{w_0,y_0},\hat z \in \mathcal{K}\Big\}.
\end{equation*}
We need to find a constant lower bound for $\alpha$. Such lower bound will be provided if we bound the ratios:
\begin{equation} 
\label{balpha}
\inf_{||w_0 ' - w_0||\leq c_4 s} \frac{\IP_w\big{[} w\xrightarrow{1}w_0 ' \big{]}}{\IP_w\big{[} w\xrightarrow{1}w_0  \big{]}}  ,\space\space\space \inf_{||y_0 ' - y_0||\leq c_4 s} \frac{\IP_y\big{[} y\xrightarrow{3'}y_0 ' \big{]}}{\IP_y\big{[} y\xrightarrow{3'}y_0 \big{]}}
\end{equation}
as the other terms of the product
\begin{equation*}
\IP_w\big{[} w\xrightarrow{1}w_0 \big{]} \IP_{w_0}\big{[} w_0\xrightarrow{2'}y_0 \big{]} \IP_y\big{[} y\xrightarrow{3'}y_0 \big{]} \IP_{w}\big{[} w\xrightarrow{4}y \big{]}^{-1}=g_{(w,y)}(z)
\end{equation*}
already have matching lower and upper bounds. Since the ratios in $(\ref{balpha}) $ are very similarly defined, we will only give a lower bound to the first one. We define:
\begin{equation*}
 D = \Big\{x \in \Z^d \setminus A_1 : 
  \dist(x,A_1)\leq 
\frac{s}{8} \text{ and }
    \max\{\dist(x,w_0)\dist(x,w_0 ')\}\leq c_4 s\Big\},
\end{equation*}
and
\begin{equation*}
 \label{e:Ghat}
 \hat D = \{ x \in D: \text{ there exists }v \in \Z^d \setminus 
   (A_1 \cup D) \text{ such that }x\leftrightarrow v\}.
\end{equation*}
One can think of $\hat D$ as the part of the internal boundary of $D$ that is not adjacent to $A_1$.

Proposition $8.7$ of \cite{PopovTeixeira} then says
\begin{equation}
\label{e_popovteixeira2}
\inf_{\substack{x\in\hat D \\ \IP_x[X_{H_{A_1}}=w_0 ']>0}} \frac{\IP_x[X_{H_{A_1}}=w_0 ]}{\IP_x[X_{H_{A_1}}=w_0 ']}>c_4>0.
\end{equation}
Informally the above inequality says that if a random walk is sufficiently away from the points $w_0$ and $w_0 '$,
 but somewhat close to $\partial A_1$, then the probabilities that such walk hits either $w_0$ or $w_0 '$ are comparable.

Changing the notation, we have
\begin{equation}
 \frac{\IP_w\big{[} w\xrightarrow{1}w_0 \big{]}}{\IP_w\big{[} w\xrightarrow{1}w_0 ' \big{]}}=\frac{\IP_w[X_{H_{A_1\cup\partial A_2}}=w_0 ]}{\IP_w[X_{H_{A_1\cup\partial A_2}}=w_0 ']}.
\end{equation}
Using the strong Markov property we can rewrite the above ratio between probabilities as the ratio between the sums:
\begin{equation}
\frac{\IP_w[X_{H_{A_1\cup\partial A_2}}=w_0 ]}{\IP_w[X_{H_{A_1\cup\partial A_2}}=w_0 ']}=\frac{\sum_{x\in\hat D}\IP_w[X_{H_{\hat D\cup\partial A_2\cup A_1}}=x]\IP_x[X_{H_{A_1\cup\partial A_2}}=w_0]}{\sum_{x\in\hat D}\IP_w[X_{H_{\hat D\cup\partial A_2\cup A_1}}=x]\IP_x[X_{H_{A_1\cup\partial A_2}}=w_0 ']}.
\end{equation}
But at the same time
\begin{equation*}
\IP_x[X_{H_{A_1\cup\partial A_2}}= w_0 ] 
\geq \IP_x[X_{H_{A_1}}= w_0 ] - \IP_x[H_{\partial A_2}<H_{A_1} ]\sup_{x'\in \partial A_2}\IP_{x'}[X_{H_{A_1}}= w_0 ].
\end{equation*}
By the usual trick of considering the probabilities 
of hitting and escaping certain well placed discrete balls, we are able to see that both terms in the right side of the inequality have order $ \dist(x,\partial A_1) s^{-1}s^{-(d-1)}$. 
We can then fine-tune the constant $c_4$ in the definition of $\Gamma_{w_0,y_0}$ in such a way that 
\begin{equation*}
\IP_x[X_{H_{A_1\cup\partial A_2}}= w_0 ] 
\geq c\IP_x[X_{H_{A_1}}= w_0 ],
\end{equation*}
for some constant $c>0$. The same is valid for $w_0'$, so that 
\begin{equation*}
\frac{\sum_{x\in\hat D}\IP_w[X_{H_{\hat D\cup\partial A_2\cup A_1}}=x]\IP_x[X_{H_{A_1\cup\partial A_2}}=w_0]}{\sum_{x\in\hat D}\IP_w[X_{H_{\hat D\cup\partial A_2\cup A_1}}=x]\IP_x[X_{H_{A_1\cup\partial A_2}}=w_0 ']}
\geq
c\frac{\sum_{x\in\hat D}\IP_w[X_{H_{\hat D\cup\partial A_2\cup A_1}}=x]\IP_x[X_{H_{A_1}}=w_0]}{\sum_{x\in\hat D}\IP_w[X_{H_{\hat D\cup\partial A_2\cup A_1}}=x]\IP_x[X_{H_{A_1}}=w_0 ']}.
\end{equation*}

Using $(\ref{e_popovteixeira2})$ again we obtain
\begin{equation}
\inf_{w_0 ':||w_0' - w_0||}\frac{\IP_w\big{[} w\xrightarrow{1}w_0 \big{]}}{\IP_w\big{[} w\xrightarrow{1}w_0 ' \big{]}}\geq c_2 > 0.
\end{equation}
This fact together with the arguments 
presented above show the existence of a constant~$c>0$ such that $\alpha\geq c$, which concludes the proof of the uniform lower bound.
\section*{Acknowledgments}
Caio Alves was supported by FAPESP (grant 2013/24928-2). 
Serguei Popov was supported by CNPq (grant 300886/2008–0) and FAPESP (grant 2009/52379–8).

\bibliographystyle{plain}
\bibliography{./all}

\end{document}